\newtheorem{theo}{Theorem}[section]
\newtheorem{prop}[theo]{Proposition}
\newtheorem{claim}[theo]{Claim}
\newtheorem{lemm}[theo]{Lemma}
\newtheorem{coro}[theo]{Corollary}
\newtheorem{rema}[theo]{Remark}
\newtheorem{Defi}[theo]{Definition}
\newtheorem{ex}[theo]{Example}
\newtheorem{conj}[theo]{Conjecture}
\newtheorem{question}[theo]{Question}
\title{On Chern classes of   Lagrangian fibered hyper-K\"ahler manifolds}
\author{Claire Voisin\footnote{The author is supported by the ERC Synergy Grant HyperK (Grant agreement No. 854361).}}
\date{}
\newfont{\gothic}{eufb10}
\begin{document}
\maketitle
\begin{center} {\it \`{A} la m\'{e}moire de Jean-Pierre Demailly}
\end{center}

\begin{abstract} We study the rank stratification  for the differential of a Lagrangian fibration over a smooth basis. We also introduce and study the notion of Lagrangian morphism of vector bundles. As a consequence, we prove some of  the vanishing, in the Chow groups of a Lagrangian fibered hyper-K\"{a}hler variety $X$, of certain polynomials in the Chern classes of $X$ and the Lagrangian divisor,   predicted by the Beauville-Voisin conjecture. Under some natural assumptions on the dimensions  of the rank strata, we also establish  nonnegativity and positivity  results for  Chern classes.
 \end{abstract}
\section{Introduction}
In the paper \cite{beauvoi}, Beauville and myself proved that for any projective $K3$ surface $S$, there exists a canonical degree $1$ zero-cycle $o_S\in {\rm CH}_0(S)$ satisfying the following properties:

\begin{enumerate}
\item  \label{i} For any elements $D,\,D'\in {\rm CH}^1(S)$, $D\cdot D'={\rm deg}\,(D\cdot D') o_S$ in ${\rm CH}_0(S)$.
\item \label{ii}  $c_2(S)=24 o_S$ in ${\rm CH}_0(S)$.
\end{enumerate}
Although we can work with integral coefficients in the statement above,    ${\rm CH}(X)$ will denote from now on the Chow groups of a variety $X$ with $\mathbb{Q}$-coefficients.
Beauville subsequently proposed in \cite{beauspli} a  generalization   of property \ref{i} to any projective hyper-K\"{a}hler manifold $X$, whose weak version is called ``Beauville weak splitting conjecture", stating that any cycle $Z$ on $X$  which is a polynomial in divisor classes and is cohomologous to $0$ on $X$ is rationally equivalent to $0$ modulo torsion.   I proposed in turn  in \cite{voisinpamq} to generalize the combination of \ref{i} and \ref{ii}   to any projective hyper-K\"{a}hler manifold $X$ in the following form:
\begin{conj} \label{conjbeauvoi} Let $X$ be a hyper-K\"{a}hler manifold and let $Z\in{\rm CH}(X)$ be a cycle on $X$ which belongs to the subalgebra generated by divisor classes and the Chern classes of $X$. Then if $Z$ is cohomologous to $0$, $Z$ is rationally equivalent to $0$ modulo torsion on $X$.
\end{conj}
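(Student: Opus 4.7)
\medskip

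\noindent\textbf{Proof proposal.} The conjecture is open in general, so my plan is to treat the case when $X$ admits a Lagrangian fibration $\pi:X\to B$ and use this structure to constrain the Chern subalgebra. The key observation is that the symplectic form $\sigma\in H^0(X,\Omega_X^2)$ identifies $T_X\cong \Omega_X$, so the differential $d\pi:T_X\to \pi^*T_B$ is paired with its pull-back dual $\pi^*\Omega_B\to \Omega_X$; this equips $T_X$ with the structure of a Lagrangian morphism of vector bundles, in the sense introduced earlier in the paper, and this extra symmetry already forces cohomological relations among the $c_i(X)$ and the Lagrangian divisor $L=\pi^*\mathcal{O}_B(1)$.

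\medskip

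The program then has two steps. First, use Thom--Porteous--type formulas adapted to the symplectic-dual setting to express each polynomial in $c_\bullet(X)$ and $L$ predicted to vanish as a cycle supported on a rank-degeneracy stratum $X_k=\{x\in X\mid \mathrm{rk}(d\pi_x)\le k\}$; under the natural hypothesis that each $X_k$ has its expected dimension, these expressions are effective and of the correct codimension, yielding the required cohomological vanishings. Second, lift these identities to ${\rm CH}(X)$ by combining a Bloch--Srinivas-type decomposition of the diagonal --- any such cycle is supported on a proper subvariety, so it is controlled by cycles on the base (which is $\mathbb{P}^n$ by Hwang) and on the general fiber --- with the facts that general Lagrangian fibers are mutually rationally equivalent and that the Beauville--Voisin canonical zero-cycle $o_X$ collects everything at the bottom.

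\medskip

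The main obstacle is this final lift. Step 1 is essentially formal algebra of vector bundles, and the cohomological part of step 2 is dimension counting; but translating numerical vanishing into rational equivalence is exactly the content of the Beauville--Voisin conjecture itself, and never comes for free. I expect the argument to succeed only under the dimension hypotheses on the rank strata announced in the abstract --- giving the partial vanishings and Chern-class positivity claimed there --- and not for the full conjecture without genuinely new input.
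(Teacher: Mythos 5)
The statement you were asked about is a conjecture (Conjecture \ref{conjbeauvoi}); the paper does not prove it, and you are right to treat it as open. Your proposal is therefore best judged as a program for the partial results the paper actually establishes, and there the comparison is instructive. Your ``step 1'' --- exploiting the Lagrangian structure of $d\pi$, the rank strata $X_k$, and expected-dimension hypotheses --- is genuinely close to what the paper does in Sections \ref{seclagmor}--\ref{secranklocilagfib} (Lagrangian morphisms of vector bundles, Theorem \ref{propnewauckland}, Theorem \ref{theonewplustechass}). But the paper's unconditional vanishing $L^{n+1-i}c_{2j}(X)=0$ (Theorem \ref{theonew}) is obtained \emph{directly in the Chow group}, with no cohomological intermediate step: one takes a smooth complete intersection $\Sigma$ of $n-i+1$ sections of $f^*H$, notes that the image of $N^*_{\Sigma/X}\to \Omega_{X\mid\Sigma}$ is isotropic, forms the rank-$(2i-2)$ bundle $\mathcal{E}=({\rm Im}\,\phi)^\perp/{\rm Im}\,\phi$ so that $c_{2i}(\mathcal{E})=0$ in ${\rm CH}(\Sigma)$ for rank reasons, and concludes by Whitney's formula and induction. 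No Thom--Porteous formula, no effectivity, and no passage through cohomology is needed for that result.

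The genuine gap in your proposal is ``step 2.'' A Bloch--Srinivas-type decomposition of the diagonal does not convert the homological vanishing of a polynomial in $L$ and $c_\bullet(X)$ into rational equivalence: Bloch--Srinivas controls cycles of low codimension or small support via the structure of ${\rm CH}_0$, but here the cycles live in arbitrary codimension and the needed implication ``homologically trivial $\Rightarrow$ rationally trivial'' for the tautological subalgebra is precisely the content of Conjecture \ref{conjbeauvoi}; invoking the canonical zero-cycle $o_X$ does not supply it (and for general hyper-K\"ahler $X$ such a canonical cycle is itself conjectural). You in fact concede this in your last paragraph, which is the correct assessment: without the direct Chow-level bundle argument (or, for statements like $c_2^2L^{n-1}=0$, the torsion-point/normal-function arguments of Section \ref{secc2square} and the Riess-type correspondence argument of Section \ref{secriess}), the lift from cohomology to ${\rm CH}$ does not come from general decomposition techniques.
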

This conjecture  is known to hold for  Hilbert schemes of $K3$ surfaces by \cite{mauliknegut} and for generalized Kummer varieties by \cite{liefu}. It is also proved    in  \cite{voisinpamq}  for the Fano variety of lines in a smooth  cubic fourfold.
 Riess proved in \cite{riess} a very nice result concerning the weak splitting conjecture. We know  the polynomial cohomological  relations between divisor classes on a hyper-K\"{a}hler manifold $X$, as they were described by Verbitsky (see \cite{bogomolov}). When there is at least one divisor class $h=c_1(H)\in {\rm CH}^1(X)$ such that $\int_Xh^{2n}=0$, or equivalently $q(h)=0$, where $q$ is the Beauville-Bogomolov quadratic form,  the ideal of these  relations is generated by
  \begin{eqnarray}\label{relverb} l^{n+1}=0\,\,{\rm in}\,\,H^{2n+2}(X,\mathbb{Q}) \,\,{\rm if}\,\,q(l)=0.
  \end{eqnarray}
  (In the absence of a rational  isotropic class, the relations do not admit such a concrete description and only have a representation-theoretic characterization, unless we pass to real coefficients.)  Riess noticed that the relation (\ref{relverb}) obviously holds in ${\rm CH}^{n+1}(X)$ when $l=c_1(L)$ and $L$ is a Lagrangian line bundle, that is, there exist a Lagrangian fibration
  $f: X\rightarrow B$, an integer $d>0$,  and an ample line bundle $H_B$ on $B$, such that $L^{\otimes d}=f^* H_B$. Indeed, one has ${\rm dim}\,B=n$ so $H_B^{n+1}=0$ in ${\rm CH}^{n+1}(B)$ and
   \begin{eqnarray}\label{relverbchow}L^{n+1}=0\,\,\, {\rm in}\,\,\,{\rm CH}^{n+1}(X).
  \end{eqnarray}

   If an isotropic line bundle $L$ is  Lagrangian, $L$ is in particular  nef. The  SYZ conjecture states conversely that a nef isotropic line bundle on a hyper-K\"{a}hler manifold is Lagrangian.  When the Picard number of $X$  is $2$ and $X$ has an isotropic class, $X$ has two isotropic classes (up to a scalar), and  there are many instances when only one class is nef, the other ray of the positive cone not even belonging to the birational K\"{a}hler cone (see for example \cite{sacca}, where the case of the O'Grady 10-dimensional manifold constructed in \cite{lsv} is discussed; see also  \cite{DHMV}). We thus a priori do not have  the relation (\ref{relverbchow}) for the other  isotropic class. Riess, using work of Huybrechts \cite{huy}  on  the existence of  self-correspondences inducing  automorphisms of the ring ${\rm CH}(X)$,  could nevertheless  extend to the nonnef ray the relation (\ref{relverbchow}) and finally  prove the following:
  \begin{theo} (Riess \cite{riess})  If $X$ is a projective  hyper-K\"{a}hler manifold which has an isotropic class in ${\rm NS}(X)$ and  whose deformations  satisfy the  SYZ conjecture,  $X$ satisfies Beauville's weak splitting conjecture.
  \end{theo}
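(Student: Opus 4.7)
The plan is to reduce the weak splitting conjecture to verifying the Verbitsky--Bogomolov relations already at the level of Chow, and then to handle each isotropic ray in $\mathrm{NS}(X)_{\mathbb{Q}}$ separately according to whether it is nef or not. Since $X$ carries an isotropic class in $\mathrm{NS}(X)$, the ideal of cohomological polynomial relations among divisor classes is (by the result of Verbitsky--Bogomolov recalled in (\ref{relverb})) generated by the classes $l^{n+1}$ for $l$ isotropic. So the conjecture for $X$ is equivalent to establishing (\ref{relverbchow}) in $\mathrm{CH}^{n+1}(X)$ for every isotropic class $l\in\mathrm{NS}(X)_{\mathbb{Q}}$, and this is what I would aim to prove.

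For a nef isotropic class $l=c_1(L)$, I would apply the SYZ hypothesis directly to $X$: it produces an integer $d>0$, a Lagrangian fibration $f:X\to B$ with $\dim B=n$, and an ample $H_B$ on $B$ with $L^{\otimes d}=f^*H_B$. Since $H_B^{n+1}=0$ in $\mathrm{CH}^{n+1}(B)$ for dimension reasons, pulling back gives $L^{n+1}=0$ in $\mathrm{CH}^{n+1}(X)$, which is exactly the relation (\ref{relverbchow}) for $l$ (up to a nonzero rational scalar, which is harmless since we work modulo torsion with $\mathbb{Q}$-coefficients).

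The second and harder case is an isotropic class $l\in\mathrm{NS}(X)_{\mathbb{Q}}$ whose ray is not nef. Here I would use the deformation part of the hypothesis. The idea is to choose a smooth family $\pi:\mathcal{X}\to T$ of hyper-K\"ahler deformations of $X$ and an extension of $l$ as a locally constant section $\tilde l$ of $R^2\pi_*\mathbb{Q}$ that stays algebraic along $T$, in such a way that on some other fiber $\mathcal{X}_t$ the class $\tilde l_t$ lies in the nef cone. On $\mathcal{X}_t$, the first case applies, so $\tilde l_t^{n+1}=0$ in $\mathrm{CH}^{n+1}(\mathcal{X}_t)$. The problem then becomes how to transport a Chow-theoretic relation from $\mathcal{X}_t$ back to $X$. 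This is where Huybrechts' construction enters: it provides, by monodromy/parallel transport along $T$, self-correspondences $\Gamma\in\mathrm{CH}(X\times X)$ acting as ring automorphisms of $\mathrm{CH}(X)$ and realizing on $\mathrm{NS}(X)_{\mathbb{Q}}$ the cohomological operations coming from the deformation. Applying such a $\Gamma$ to the known relation (for a nef isotropic class $l_1$) produces the desired relation $l^{n+1}=0$ in $\mathrm{CH}^{n+1}(X)$.

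The main obstacle, as I see it, is the third paragraph: lifting cohomological parallel transport to Chow. Cohomologically, the fact that any isotropic class in $\mathrm{NS}(X)_{\mathbb{Q}}$ can be moved into the nef cone of some deformation is a statement about the monodromy action on the Beauville--Bogomolov lattice and is relatively standard. The delicate point is that one needs an actual algebraic cycle on $X\times X$ inducing a ring automorphism of $\mathrm{CH}(X)$ that realizes the required lattice automorphism on divisor classes; this is exactly what Huybrechts' result in \cite{huy} supplies, but its use is the nontrivial heart of Riess's argument. Once this transfer mechanism is available, combining it with the SYZ-based identification of nef isotropic classes with Lagrangian classes closes the proof.
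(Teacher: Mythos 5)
Your proposal follows essentially the same route as the paper's (i.e.\ Riess's) argument: reduce the weak splitting property to the Chow-theoretic relations $l^{n+1}=0$ for all isotropic $l\in{\rm NS}(X)_{\mathbb{Q}}$ via the Verbitsky--Bogomolov description of the ideal of relations, obtain these for nef isotropic classes from the SYZ hypothesis and the resulting Lagrangian fibration, and transport them to the non-nef isotropic classes using the Huybrechts--Riess correspondences, which act as ring isomorphisms on Chow groups (this is exactly Theorem \ref{theorefriess} and the argument sketched in Section \ref{secriess}). The only cosmetic difference is that the transporting cycle is in general a correspondence $\Gamma\in{\rm CH}^{2n}(X\times X')$ to a birational model $X'$, obtained as a limit of graphs of isomorphisms between deformations, rather than literally a self-correspondence of $X$; this does not affect the argument.
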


  One of  the  purposes in this paper is to study  Conjecture \ref{conjbeauvoi} for a certain type of  polynomial cohomological relations involving both  Chern classes of $X$ and divisor classes.
  More precisely, as proved in \cite{huy}, the following generalized Verbitsky relations hold in $H^*(X,\mathbb{Q})$ for $X$ hyper-K\"{a}hler of dimension $2n$
  \begin{eqnarray}\label{relverbchern} l^{n+1-i}c_I=0\,\,{\rm in}\,\,H^{2n+2-2i+4j}(X,\mathbb{Q}) \,\,{\rm if}\,\,q(l)=0 \,\,{\rm and}\,\,\,\,{\rm deg}\,c_I=4j\geq 4i,
  \end{eqnarray}
  where $c_I$ is any polynomial in the topological Chern classes $c_{2k}(X)\in H^{4k}(X,\mathbb{Q})$ of $X$ and the degree is the weighted cohomological degree. The relations (\ref{relverbchern}) hold in the cohomology algebra of $X$ because they hold for the class $\sigma_t$ of any $(2,0)$-form on a deformation $X_t$ of $X$, and these classes $\sigma_t$ fill-in an Euclidean  open (hence Zariski dense) set in the quadric $q=0$ in $H^2(X,\mathbb{C})$. Indeed $c_I$ is of Hodge type $(2j,2j)$ on $X_t$, while $\sigma^{n+1-i}$ is of Hodge type $(2(n+1-i),0)$ on $X_t$.
  \begin{rema}{\rm The odd Chern classes of a hyper-K\"{a}hler manifold $X$ vanish in ${\rm CH}(X)$ since its cotangent bundle is isomorphic to its dual. This is why we discuss only even Chern classes.}
  \end{rema}
\begin{rema}{\rm The relations (\ref{relverbchern}) do not  exhaust the cohomological relations in the tautological ring generated by divisor classes and topological Chern classes. For example, in top degree $4n$, the cohomology ring has rank $1$ and thus all polynomials of weighted degree $4n$ in the topological Chern classes $c_2(X),\ldots, c_{2n}(X)$  generate only a $1$-dimensional vector space. In degree $4n$, there are thus plenty of polynomial cohomological relations involving only the $c_{2i}(X)$, while the relations in (\ref{relverbchern}) not involving $l$ appear only for $i=n+1$, that is, in degree $4n+4$, (so they trivially hold in this case).}
\end{rema}

Conjecture \ref{conjbeauvoi} combined with (\ref{relverbchern})  leads us to the following
\begin{conj} \label{conjvan} Let $X$  be a    projective hyper-K\"{a}hler manifold. Then  for any line bundle $L$ on $X$ with $q(L)=0$, for   any integer $j\geq i$, and  any Chern monomial $c_I\in {\rm CH}^{2j}(X)$
 \begin{eqnarray}\label{relverbchowchernpredict} L^{n-i+1}c_I=0 \,\,\, {\rm in}\,\,\,{\rm CH}(X).
  \end{eqnarray}
  \end{conj}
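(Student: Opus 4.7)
The plan is to reduce to the case of a Lagrangian fibration and then to exploit the resulting pullback structure of $c(T_X)$. Following the strategy of Riess recalled above, I would first invoke Huybrechts' self-correspondences on ${\rm CH}(X)$ to transport the desired relation from the nef isotropic ray to the possibly non-nef one. This reduces the statement to the case where $L$ is itself Lagrangian: there exist an integer $d>0$, a smooth base $B$ of dimension $n$, an ample line bundle $H_B$ on $B$, and a Lagrangian fibration $f:X\to B$ with $d\,L=f^*H_B$ in ${\rm CH}^1(X)$.

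Over the smooth locus $U\subset X$ of $f$, the symplectic form on $X$ identifies the relative tangent bundle $T_{X/B}|_U$ with $f^*\Omega_B|_U$, producing the short exact sequence
\begin{equation*}
0\to f^*\Omega_B|_U\to T_X|_U\to f^*T_B|_U\to 0.
\end{equation*}
Hence $c(T_X)|_U=f^*\bigl(c(\Omega_B)\,c(T_B)\bigr)|_U$, and any Chern monomial $c_I$ of weighted degree $2j$ in the $c_{2k}(X)$ satisfies $c_I|_U=f^*\gamma_I|_U$ for some $\gamma_I\in{\rm CH}^{2j}(B)$. By the localization sequence one can then write $c_I=f^*\gamma_I+\iota_*\beta_I$, where $\iota:D\hookrightarrow X$ is the inclusion of the locus $D=X\setminus U$ where $f$ is not smooth, and
\begin{equation*}
L^{n-i+1}\,c_I=\frac{1}{d^{n-i+1}}\,f^*\bigl(H_B^{n-i+1}\gamma_I\bigr)+L^{n-i+1}\,\iota_*\beta_I.
\end{equation*}
The first summand is the $f^*$-pullback of a class of codimension $n-i+1+2j$ on $B$; since $j\geq i$ yields $n-i+1+2j>n=\dim B$, this term vanishes in ${\rm CH}(X)$.

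It remains to show $L^{n-i+1}\iota_*\beta_I=0$. This is where the rank stratification of $df$ and the notion of Lagrangian morphism of vector bundles introduced in the body of the paper become essential. One filters $X$ by the loci $X_r=\{x:{\rm rk}\,df_x\leq r\}$, each of which maps to a subvariety of $B$ of dimension at most $r$, and the symplectic form equips the restricted differential on each stratum with an extra pairing that should, stratum by stratum and by an argument analogous to the one above, allow one to write $\beta_I$ as a sum of pushforwards of pullbacks $f_r^*\delta_r$, where $f_r:X_r\to f(X_r)$ is the restriction of $f$. The corresponding contribution $L^{n-i+1}(\iota_r)_*f_r^*\delta_r$ is then annihilated by the low dimension of $f(X_r)$, provided one has accumulated enough additional factors of $f^*H_B=d\,L$ at each drop of rank. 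The quantitative control of this accumulation is the main obstacle: one must verify that every descent into a deeper rank stratum really furnishes enough extra powers of $L$ to compensate for the corresponding decrease of $\dim f(X_r)$. This is precisely the degeneracy analysis for Lagrangian morphisms carried out in the paper, and it is where the ``natural assumptions on the dimensions of the rank strata'' mentioned in the abstract appear to be required; in full generality this is why the statement remains a conjecture rather than a theorem.
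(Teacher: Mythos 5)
The statement you are trying to prove is a conjecture in the paper: the paper itself only establishes special cases (single Chern classes $c_{2j}$ for a Lagrangian $L$ in Theorem \ref{theonew}, the extension to isotropic classes under the SYZ hypothesis in Theorem \ref{theoriess}, and $c_2^2L^4$ for LSV manifolds in Theorem \ref{theoLSV}), so there is no complete proof to compare yours against, and your sketch does not close the conjecture either. Two gaps deserve to be named precisely. First, your opening reduction is not unconditional: Riess's correspondence does transport the problem to a nef isotropic class $L'$, but to get from there to an actual Lagrangian fibration you must assume the SYZ conjecture for the deformation class of $X$ (this is exactly the hypothesis of Theorem \ref{theoriess}), and the smoothness of the base $B$ is itself conjectural; neither assumption appears in the statement of Conjecture \ref{conjvan}.

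Second, and more seriously, the step you flag as ``quantitative control'' is not merely unverified — it fails to be formal. Your localization argument handles the pullback part correctly, but the residual term $L^{n-i+1}\iota_*\beta_I$ cannot in general be killed by accumulating powers of $L$ along the rank strata. The point is visible already for the monomial $c_I=c_2^{\,j}$: each factor $c_2(X)$ is only supported over a divisor in $B$ (this is the $i=1$ case), and the Chow-theoretic support of a product of classes each supported over a divisor does not improve to codimension $j$; your scheme therefore yields nothing beyond $L^n c_I=0$ for such monomials. The paper states this explicitly: unlike (\ref{relchernclassverbchow}), the vanishing $L^{n-1}c_2^2(X)=0$ ``does not follow from formal arguments involving the Lagrangian fibration,'' and its proof in Theorem \ref{theoLSV} uses an entirely non-formal input — triviality of $c_2(\mathcal{E})$ on the torus leaves via torsion points of a normal function, established through the density theorem of Andr\'e--Corvaja--Zannier and hypotheses \ref{eqitem114mai}--\ref{eqitem314mai} of Proposition \ref{pronew14maicrit}, verified only for LSV manifolds. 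Even with the dimension hypotheses on the rank strata that the paper assumes in Theorem \ref{theonewplustechass}, the conclusion there concerns single Chern classes $c_{2j}(X)$, not monomials, so the stratification machinery you invoke is structurally insufficient for the full conjecture, which is why it remains open.
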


  When   $L$ is   a  Lagrangian line bundle, the vanishing (\ref{relverbchowchernpredict}) is the vanishing (\ref{relverbchow}) for  $i=0$. The next case where $i=1$ is also quite easy. Indeed, we have to prove that
 \begin{eqnarray}\label{relverbchowchern1} L^nc_I=0 \,\,\, {\rm in}\,\,\,{\rm CH}^{n+2i}(X)
  \end{eqnarray}
  for any Chern polynomial of degree $2i>0$ on $X$. This follows from the fact that $L^n$ is proportional to the class of a general fiber $X_b$ of the Lagrangian fibration $f:X\rightarrow B$ and from the exact sequence
  \begin{eqnarray}\label{exseq} 0\rightarrow f_b^*\Omega_{B,b}\rightarrow \Omega_{X\mid X_b}\rightarrow \Omega_{X_b} \rightarrow 0,\end{eqnarray}
  where the vector bundles on the right and on the left are trivial on $X_b$ which is an abelian variety. This implies that $c_i(\Omega_{X})_{\mid X_b}=0$ in ${\rm CH}^i(X_b)$ for $i>0$.

  Note that we have more generally an exact sequence
  $$0\rightarrow f^*\Omega_{B^0}\rightarrow \Omega_{X^0}\rightarrow f^*T_{B^0}\rightarrow 0$$
  on $X^0:=f^{-1}(B^0)$, where $B^0\subset B$ is the Zariski open set where $B$ is smooth and over which $f$ has maximal rank.
  It follows that the Chern classes  of positive degree of $X$ are supported over a proper closed algebraic subset of $B$. A  strengthened conjecture in the case of a Lagrangian fibration is the following.
   \begin{conj}\label{conjvanop} Assume $X$ is projective  hyper-K\"{a}hler and  $f:X\rightarrow B$ is a  Lagrangian fibration. Then for any $ i$ and any  monomial $c_I\in {\rm CH}^{2i}(X)$ of weighted degree $2i$  in the Chern classes of $X$, there exists a codimension $i$ closed algebraic subset $B^i\subset B$, such that $c_I$ vanishes in ${\rm CH}(X\setminus f^{-1}(B^i))$.\end{conj}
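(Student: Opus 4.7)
The natural starting point is the exact sequence on $X^0=f^{-1}(B^0)$ recalled just before the conjecture. Taking total Chern classes gives $c(\Omega_{X^0})=f^*\prod_j(1-\alpha_j^2 t^2)$, where the $\alpha_j$ are Chern roots of $\Omega_B$, so every $c_{2k}(X)_{\mid X^0}$ is the pullback of a universal polynomial $\gamma_k\in {\rm CH}^{2k}(B^0)$ in the Chern classes of $B$. Hence any Chern monomial $c_I$ of weighted degree $2i$ satisfies $c_I{}_{\mid X^0}=f^*\eta_I$ for some $\eta_I\in {\rm CH}^{2i}(B^0)$. Since $B^0$ has dimension $n$, every representative of $\eta_I$ is carried by a closed subset $Z^0\subset B^0$ of codimension $\geq 2i\geq i$, and taking its closure $\overline{Z}\subset B$ one already gets that $c_I$ vanishes over $X^0\setminus f^{-1}(\overline{Z})$.

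The main obstacle is controlling $c_I$ over the discriminant locus $B\setminus B^0$: generically this has codimension one in $B$, so merely adjoining it to $\overline{Z}$ would produce a codimension-$1$ subset, not a codimension-$i$ one. The plan is to argue by induction on the rank stratification of the Lagrangian morphism $df\colon T_X\rightarrow f^*T_B$ studied earlier in the paper. Let $B_k\subset B$ denote the locally closed stratum on which $df$ has corank exactly $k$ on the fiber and set $X_k=f^{-1}(B_k)$. The Lagrangian-morphism formalism should yield, on the one hand, codimension estimates of the form ${\rm codim}_B(B_k)\geq k$ and, on the other hand, a refined short exact sequence of vector bundles on each smooth $X_k$ expressing $\Omega_{X\mid X_k}$ as an extension of an $f^*$-part by the kernel of $df_{\mid X_k}$. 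Taking Chern classes in this refined sequence should show that $c_I{}_{\mid X_k}$ is a pullback from $B_k$ modulo classes supported on strata of larger corank.

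One then assembles $B^i$ as $\overline{Z}\cup\bigcup_{k\geq 1} W_k$, where $W_k\subset B_k$ is the support of the residue cycle produced on the $k$-th stratum; by the rank-codimension estimate each $W_k$ has codimension $\geq i$ in $B$, as $W_k$ is codimension $\geq 2(i-k)$ inside the codimension-$\geq k$ stratum $B_k$ and $2(i-k)+k\geq i$ for $k\leq i$, while for $k\geq i$ the estimate is automatic. The hard part of the argument is precisely this inductive step: producing the correct exact sequence on each rank stratum, verifying the sharp codimension bound ${\rm codim}\,B_k\geq k$ from the Lagrangian condition, and checking that the difference between $c_I{}_{\mid X_k}$ and the naive pulled-back representative is effectively supported on the deeper strata rather than producing uncontrolled codimension-$1$ corrections. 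This is where the \emph{Lagrangian morphism of vector bundles} machinery announced in the abstract should enter the proof in an essential way; without it, one only recovers the easy case $i\leq 1$ already treated in the introduction.
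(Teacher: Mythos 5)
The statement you are proving is stated in the paper as a \emph{conjecture}, not a theorem: the paper only establishes it under additional hypotheses (Theorem \ref{theonewplustechass}, proved via Theorem \ref{theonewplustechassavec*}), namely that $B$ is smooth in codimension $i-1$ and that the rank strata satisfy the dimension condition ($*_k$) for $k\geq n-i+1$ (for every component $W$ of $X_k$, either ${\rm dim}\,W\leq 2k$ or ${\rm dim}\,f(W)= k$). Your first step (Chern classes are pulled back from $B^0$ over the maximal-rank locus, hence supported in codimension $2i$ there) is fine and is essentially the observation made in the introduction, but everything after that is a plan rather than a proof, and the plan founders exactly where the open problem sits. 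Your inductive step asserts two things that are not established and cannot be extracted from the Lagrangian-morphism formalism alone: (a) that on each corank-$k$ stratum the restriction of a Chern monomial is a pullback from $B_k$ \emph{modulo classes supported on deeper strata}, and (b) that the resulting residue cycle $W_k$ has codimension $\geq 2(i-k)$ inside $B_k$. On the corank-$k$ stratum the relevant quotient bundle $\mathcal{E}=({\rm Im}\,\phi)^{\perp}/{\rm Im}\,\phi$ has rank $2k$, so only its Chern classes in degree $>2k$ vanish for free; nothing forces the lower-degree contributions to a monomial $c_I$ to be pullbacks or to be supported in the codimension you need, and this is precisely why the paper has to pass to the Grassmannian bundle $\mathcal{G}_{i-1}$, use the tautological subbundle $\mathcal{S}$, and invoke Lemma \ref{leimplications}, whose hypothesis is condition ($*_k$).

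Moreover, the paper explicitly warns that the ``expected'' codimension estimates coming from the universal Lagrangian matrix locus (Lemma \ref{lewarmrank}) \emph{fail} for the differential of a Lagrangian fibration from corank $2$ on (Remark \ref{remageoranksup} and Section \ref{secranklocilagfib}), so the machinery you appeal to does not by itself give the stratum-by-stratum control you need; the dimension statements you would require are essentially the content of the open Question \ref{questionnewauckland}. Finally, even granting the stratification argument, your assembly implicitly assumes $B$ is smooth (or smooth enough) along the relevant strata, whereas in general only normality of $B$ is known, which is why the paper's theorem carries the hypothesis that $B$ is smooth in codimension $i-1$ and why only the cases $i=1,2$ are obtained unconditionally (Corollary \ref{coroideux}). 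In short, what you have written reproduces the strategy of Section \ref{secsupport} in outline, but the ``hard part'' you defer is exactly the part that is open; as it stands this is not a proof of the conjecture.
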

   This conjecture obviously implies Conjecture \ref{conjvan} in the case where $L$ is Lagrangian, since then  $L^{n-i+1}$ is supported on $f^{-1}(B_{i-1})$, for a closed algebraic subset $B_{i-1}$ of $B$ of dimension $i-1$ and in general position.

  Our first results  in this paper concern   the study of the relations (\ref{relverbchowchernpredict}.  We first prove
   \begin{theo} \label{theonew}  Let $ f:X\rightarrow B$ be a Lagrangian fibration of a projective hyper-K\"{a}hler manifold, with Lagrangian line bundle $L$.
Then for  any pair of integers $i,\,j\leq n,\,j\geq i$, we have
   \begin{eqnarray} \label{relchernclassverbchow} L^{n-i+1} c_{2j}(X)=0\,\,{\rm in}\,\,{\rm CH}(X).\end{eqnarray}
  \end{theo}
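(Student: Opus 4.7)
The idea is to split $c_{2j}(X)$ into a part pulled back from $B$ and a part supported over the discriminant. Let $B^0\subset B$ be the open locus where $f$ is a submersion onto a smooth point of $B$, set $X^0:=f^{-1}(B^0)$ and $D:=B\setminus B^0$. The exact sequence from the introduction,
\[
0\to f^*\Omega_{B^0}\to \Omega_{X^0}\to f^*T_{B^0}\to 0,
\]
gives $c(\Omega_{X^0})=f^*\bigl(c(\Omega_{B^0})\cdot c(T_{B^0})\bigr)$, so that $c_{2j}(X)|_{X^0}=f^*\alpha_{2j}$ for some explicit $\alpha_{2j}\in{\rm CH}^{2j}(B^0)$.

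Lifting $\alpha_{2j}$ to a class $\alpha\in{\rm CH}^{2j}(B)$ via the right-exact localization sequence on $B$, the localization sequence on $X$ yields a decomposition
\[
c_{2j}(X)=f^*\alpha+\gamma,\qquad \gamma\in{\rm CH}^{2j}(X)\ \text{supported on}\ f^{-1}(D).
\]
Using $dL=f^*h_B$ with $h_B\in{\rm CH}^1(B)$ ample, the first summand satisfies
\[
L^{n-i+1}\cdot f^*\alpha\;=\;\frac{1}{d^{n-i+1}}\,f^*\bigl(h_B^{n-i+1}\cdot\alpha\bigr)\;=\;0,
\]
because $h_B^{n-i+1}\cdot\alpha\in{\rm CH}^{n-i+1+2j}(B)$ has codimension at least $n+i+1>n=\dim B$ in $B$, under the assumption $j\geq i$; here I use only that classes on $B$ of codimension exceeding $\dim B$ vanish.

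It remains to prove $L^{n-i+1}\gamma=0$, and this is where the main work lies. The plan is to invoke the Lagrangian rank-stratification analysis of the paper in order to represent $\gamma$ as the pushforward $\gamma=\iota_*\gamma'$ of a class $\gamma'\in{\rm CH}(f^{-1}(B_j))$, where $\iota\colon f^{-1}(B_j)\hookrightarrow X$ is the closed immersion of the preimage of a closed subset $B_j\subset B$ of codimension at least $j$. Granted such a representation, the projection formula yields
\[
L^{n-i+1}\gamma\;=\;\iota_*\bigl((\iota^*L)^{n-i+1}\cdot\gamma'\bigr)\;=\;\frac{1}{d^{n-i+1}}\,\iota_*\bigl((f|_{f^{-1}(B_j)})^*(h_B|_{B_j})^{n-i+1}\cdot\gamma'\bigr),
\]
and $(h_B|_{B_j})^{n-i+1}\in{\rm CH}^{n-i+1}(B_j)$ vanishes for dimension reasons, since $\dim B_j\leq n-j\leq n-i<n-i+1$.

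The principal obstacle is therefore the structural statement producing $\gamma$ as a pushforward from a codimension-$j$ closed subscheme of the form $f^{-1}(B_j)$. This is exactly what the study of the rank stratification and the notion of Lagrangian morphism of vector bundles developed in the body of the paper should deliver: the Lagrangian condition on $df\colon f^*T_B\to T_X$ (or, equivalently, on its dual on forms) forces the corank-$k$ degeneracy loci to have codimension strictly larger than the Porteous estimate for a general morphism of vector bundles of the same ranks, and this refined codimension estimate is what makes it possible to put $\gamma$ over a subset of the required codimension in $B$. The dimension count above is then purely formal.
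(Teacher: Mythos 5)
Your treatment of the part of $c_{2j}(X)$ pulled back from the base is fine (and note that it already suffices for $i=1$, since there a codimension~$1$ support for $\gamma$ is enough; this is essentially the easy case discussed in the introduction of the paper). The genuine gap is the step you yourself flag as the ``principal obstacle'': representing $\gamma$ as a pushforward from $f^{-1}(B_j)$ with ${\rm codim}\,B_j\geq j$ (or even $\geq i\geq 2$). That statement is precisely Conjecture \ref{conjvanop}, which the paper does \emph{not} prove unconditionally: it is established only under extra hypotheses on the rank strata (Theorems \ref{theonewplustechass} and \ref{theonewplustechassavec*}, which require $B$ smooth in codimension $i-1$ and the condition ($*_k$) for $k\geq n-i+1$; Corollary \ref{coroideux} handles only $i\leq 2$). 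The geometric input your plan needs --- control of ${\rm dim}\,f(X_k)$ from below, equivalently that the rank strata map onto subsets of $B$ of the right dimension --- is exactly Question \ref{questionnewauckland}, left open in the paper. Moreover, the mechanism you invoke is stated backwards: the Lagrangian condition makes degeneracy loci have \emph{smaller} codimension than the Porteous estimate for a general morphism of bundles of ranks $n$ and $2n$ (Lemma \ref{lewarmrank} gives codimension $\tfrac{3k^2+k}{2}$ in $M_{\rm lag}$ versus $k(n+k)$), and for an actual Lagrangian fibration the rank loci fail even that estimate from corank $2$ on (Remark \ref{remageoranksup}); so no refined codimension bound of the kind you describe is available, and your argument does not close for $i\geq 2$.

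For comparison, the paper's proof of Theorem \ref{theonew} avoids any support statement over high-codimension subsets of $B$. It restricts to a smooth complete intersection $\Sigma\subset X$ of $n-i+1$ general members of $|f^*H|$, observes that the image of $N^*_{\Sigma/X}\rightarrow \Omega_{X\mid\Sigma}$ is isotropic because $f$ is Lagrangian, and forms the rank $2i-2$ bundle $\mathcal{E}=({\rm Im}\,\phi)^{\perp}/{\rm Im}\,\phi$, so that $c_{2i}(\mathcal{E})=0$ on $\Sigma$; the Whitney formula then expresses $c_{2i}(\Omega_X)_{\mid\Sigma}$ in terms of $L$ and lower even Chern classes, and pushing forward (the class of $\Sigma$ being a multiple of $L^{n-i+1}$) one concludes by induction on $i$. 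If you want to salvage your route, you would have to either prove Conjecture \ref{conjvanop} (or the condition ($*_k$) it rests on), or switch to an argument of this complete-intersection type that uses the isotropy of ${\rm Im}\,\phi$ directly rather than codimension estimates for rank loci.
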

  This proves   Conjecture \ref{conjvan}  only for the Chern classes, not for monomials in the Chern classes.
   We will establish Theorem \ref{theonew} in Section \ref{sectheonew}. We  will discuss   the case of $c_2^2L^{n-1}$ in Section \ref{secc2square}. As we will see, unlike (\ref{relchernclassverbchow}),  the vanishing
     \begin{eqnarray} \label{relchernclassverbchowavecsquare}L^{n-1}c_2^2(X)=0\,\,{\rm in}\,\,{\rm CH}(X)\end{eqnarray}
   predicted by Conjecture \ref{conjvan} does not follow from formal arguments involving the Lagrangian fibration. We will establish  this vanishing in Section \ref{secc2square} for a  specific class of Lagrangian fibered hyper-K\"{a}hler manifolds, namely the deformations of the $10$-dimensional  O'Grady manifolds constructed in \cite{lsv} as compactifications of the intermediate Jacobian fibration associated with a cubic fourfold (which we will call below a LSV-manifold).
   \begin{theo} \label{theoLSV} Let $X\rightarrow \mathbb{P}^5$ be a LSV-manifold. Then $c_2^2L^4=0$ in ${\rm CH}^8(X)$.
   \end{theo}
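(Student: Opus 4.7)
I would push the calculation down to the preimage $X_\ell:=f^{-1}(\ell)$ of a generic line $\ell\subset\mathbb{P}^5$, where the restricted powers of $L$ collapse drastically. For the LSV fibration, $L=f^*\mathcal{O}_{\mathbb{P}^5}(1)$ and $f$ is flat with $5$-dimensional fibers, so $L^4=f^*[\ell]=[X_\ell]$ in $\mathrm{CH}^4(X)$. By the projection formula applied to $j_\ell:X_\ell\hookrightarrow X$,
\[
L^4\cdot c_2(X)^2=(j_\ell)_*\bigl(c_2(X)_{|X_\ell}\bigr)^{2}\in\mathrm{CH}^8(X),
\]
and it suffices to prove that $(c_2(X)_{|X_\ell})^2=0$ in $\mathrm{CH}^4(X_\ell)$.

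On the submersion locus $X^0$, the Lagrangian identification $T_{X/\mathbb{P}^5}\cong f^*\Omega_{\mathbb{P}^5}$ yields the exact sequence $0\to f^*\Omega_{\mathbb{P}^5}\to\Omega_X\to f^*T_{\mathbb{P}^5}\to 0$; the Whitney formula in Chow then gives $c(\Omega_X)_{|X^0}=f^*\bigl((1-h^2)^6\bigr)$, which modulo $h^6=0$ equals $1-6h^2+15h^4$. Hence $c_2(X)_{|X^0}=-6L^2_{|X^0}$. But $L_{|X_\ell}=(f_{|X_\ell})^*\mathcal{O}_\ell(1)$, so $L^2_{|X_\ell}$ lies in $(f_{|X_\ell})^*\mathrm{CH}^2(\ell)=0$, and therefore $c_2(X)_{|X^0\cap X_\ell}=0$. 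The complement $X_\ell\setminus X^0$ is the disjoint union of the singular fibers $X_p$ for the finitely many $p\in\ell\cap\Delta$ (with $\Delta\subset\mathbb{P}^5$ the discriminant hypersurface), each Cartier in $X_\ell$ by flatness of $X_\ell\to\ell$. The Chow localization sequence then writes
\[
c_2(X)_{|X_\ell}=\sum_{p\in\ell\cap\Delta}(i_p)_*\delta_p,\qquad\delta_p\in\mathrm{CH}^1(X_p),\quad i_p:X_p\hookrightarrow X_\ell.
\]

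Squaring this expression, the cross terms for distinct $p,p'$ vanish because the fibers are disjoint, so $(i_p)_*\delta_p\cdot(i_{p'})_*\delta_{p'}=0$. For each diagonal term, the self-intersection formula for the Cartier divisor $i_p$ gives
\[
(i_p)_*\delta_p\cdot(i_p)_*\delta_p=(i_p)_*\bigl(\delta_p\cdot c_1(N_{X_p/X_\ell})\cap\delta_p\bigr);
\]
but $N_{X_p/X_\ell}=(f_{|X_p})^*T_{\ell,p}$ is the pullback of a trivial one-dimensional vector space, hence a trivial line bundle, so $c_1(N_{X_p/X_\ell})\cap\delta_p=0$ and the entire term vanishes (crucially, this avoids any need to interpret $\delta_p^2$ on the possibly singular fiber $X_p$). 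Consequently $(c_2(X)_{|X_\ell})^2=0$, and pushing forward by $(j_\ell)_*$ gives $L^4c_2(X)^2=0$ in $\mathrm{CH}^8(X)$.

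\textbf{The main obstacle.} Two inputs specific to the LSV setting are essential, and explain why the vanishing is not purely formal as noted in the introduction. First, one genuinely uses $L=f^*\mathcal{O}_{\mathbb{P}^5}(1)$ itself, not merely $L^d=f^*H_B$ for some $d>1$: otherwise $L^2_{|X_\ell}$ need not vanish after restriction to a one-parameter subfamily. Second, one uses flatness of $f$ so that the singular fibers $X_p$ appear as Cartier divisors in $X_\ell$ with trivial normal bundle. The delicate technical point is then verifying that the localization decomposition is well-posed across any singularities of $\Delta$ and of the individual fibers $X_p$; once that is granted, the argument is formal intersection theory.
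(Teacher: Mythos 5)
Your reduction to $X_\ell=f^{-1}(\ell)$ and the localization $c_2(X)_{|X_\ell}=\sum_p(i_p)_*\delta_p$ are fine (and parallel the paper's first step of restricting to $X_C$ over a curve), but the argument breaks at the diagonal terms. The identity you invoke, $(i_p)_*\delta_p\cdot(i_p)_*\delta_p=(i_p)_*\bigl(\delta_p\cdot(c_1(N_{X_p/X_\ell})\cap\delta_p)\bigr)$, is the projection formula combined with the self-intersection formula $i_p^*i_{p*}\delta_p=c_1(N)\cap\delta_p$, and it presupposes an intersection product on $X_p$ itself; it is valid when $X_p$ is smooth, but the fibers over $\ell\cap\Delta$ are precisely the singular ones. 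For singular $X_p$ the product $(i_p)_*\delta_p\cdot(i_p)_*\delta_p$ is computed by Fulton's refined class $\delta_p\cdot_{X_\ell}\delta_p\in{\rm CH}_2(X_p)$, and this does \emph{not} vanish merely because $\mathcal{O}_{X_\ell}(X_p)_{|X_p}$ is trivial: triviality of the normal bundle only kills products against the full divisor class $[X_p]$, not products of arbitrary classes supported on $X_p$. A minimal counterexample to the implication you use: an elliptic surface $Y\rightarrow\mathbb{P}^1$ with an $I_2$ fiber $F=C_1+C_2$ has $N_{F/Y}\cong\mathcal{O}_F$, yet $(i_*[C_1])^2=C_1^2=-2\neq0$; similarly, in a Lefschetz pencil of quadric threefolds the two planes through the node of a singular fiber have nonzero products although they lie on a fiber with trivial normal bundle. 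So the sentence ``this avoids any need to interpret $\delta_p^2$ on the possibly singular fiber'' is exactly where the gap sits: the refined self-intersection on the singular fiber is the whole content of the problem, not a term you may discard.

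This is consistent with the paper's explicit warning that $L^{n-1}c_2^2=0$ does not follow from formal arguments about the Lagrangian fibration (your proof, if correct, would apply verbatim to any Lagrangian fibration with $L$ pulled back from $\mathcal{O}_B(1)$). What the paper actually does is compute the contribution supported on the singular fibers: it identifies $L^{n-1}c_2^2$ (up to a factor) with $j_{Z*}(c_2(\mathcal{E})_{|Z})$, where $Z$ is the union of the singular loci of the fibers over the curve, a union of abelian varieties on which the rank $2$ symplectic bundle $\mathcal{E}$ is homogeneous, typically of the form $M_i\oplus M_i^{-1}$ with $M_i$ non-torsion; the vanishing then requires non-formal inputs special to the LSV fibration -- density of torsion values of the resulting normal function (Andr\'e--Corvaja--Zannier, checked via Lossen's theorem on Hessians), together with ${\rm CH}_0$-triviality of the discriminant (the dual hypersurface of the cubic fourfold) to propagate the vanishing from the torsion points to all points. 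To repair your approach you would have to analyze $\delta_p\cdot_{X_\ell}\delta_p$ on the singular fibers, which is essentially the computation the paper carries out in Section 5.1.
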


 In Section \ref{secriess}, we will show how the arguments of Riess in \cite{riess}  allow us to deduce from Theorem \ref{theonew} the following result in direction of Conjecture \ref{conjvan}.
\begin{theo}\label{theoriess}  Let $X$ be a projective hyper-K\"{a}hler manifold of dimension $2n$.   Assume the hyper-K\"{a}hler manifolds in the same deformation class as $X$ satisfy the SYZ conjecture. Then for any isotropic class $l=c_1(L)\in{\rm NS}(X)={\rm CH}^1(X)$, and for any pair of integers $i,\,j\leq n,\,j\geq i$,   one has
$$ L^{n+1-i} c_{2j}(X)=0\,\,{\rm  in }\,\,{\rm CH}(X).$$
\end{theo}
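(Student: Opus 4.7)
The plan is to follow the strategy of Riess \cite{riess}, using Theorem \ref{theonew} in place of the divisorial relation (\ref{relverbchow}). If $l=c_1(L)$ is already the class of a Lagrangian line bundle on $X$, Theorem \ref{theonew} gives the vanishing directly. The difficulty is that an arbitrary isotropic class $l\in \mathrm{NS}(X)$ need not be nef, nor need its nef multiple define a Lagrangian fibration (as illustrated in the introduction by the O'Grady example).

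To get around this, I would combine the assumed SYZ conjecture in the deformation class of $X$ with the global Torelli theorem to produce, exactly as in \cite{riess}, a projective hyper-K\"ahler manifold $X'$ birational to $X$ via $\varphi: X \dashrightarrow X'$, and such that the strict transform $L'$ of $L$ on $X'$ is a Lagrangian line bundle, associated with some Lagrangian fibration $f': X'\to B'$. Concretely, one moves $l$ through the moduli space of $X$ to a deformation on which the parallel transport of $l$ is nef, hence by SYZ defines a Lagrangian fibration; the global Torelli theorem then allows one to replace that deformation by a birational model of $X$ itself. On $X'$, Theorem \ref{theonew} yields
\[
(L')^{n+1-i}\, c_{2j}(X') = 0 \quad \text{in } \mathrm{CH}(X')
\]
for all integers $i,j\leq n$ with $j\geq i$.

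The next step is to transport this vanishing back to $X$. Following Riess, I would invoke the results of Huybrechts \cite{huy} on self-correspondences attached to the birational map $\varphi$, which induce a graded ring isomorphism $\varphi_*: \mathrm{CH}(X)\xrightarrow{\sim}\mathrm{CH}(X')$. By construction this isomorphism sends $L$ to $L'$, and it sends $c_{2j}(X)$ to $c_{2j}(X')$ because a birational map between hyper-K\"ahler manifolds is an isomorphism in codimension one, so the two cotangent bundles are canonically identified over a common big open subset, and their Chow-theoretic Chern classes agree accordingly. Pulling the vanishing on $X'$ back through $\varphi_*$ then gives $L^{n+1-i}\, c_{2j}(X) = 0$ in $\mathrm{CH}(X)$, as desired.

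The main obstacle is the first step, namely the production of the birational model $X'$ on which the given isotropic class $L$ becomes Lagrangian: this is the delicate deformation-theoretic input supplied by the combination of SYZ with global Torelli, and the fact that it was already handled by Riess in her argument is exactly what allows the present \emph{extension} to work. Once Theorem \ref{theonew} is available, no new deformation-theoretic ingredient is needed; the only additional verification is that Huybrechts's ring isomorphism preserves the Chern classes of the cotangent bundle, which is immediate from the codimension-one isomorphism property recalled above.
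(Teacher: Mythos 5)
Your overall route is the same as the paper's: produce, via Riess's construction, a projective hyper-K\"ahler $X'$ birational to $X$ together with a correspondence sending $L$ to a nef (hence, being isotropic, Lagrangian by SYZ) line bundle $L'$, apply Theorem \ref{theonew} on $X'$, and transport the vanishing back through a Chow-ring isomorphism preserving Chern classes. The problem is the one step for which you supply your own argument and call ``immediate'': that the ring isomorphism maps $c_{2j}(X)$ to $c_{2j}(X')$. Your justification --- the birational map is an isomorphism in codimension one, so the cotangent bundles are identified over a big open subset $U\cong U'$, hence the Chow-theoretic Chern classes agree --- only shows that the difference between the transported class and $c_{2j}(X')$ restricts to zero in ${\rm CH}(U')$, i.e.\ that it is represented by a cycle supported on the codimension $\geq 2$ closed set $X'\setminus U'$. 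For $2j\geq 2$ such classes are in general nonzero in ${\rm CH}^{2j}(X')$; the restriction to the complement of a codimension $\geq 2$ subset is injective on divisor classes but not in higher codimension. So codimension-one agreement does not give the identity you need in ${\rm CH}(X')$, and this is exactly the delicate point rather than a formality.

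There is also a structural inaccuracy: the relevant correspondence is not (the closure of) the graph of the birational map $\varphi$, and the Chow-level ring isomorphism is not in \cite{huy}, which works in cohomology. As recalled in the paper (Theorem \ref{theorefriess}, following \cite{riess} and \cite{riess2}), the correspondence $\Gamma$ is the effective cycle obtained as the limit of the graphs of isomorphisms $X_t\cong X'_t$ between small deformations; it contains components over the indeterminacy loci besides the closure of the graph, and it is precisely because $\Gamma$ is a specialization of graphs of isomorphisms --- each of which induces a ring isomorphism sending $c_i$ to $c_i$ --- that $\Gamma_*$ is a graded ring isomorphism with $\Gamma_*(c_i(X))=c_i(X')$ (Riess \cite{riess2}). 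One should also note that $L'=\Gamma_*(L)$ is again isotropic, because $\Gamma$ induces a ring isomorphism in cohomology as well, so ${\rm deg}\,L'^{2n}={\rm deg}\,L^{2n}=0$; this is needed before SYZ can be invoked to make $L'$ Lagrangian. With the specialization argument replacing your codimension-one argument, your proof becomes the paper's.
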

Section \ref{seclagmor} of the paper is devoted to introducing and studying the notion of {\it Lagrangian morphism}  between bundles $F,\,E$, of respective ranks $n$ and $2n$ on a variety $Y$, where $E$ is equipped with an everywhere nondegenerate skew-symmetric $2$-form, with the following
\begin{Defi} \label{defimorlag} A morphism $\phi: F\rightarrow E$ of vector bundles as above over $Y$ is Lagrangian if
\begin{enumerate}
\item The generic rank of $\phi$ is equal to $n$.

\item  At any  point $x\in Y$, the image ${\rm Im}\,\phi_x\subset E_x$ is isotropic  for $\langle\,,\,\rangle$ (hence it is   Lagrangian  at a point where the rank of $\phi$ is $n$).
\end{enumerate}
\end{Defi}

 The morphism given by the differential of a Lagrangian fibration map (over the smooth locus of the base) is the motivating example, although it is not general since    its rank loci are not of the expected codimension, except for the first (see Remark \ref{remageoranksup}). Nevertheless,  the following result applies to them when there are no nonreduced fibers in codimension $1$, (and we will explain in Section \ref{secmultiple} a variant if nonreduced fibers appear in codimension $1$).
\begin{theo} \label{theonewlag} Let $\phi:F\rightarrow E$ be a Lagrangian morphism of vector bundles on $Y$. Assume that $\phi$ is injective in codimension $1$. Then

(i) The locus $Y_{n-1}$ where $\phi$ has rank $ n-1$  is of codimension $2$ (or is empty).

(ii) $Y_{n-1}$ is a local complete intersection.

(iii) There is an exact sequence
\begin{eqnarray} \label{eactintro} 0\rightarrow F\rightarrow E\rightarrow F^*\rightarrow \mathcal{G}\rightarrow 0,
\end{eqnarray}
where $\mathcal{G}$ is a torsion sheaf supported on the locus $Y_{\leq n-1}$ where $\phi$ has rank $\leq n-1$, and $\mathcal{G}$ is a line bundle on $Y_{n-1}$.
\end{theo}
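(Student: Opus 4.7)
The plan is to first establish (iii) by constructing the four-term sequence explicitly, and then to deduce (i) and (ii) from it. Using the symplectic form on $E$ to identify $\omega^\flat \colon E \xrightarrow{\sim} E^*$, define $\tilde\phi := \phi^* \circ \omega^\flat \colon E \to F^*$; the isotropy condition on $\phi$ is exactly $\tilde\phi \circ \phi = 0$, and $\phi$ is injective as a sheaf map because its generic rank equals $\mathrm{rk}\,F = n$. The pointwise rank of $\tilde\phi$ at any point of $Y$ equals that of $\phi$ since $\omega^\flat$ is an isomorphism. Thus, with $\mathcal{G} := \mathrm{coker}\,\tilde\phi$, we have a complex
\[
0 \to F \xrightarrow{\phi} E \xrightarrow{\tilde\phi} F^* \to \mathcal{G} \to 0,
\]
and what remains for (iii) is exactness at $E$. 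The key input is the torsion-freeness of $\mathcal{Q} := E/\phi(F)$: for an injective bundle map over a smooth variety, the torsion of the cokernel is supported on the codimension-$1$ part of the rank-drop locus (by Smith normal form at codimension-$1$ DVRs, the torsion at such a point vanishes precisely when $\phi$ has maximal rank there), and the codimension-$1$ injectivity hypothesis eliminates all such torsion. The factorization $\tilde\phi = \bar\phi \circ (E \twoheadrightarrow \mathcal{Q})$ then exhibits $\bar\phi \colon \mathcal{Q} \to F^*$ as a generically iso map between torsion-free rank-$n$ sheaves, whose kernel---a torsion subsheaf of the torsion-free $\mathcal{Q}$---vanishes. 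Hence $\ker\tilde\phi = \phi(F)$ and the sequence above is exact.

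For (i), the sequence above is a length-$2$ locally free resolution of $\mathcal{G}$, so $\mathrm{pd}_{\mathcal{O}_Y}(\mathcal{G}) \leq 2$; Auslander--Buchsbaum on the smooth base $Y$ forces $\mathrm{codim}_Y\,\mathrm{supp}(\mathcal{G}) \leq 2$, and since $\mathrm{supp}(\mathcal{G}) = Y_{\leq n-1}$ has codimension $\geq 2$ by hypothesis, the codimension is exactly $2$, so $Y_{n-1}$ is of codimension $2$ when nonempty. For (ii), near $y \in Y_{n-1}$ split $F = F_1 \oplus F_2$ with $\mathrm{rk}\,F_1 = n-1$ and $\phi|_{F_1}$ a subbundle inclusion (using that $\ker\phi(y)$ is one-dimensional). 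Then $\phi(F_1) \subset E$ is isotropic of rank $n-1$, its $\omega$-orthogonal $\phi(F_1)^\perp$ is a subbundle of rank $n+1$, and $V := \phi(F_1)^\perp/\phi(F_1)$ is a symplectic rank-$2$ bundle; by isotropy $\phi(F_2) \subset \phi(F_1)^\perp$, so $\phi|_{F_2}$ induces a section $s$ of $F_2^* \otimes V$ whose zero locus equals $Y_{n-1}$ near $y$. By (i) this zero locus has codimension $2 = \mathrm{rk}(F_2^* \otimes V)$, so $s$ is a regular section and $Y_{n-1}$ is locally cut out by a regular sequence of length $2$, a local complete intersection. The same local picture identifies $\mathcal{G}$ near $y$ with $F_2^*|_{Y_{n-1}}$, a line bundle on $Y_{n-1}$.

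The main obstacle I foresee is justifying the torsion-freeness of $\mathcal{Q}$ cleanly---without it the saturation of $\phi(F)$ in $E$ could strictly contain $\phi(F)$, and exactness at $E$ in (iii) would fail. Once this commutative-algebra input is secured, the Auslander--Buchsbaum step for (i) and the local normal-form argument for (ii) are routine.
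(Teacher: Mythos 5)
Your route is genuinely different from the paper's for (i) and (ii), and it works, but the one step you yourself flag as the ``main obstacle'' is a real gap as written, and it is exactly the point where the paper's proof does its only real work. The Smith-normal-form argument at codimension-one points only shows that the torsion of $\mathcal{Q}=E/\phi(F)$ is not supported in codimension $1$; it does not by itself exclude torsion supported in codimension $\geq 2$ (a priori $\phi(F)$ could fail to be saturated along a codimension-two subset), which is precisely what exactness at $E$ requires. The paper (Proposition \ref{pronullC}(ii)) closes this by a Hartogs-type extension: a local section $\alpha$ of ${\rm Ker}\,\phi^*$ equals $\phi(\beta)$ off $Y_{\leq n-1}$, and since $Y$ is smooth and $Y_{\leq n-1}$ has codimension $\geq 2$, the section $\beta$ of the vector bundle $F$ extends, giving $\alpha=\phi(\tilde\beta)$ everywhere. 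Equivalently, in your language: $F$ is reflexive and agrees with its saturation in $E$ in codimension $1$, hence equals it; with this one lemma inserted your proof of (iii) is complete, and it is essentially the paper's argument in a different dress.

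Granting (iii), the rest of your proposal is correct and takes a different path than the paper. The paper gets (i) and (ii) from the universal local model: Lemma \ref{lewarmrank} shows the rank-$(n-1)$ stratum of the space of Lagrangian matrices is smooth of codimension $2$ and lies in the smooth locus of $M_{\rm lag}$, and one pulls this back by local trivializations, using the codimension hypothesis to rule out excess intersection. You instead get (i) homologically (the four-term locally free resolution gives ${\rm pd}\,\mathcal{G}\leq 2$, then Auslander--Buchsbaum plus ${\rm depth}\leq\dim{\rm supp}$ forces ${\rm codim}\,{\rm supp}\,\mathcal{G}\leq 2$, and the hypothesis gives equality), and (ii) by the local splitting $F=F_1\oplus F_2$, the rank-$2$ symplectic bundle $V=\phi(F_1)^{\perp}/\phi(F_1)$, and the section $s$ of $F_2^*\otimes V$ cutting out $Y_{n-1}$, which is regular because of (i) and Cohen--Macaulayness of $Y$. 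This buys you two things the paper's treatment leaves more implicit: it avoids the matrix-model computation entirely, and it exhibits $\mathcal{G}\cong F_2^*\otimes\mathcal{O}_{Y_{n-1}}$ with its natural (lci) scheme structure, which is a cleaner justification of the line-bundle statement than the paper's remark that $\mathcal{G}_{\mid Y_{n-1}}$ is the cokernel of a constant-rank bundle map. Do spell out the small diagram chase identifying $\mathcal{G}$ with ${\rm coker}(V\rightarrow F_2^*)$ near $y$, and note that your argument, like the paper's, uses smoothness (or at least normality and Cohen--Macaulayness) of $Y$, which is the setting of Proposition \ref{pronullC} even though the theorem's statement does not repeat it.
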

The geometry of this sheaf $\mathcal{G}$ seems very interesting, as it has a locally free resolution of length $3$.

 In Section \ref{secranklocilagfib}, we will study the loci $X_{k}$, resp. $X_{\leq k}$, where a Lagrangian fibration morphism has rank $k$, resp. $\leq k$. We will  explain why, starting from corank $2$,   they do  not satisfy the general codimension estimates for  general Lagrangian morphisms of vector bundles established in Section \ref{seclagmor}. We will also  establish the following
 \begin{theo} \label{theopourrankloci} Let  $X$ be a complex manifold of dimension $2n$ equipped with a symplectic holomorphic structure and let $f:X\rightarrow B$ be a holomorphic Lagrangian fibration, with $B$ smooth. Then for any irreducible component $X_{k,i}$ of $X_k$, with image $B_{k,i}$ in $B$, the following hold

 (1) The dimension  ${\rm dim}\,B_{k,i}$ is at most $ k$.

 (2) The relative dimension of $X_{k,i}$ over $B_{k,i}$ is at least $k$.

 (3) If we have equality in (1) and (2), any connected component of the  general fiber $X_{k,i,b}$, $b\in B_{k,i}$, is a complex torus $T$ of dimension $k$.

 (4) Under the same assumptions as in (3), the locally closed complex submanifold  $X_k$ is smooth along $X_{k,i,b}$ and the restricted normal bundle $N_{X_{k}/X\mid X_{k,i,b}}$ is on each connected component $T$ a homogeneous vector bundle on the complex torus $T$.
 \end{theo}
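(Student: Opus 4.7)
The plan is to exploit the commuting Hamiltonian vector fields arising from local coordinates on $B$, which generate a local $\mathbb{C}^n$-action on $X$ whose orbits lie in the rank strata of $df$. Choose coordinates $t_1, \ldots, t_n$ on an open $U \subset B$ and set $V_i := \sigma^{-1}(d(f^* t_i))$ on $f^{-1}(U)$. A direct calculation shows each $V_i$ lies in $K^\perp \subset K = \ker df$ (and hence is tangent to the fibers of $f$), and that $V_1(x), \ldots, V_n(x)$ span $K_x^\perp$, of dimension equal to the rank of $df_x$. The Lagrangian condition $K^\perp \subset K$ yields $\{f^*t_i, f^*t_j\} = \sigma(V_i, V_j) = 0$, so $[V_i, V_j] = 0$, and the $V_i$ integrate to a local holomorphic $\mathbb{C}^n$-action on $f^{-1}(U)$. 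Since commuting flows preserve the rank of the span of their generating vector fields, the orbit through any $x \in X_k$ is contained in $X_k \cap f^{-1}(f(x))$ and has dimension exactly $k$.

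Part (1) then follows from a tangent-space count at a general smooth point $x \in X_{k,i}$ lying over a smooth point of $B_{k,i}$ where $f|_{X_{k,i}}$ is dominant: the image of $T_x X_{k,i}$ under $df_x$ equals $T_{f(x)} B_{k,i}$, yet is contained in $df_x(T_xX)$, of dimension $k$. Part (2) is immediate from the orbit construction: the fiber of $f|_{X_{k,i}}$ over $f(x)$ contains the $k$-dimensional orbit through $x$, so the relative dimension is at least $k$.

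For (3), the equalities in (1) and (2) force a general fiber $X_{k,i,b}$ to have pure dimension $k$. On a connected component $T$, the span of the $V_i$ has constant rank $k$ (since $T \subset X_k$), so the orbit through every $t \in T$ is open in $T$, and connectedness forces $T$ to be a single $\mathbb{C}^n$-orbit. Writing $T \cong \mathbb{C}^n / \mathrm{Stab}(t_0)$, compactness of $T$ (valid in the proper/projective setting) together with $\dim_\mathbb{C} T = k$ force the stabilizer to split as $V_0 \oplus \Lambda$, where $V_0$ is the $\mathbb{C}$-linear kernel of $(a_1, \ldots, a_n) \mapsto \sum a_i V_i(t_0)$ (of complex dimension $n-k$) and $\Lambda$ is a full rank lattice in a complex complement; hence $T \cong \mathbb{C}^k / \Lambda$ is a complex torus of dimension $k$.

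For (4), the $\mathbb{C}^n$-action is defined on a neighborhood of $T$ in $X$, preserves $X_k$ (hence $X_{k,i}$, by connectedness of $\mathbb{C}^n$), and restricts to the transitive translation action on $T$. This homogeneity transports the local analytic structure of the pair $(X, X_k)$ from one point of $T$ to any other: in particular the Zariski tangent dimension of $X_k$ is constant on $T$, and the $\mathbb{C}^n$-action lifts to an equivariant structure on $N_{X_k/X}|_T$, realizing it as a homogeneous vector bundle on $T = \mathbb{C}^n / \mathrm{Stab}(t_0)$. It remains to produce a single smooth point of $X_k$ on $T$: for $b$ general in $B_{k,i}$ both the singular locus of $X_{k,i}$ and the intersections of $X_{k,i}$ with other components of $X_k$ have dimension less than $2k$, so their images in $B_{k,i}$ either fail to cover it or, if dominant, meet $X_{k,i,b}$ in dimension strictly less than $k$; hence $T$ contains a point where $X_k = X_{k,i}$ locally and $X_{k,i}$ is smooth. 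This dimension-count step controlling the bad loci in the base is where I expect the main technical care to be required; everything else is essentially formal once the commuting vector-field framework is in place.
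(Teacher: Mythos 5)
Your overall strategy is the same as the paper's: commuting vertical Hamiltonian vector fields $V_i=\sigma^{-1}(f^*dt_i)$ whose flows preserve $f$ and hence the rank strata, Sard for (1), the $k$-dimensional orbits for (2), openness of orbits plus the quotient-of-$\mathbb{C}^n$ description for (3), and transport by the group action for (4). Your variant of using all $n$ pulled-back coordinates (so that the span of the $V_i$ automatically has rank $k$ at every point of $X_k$) is a small but genuine simplification: the paper instead picks $k$ functions adapted to $f(X_{k,i})$ and must first prove, via the map to ${\rm Grass}(k,f^*T_B)$, that ${\rm Im}\,f_{*}$ is constant along the fiber so that those $k$ differentials stay independent along all of $X_{k,i,b}$; your setup sidesteps that lemma.

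There is, however, one concrete gap, in (3): the compactness of $T$ is asserted ("valid in the proper/projective setting") but does not follow from properness of $f$ alone. $X_{k,i}$ is only locally closed in $X$, so $X_{k,i,b}=X_{k,i}\cap X_b$ need not be closed in the compact fiber $X_b$: a priori a connected component $T$ could accumulate onto points of lower rank, in which case $T$ would be a non-compact quotient of $\mathbb{C}^k$ and the torus conclusion would fail. Since $X_{k,i}$ is an irreducible component of $X_k$ it is closed in $X_k$, so $\overline{X_{k,i}}\setminus X_{k,i}\subset X_{\leq k-1}$, and what is needed is exactly the paper's extra step: apply the Sard-type bound (your part (1), i.e.\ Lemma \ref{leridicule}) to every stratum $X_l$, $l\leq k-1$, to get ${\rm dim}\,f(X_{\leq k-1}\cap \overline{X_{k,i}})\leq k-1<k={\rm dim}\,B_{k,i}$, so that a general $b\in B_{k,i}$ satisfies $X_b\cap X_{\leq k-1}=\emptyset$; then $X_{k,i,b}=\overline{X_{k,i}}\cap X_b$ is closed in $X_b$, hence compact, and the rank of $f$ is $k$ along all of it. You do carry out a dimension count of this type in (4) (for ${\rm Sing}\,X_{k,i}$ and intersections with other components), but the count over the lower-rank strata, which is what makes $T$ compact and is where the "general $b$" hypothesis is genuinely used in (3), is missing and should be added; once it is, the rest of your argument goes through.
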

  Furthermore, we prove in Theorem  \ref{propnewauckland} that the  dimension of the locus $X_k$ where $f$ has rank $k$ is expected to be at least $2k$ and the dimension of its image $B_k$ in $B$ is expected to be at least $k$, unless some unexpected cohomological  vanishing (\ref{eqvanspecial}), (which is  stronger than  (\ref{relverbchern})),  holds.
 As a consequence of our arguments, we get a positivity result for the Chern classes which provides some evidence for the  questions asked in \cite{niep}, see also \cite{osv}.
\begin{theo} \label{theochitop} (Cf. Theorem \ref{propnewauckland}) Let $f: X\rightarrow B$ be a lagrangian fibration of a hyper-K\"{a}hler $2n$-fold with Lagrangian line bundle $L$. Assume the base $B$ is smooth and the rank loci $X_{k}$ satisfy  ${\rm dim}\,X_k= 2k$, ${\rm dim}\,f(X_k)=k$ for all $k$.
Then the classes $L^{n-i}(X)c_{2i}(X)$ are $\mathbb{Q}$-effective.
In particular, for $i=n$, one has $\chi_{\rm top}(X)\geq 0$, and furthermore $\chi_{\rm top}(X)>0$ if the locus $X_0$ where the rank of $f$ is $0$ is not empty.
\end{theo}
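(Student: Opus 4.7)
The plan is to stratify $X$ by the rank loci $X_k^\circ := X_k \setminus X_{\leq k-1}$ of the differential of $f$ and use the torus-fibration structure on each stratum given by Theorem \ref{theopourrankloci}(3)--(4) to exhibit $L^{n-i} c_{2i}(X)$ as an effective cycle.

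The first observation is that, since $L^d = f^* H_B$ and $\dim B_k = k$ by assumption, $L|_{X_k}$ factors through $B_k$, so $L^{k+1}|_{X_k} = 0$ in ${\rm CH}(X_k)_{\mathbb{Q}}$. Hence $L^{n-i}$ kills any cycle class supported on $\bigcup_{k < n-i} \overline{X_k}$, and only the strata with $k \geq n-i$ can contribute to $L^{n-i} c_{2i}(X)$. The second observation is that, by Theorem \ref{theopourrankloci}(3)--(4), over a Zariski-open subset $B_k^\circ \subset B_k$ the map $X_k^\circ \to B_k^\circ$ has generic fibers that are disjoint unions of complex tori $T$ of dimension $k$, along which $T_{X_k}|_T$ is an extension of trivial bundles and $N_{X_k/X}|_T$ is homogeneous. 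Since homogeneous bundles on a complex torus have vanishing rational Chern classes in positive degree (they are iterated extensions of line bundles in $\mathrm{Pic}^0$), one concludes $c_{2j}(T_X)|_T = 0$ in rational cohomology for all $j \geq 1$.

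To conclude, one shows that $c_{2i}(X)$ admits a representative supported in $\overline{X_{n-i}}$ and that $L^{n-i} \cdot [\overline{X_{n-i}}]$ is effective. Using $L^{n-i} = \frac{1}{d^{n-i}} f^* H_B^{n-i}$ together with $\dim B_{n-i} = n-i$, the class $L^{n-i}|_{X_{n-i}}$ equals the pullback via $f|_{X_{n-i}}$ of the positive $0$-cycle $H_B^{n-i}|_{B_{n-i}}$, yielding a positive sum of fibers of $X_{n-i} \to B_{n-i}$, each of which by Theorem \ref{theopourrankloci}(3) is a disjoint union of complex tori of dimension $n-i$ (in particular effective). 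For the case $i = n$, where $L^0 = 1$, the argument collapses to the additivity of the topological Euler characteristic: $\chi_{\rm top}(X) = \sum_k \chi(X_k^\circ)$; for $k \geq 1$ the torus-bundle structure with fibers of positive dimension forces $\chi(X_k^\circ) = 0$, leaving $\chi_{\rm top}(X) = \chi(X_0) = |X_0| \geq 0$, strict iff $X_0 \neq \emptyset$.

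\textbf{Main obstacle.} The most delicate step is upgrading the pointwise cohomological vanishing $c_{2j}(T_X)|_T = 0$ on generic torus fibers into a cycle-level support statement, namely that $c_{2i}(X)$ is represented by a class supported on $\overline{X_{n-i}}$ with a positive coefficient. This almost certainly requires a Porteous-type formula tailored to the Lagrangian morphism $df$ under the expected-codimension assumption, exploiting the resolution of Theorem \ref{theonewlag} and its iterated analogues on deeper strata to control the torsion sheaves $\mathcal{G}$ that obstruct direct Chern-class computation. The pure Euler-characteristic case bypasses this difficulty by relying only on additivity of $\chi$, which is why it emerges as a particularly clean consequence.
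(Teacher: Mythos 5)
Your outline does not establish the main claim. The $\mathbb{Q}$-effectivity of $L^{n-i}c_{2i}(X)$ is reduced in your proposal to the statement that $c_{2i}(X)$ is represented, with positive multiplicities, by a cycle supported on $\overline{X_{n-i}}$, and you yourself flag this as the "main obstacle", to be handled by an unspecified Porteous-type formula. That missing step is precisely the content of the paper's proof (Theorem \ref{propnewauckland}(ii)--(iii), to which Theorem \ref{theochitop} refers): the paper never proves a support statement for $c_{2i}(X)$ itself. Instead it represents $L^{n-i}$ by $X^{n-i}=f^{-1}(B^{n-i})$ for a general complete intersection $B^{n-i}\subset B$, reduces $L^{n-i}c_{2i}(X)$ to $c_{2i}(\mathcal{E})$ for the rank $2i$ symplectic bundle $\mathcal{E}\subset \Omega_{X^{n-i}}$ using Whitney's formula and Theorem \ref{theonew}, and then computes $c_{2i}(\mathcal{E})$ as the pushforward of the virtual zero locus of the tautological section $\alpha$ on $\mathbb{P}(\Omega_{B^{n-i}})\times_{B^{n-i}}X^{n-i}$, via Fulton's excess intersection formula. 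The decisive points are (a) along each intermediate stratum $X^{n-i}_l$, $l>0$, the excess bundle is a \emph{trivial} bundle of rank $l$ (built from Hamiltonian differentials pulled back from $B^{n-i}_l$), so these strata contribute zero, and (b) the deepest stratum, nonempty and of the expected dimension, contributes without excess an effective nonzero cycle. Nothing in your argument produces either the vanishing of the intermediate contributions or the positivity of the multiplicities on the deepest one. Note also that your cohomological vanishing of $c_{2j}(T_X)|_T$ on torus fibers via homogeneity cannot be promoted to the Chow level, where the effectivity statement lives: Section \ref{secc2square} of the paper shows that homogeneous bundles such as $M\oplus M^{-1}$ with $M\in{\rm Pic}^0$ can have nontrivial (indeed infinitely many distinct) classes $c_2$ in ${\rm CH}^2$, which is exactly why the Chow-level analysis there is delicate.

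The Euler-characteristic shortcut for $i=n$ also has a gap. Additivity gives $\chi_{\rm top}(X)=\sum_k\chi(X_k)$, but to conclude $\chi(X_k)=0$ for $k\geq 1$ you invoke a "torus-bundle structure" on $X_k$, whereas Theorem \ref{theopourrankloci}(3) only describes the \emph{general} fiber of $X_k\to f(X_k)$; over the lower-dimensional boundary of $f(X_k)$ you have no such structure, and the canonical local $\mathbb{C}^k$-actions by Hamiltonian flows do not glue into a global group action without the relative group scheme $\mathcal{G}$ of Arinkin--Fedorov, whose existence for reducible or non-reduced fibers the paper explicitly states is open and deliberately avoids. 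So this part of your argument is essentially Beauville's classical one, valid only under extra hypotheses, while the paper obtains $\chi_{\rm top}(X)\geq 0$ (with strict inequality when $X_0\neq\emptyset$) as the case $k=n$ of the same excess-intersection computation.
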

The last  result is easy and follows from Beauville's argument in \cite{beauville} if we know that the group scheme
$\mathcal{G}$,   which is constructed under some assumptions in \cite{arinkinfedorov}, \cite{kim} and \cite{kim2}, acts on $X$.
\begin{rema} {\rm The question of the existence of the relative group scheme $\mathcal{G}$ over the base $B$,  acting on $X$ over $B$ (extending the relative Albanese variety defined over the regular locus of $f$ and acting by translation on the smooth fibers) seems to be  open when some fibers are reducible or nonreduced. One of our motivations in this paper is   to analyze the geometry of Lagrangian fibrations without assuming it.}
\end{rema}
\begin{rema} {\rm The numerical positivity of the class $c_2(X)$ is well-known and follows from the L\"ubke (or Bogomolov-Miyaoka-Yau) inequality and existence of K\"ahler-Einstein metrics.}
\end{rema}

  In Section \ref{secsupport} we will   establish  Conjecture \ref{conjvanop}  assuming that the base of the Lagrangian fibration is smooth and under a dimension  assumption on the rank strata $X_k$.
\begin{theo}\label{theonewplustechass} Let $f: X\rightarrow B$ be a Lagrangian fibration and $i$ be a positive integer. Assume that $B$ is smooth in codimension $i-1$ and that for any
$k\geq n-i+1$, and any irreducible component $Z$ of $X_k$, we have either ${\rm dim}\,Z\leq 2k$ or  ${\rm dim}\,f(Z)\geq k$.
Then  for $j\geq i$, $c_{2j}(X)$ vanishes in ${\rm CH}(X\setminus f^{-1}(B^i))$ for some codimension $i$ closed algebraic subset $B^i$ of $B$.
\end{theo}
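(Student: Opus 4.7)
The plan is to exhibit a codimension-$i$ closed subset $B^i \subset B$ and prove $c_{2j}(T_X)|_{X \setminus f^{-1}(B^i)} = 0$ for $j \geq i$, by combining the four-term exact sequence of Theorem~\ref{theonewlag} with a descending induction on the rank stratification of $X$.

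First, I would take
\[ B^i := B^{\rm sing} \;\cup\; f(X_{\leq n-i}) \;\cup\; \bigcup_{i \leq j \leq n} {\rm supp}(\gamma_j), \]
where $\gamma_j := [c(T_B)c(\Omega_B)]_{2j} \in {\rm CH}^{2j}(B)$ and ${\rm supp}(\gamma_j)$ denotes the support of a chosen codimension-$2j$ cycle representative. Each piece has codimension $\geq i$ in $B$: the singular locus by hypothesis, $f(X_{\leq n-i})$ by Theorem~\ref{theopourrankloci}(1) since $\dim f(X_k)\leq k \leq n-i$ for $k\leq n-i$, and ${\rm supp}(\gamma_j)$ because $2j \geq 2i \geq i$. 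Setting $U := f^{-1}(B \setminus B^i)$, the base $B$ is smooth on $B \setminus B^i$ and $f$ has rank $\geq n-i+1$ throughout $U$. On the open subset $U^0 := U \cap X^0$ of maximal rank, the usual short exact sequence $0 \to f^*\Omega_B \to \Omega_X \to f^*T_B \to 0$ combined with the symplectic isomorphism $T_X \simeq \Omega_X$ yields $c_{2j}(T_X)|_{U^0} = f^*\gamma_j|_{U^0} = 0$ by the choice of $B^i$. The localization exact sequence
\[ {\rm CH}(U_{\leq n-1}) \to {\rm CH}(U) \to {\rm CH}(U^0) \to 0 \]
then shows that $c_{2j}(T_X)|_U$ is represented by a cycle supported on $U_{\leq n-1}$.

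The heart of the argument is a descending induction along
\[ U_{\leq n-1} \supset U_{\leq n-2} \supset \cdots \supset U_{\leq n-i} = \emptyset, \]
showing that a class supported on $U_{\leq k}$ (with $n-i \leq k \leq n-1$) is in fact supported on $U_{\leq k-1}$. By localization on $U_{\leq k}$, this reduces to vanishing on the smooth stratum $U_k \setminus U_{\leq k-1}$. Under the dimension assumption of the theorem combined with Theorem~\ref{theopourrankloci}, one has $\dim U_k = 2k$, $\dim f(U_k)=k$, the components of a general fiber of $f|_{U_k}\colon U_k \to B_k$ are complex tori of dimension $k$, and $N_{U_k/U}$ is homogeneous along such fibers by Theorem~\ref{theopourrankloci}(4). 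Applying Theorem~\ref{theonewlag} (or its Section~\ref{secmultiple} variant, in the presence of codimension-one multiple fibers) to the induced Lagrangian morphism on $U_k$, and combining with the conormal sequence $0 \to N^*_{U_k/U} \to \Omega_X|_{U_k} \to \Omega_{U_k} \to 0$, one decomposes $c_{2j}(T_X)|_{U_k}$ as a pullback from $B_k$ plus a correction supported on $U_{\leq k-1}$. The pullback component lies in ${\rm CH}^{2j}(B_k)$ and is supported on a codimension-$2j$ subset of $B_k$, which as a subset of $B$ has codimension $n-k+2j \geq 2j \geq i$; enlarging $B^i$ by these finitely many supports (keeping it of codimension $\geq i$) kills the pullback on $U$, leaving only the correction in $U_{\leq k-1}$ and completing the inductive step.

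The main obstacle is executing this inductive step: one must verify that the Chern class contributions arising from the homogeneous normal bundle $N_{U_k/U}$ on the torus fibers really split into a pullback from $B_k$ plus a part supported on $U_{\leq k-1}$. On a complex torus, homogeneous vector bundles have numerically trivial but not always rationally trivial Chern classes, so the argument must exploit the specific Lagrangian identification of $N_{U_k/U}$, via the symplectic pairing on $T_X$, to upgrade the numerical vanishing to the required rational equivalence modulo pullbacks from $B_k$. This is precisely where the dimension hypothesis on the rank strata is used in full; once it is carried out the induction terminates at $U_{\leq n-i} = \emptyset$, yielding the desired vanishing.
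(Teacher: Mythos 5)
Your proposal is an outline rather than a proof: the step you yourself flag as ``the main obstacle'' -- showing that the contribution of the stratum $U_k$ splits into a pullback from $B_k$ plus a class supported on $U_{\leq k-1}$ -- is precisely the hard point, and it is left unestablished. The difficulty is real, not technical: homogeneous bundles on abelian varieties can have Chern classes that are nonzero in the Chow group (e.g.\ $M\oplus M^{-1}$ with $c_2=-M^2$ nontorsion), and this is exactly the obstruction the paper isolates in Section \ref{secc2square}, where the analogous stratum contribution to $c_2^2L^{n-1}$ can only be killed by a delicate normal-function/torsion-point argument and only for LSV manifolds. In addition, you invoke more than the hypotheses give: parts (3) and (4) of Theorem \ref{theopourrankloci} require the exact equalities ${\rm dim}\,X_k=2k$ and ${\rm dim}\,f(X_k)=k$, whereas the assumption ``either ${\rm dim}\,Z\leq 2k$ or ${\rm dim}\,f(Z)\geq k$'' allows strict inequalities; its only role (via Lemma \ref{lepour*}) is to guarantee Condition ($*_k$). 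Likewise Theorem \ref{theonewlag} cannot be applied ``to the induced Lagrangian morphism on $U_k$'': on a stratum with $k<n$ the morphism $f^*$ is nowhere of rank $n$, so it is not Lagrangian in the sense of Definition \ref{defimorlag}, and Theorem \ref{theonewlag} only governs the corank-one locus of a morphism injective in codimension $1$ (essentially the case $i=2$, cf.\ Corollary \ref{thoeversionfaible}). Finally, in the descending induction you conflate the restriction $c_{2j}(T_X)_{\mid U_k}$ with the restriction of a (non-canonical) lift of the class to ${\rm CH}(U_{\leq k})$; controlling the latter is what the localization argument actually requires, and it is a strictly harder problem.

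The paper's proof takes a different route that bypasses the strata geometry entirely. One passes to the relative Grassmannian $\mathcal{G}_{i-1}$ of $(i-1)$-planes in $T_{B'}$ and restricts the pulled-back differential to the tautological rank-$(n-i+1)$ subbundle $\mathcal{S}\subset\pi^*\Omega_{B'}$, obtaining $\phi_{\mathcal{S}}:\mathcal{S}\rightarrow\pi_X^*\Omega_{X'}$ on $X'_{\mathcal{G}}$. The dimension count of Lemma \ref{leimplications} -- this is where Condition ($*_k$), hence the theorem's hypothesis via Lemma \ref{lepour*}, is used -- shows $\phi_{\mathcal{S}}$ is injective away from $f_{\mathcal{G}}^{-1}(Z_i)$ with ${\rm codim}\,Z_i\geq i$ in $\mathcal{G}_{i-1}$. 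Since the image is isotropic, $\mathcal{E}=({\rm Im}\,\phi_{\mathcal{S}})^{\perp}/{\rm Im}\,\phi_{\mathcal{S}}$ has rank $2i-2$ there, so $c_{2j}(\mathcal{E})=0$ for $j\geq i$, and the Whitney formula together with an induction on $i$ gives the vanishing of $c_{2j}(\pi_X^*\Omega_{X'})$ off the preimage of a codimension-$i$ subset; Lemma \ref{lemmachaint} then descends this vanishing from the Grassmannian bundle back to $X$. You would need either to adopt this construction or to supply a genuinely new argument for your inductive step; as it stands the proposal has a gap at its core.
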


\vspace{0.5cm}

{\bf Thanks.} {\it  This paper was finished during my stay in Auckland as a Michael Erceg Senior Visiting Fellow. I thank the   Margaret and John Kalman Charitable Trust for its generous support and the  Mathematics department of Auckland University  for this invitation and excellent working conditions. I also thank St\'{e}phane Druel for answering my questions and the referee for a very careful reading and helpful comments.}
\section{Proof of Theorem \ref{theonew} \label{sectheonew}}
\begin{proof}[Proof of Theorem \ref{theonew}(i)] Let  $f: X\rightarrow B$ be a Lagrangian fibration, where $X$ is projective hyper-K\"{a}hler of dimension $2n$.  Let $H$ be a very ample line bundle  on $B$, whose pull-back to $X$ is thus a multiple  $\mu L$. Let $i\leq n$ be a positive integer.  We will prove the vanishing
$$c_{2j}(\Omega_{X})L^{n-i+1}=0\,\,{\rm in}\,\,{\rm CH}(X)$$ for $j\geq i$  by   induction  on $i$.
 By Bertini,   a  general set of $n-i+1$ sections of $f^*H$ on $X$ defines a smooth complete intersection $\Sigma\subset X$. Along $\Sigma$, we thus have an injective morphism of vector bundles
$$\phi: N_{\Sigma/X}^*\rightarrow \Omega_{X\mid \Sigma},$$
where $N_{\Sigma/X}^*\cong \mathcal{O}_\Sigma(-H)^{n-i+1}$.

As $X$ is hyper-K\"ahler, the vector bundle $\Omega_X$ carries an everywhere nondegenerate skew-symmetric pairing $\langle\,,\,\rangle$.
 We observe that, as $f$ is Lagrangian, the image of $\phi$ is totally isotropic for $\langle\,,\,\rangle$. Indeed, it suffices to prove the result at  a generic point $x$  of $\Sigma$, whose image $b=f(x)$ does not belong to ${\rm Sing}\,B$ and where $f$ is of maximal rank. Then  the image of $\phi$  is contained in $f^*\Omega_{B,b}$, which is Lagrangian in $\Omega_{X,x}$ because $f$ is a Lagrangian fibration.

The subbundle $({\rm Im}\,\phi)^{\perp}$ of $\Omega_{X\mid \Sigma}$ is thus of rank $n+i-1$   and contains ${\rm Im}\,\phi$. Furthermore, the quotient $\Omega_{X\mid \Sigma}/({\rm Im}\,\phi)^{\perp}$ is isomorphic to $N_{\Sigma/X}=N_{\Sigma/X}^{**}$ by duality using  $\langle\,,\,\rangle$. Let
 \begin{eqnarray}\label{eqcalEnew} \mathcal{E}:= ({\rm Im}\,\phi)^{\perp}/{\rm Im}\,\phi.\end{eqnarray}
 This is a vector bundle of rank $2i-2$ on $\Sigma$, hence we have
 \begin{eqnarray}\label{eqannurank} c_{2i}(\mathcal{E})=0\,\,{\rm in}\,\,{\rm CH}(\Sigma).\end{eqnarray}
We now use the exact sequences
$$0\rightarrow N_{\Sigma/X}^* \stackrel{\phi}{\rightarrow} ({\rm Im}\,\phi)^{\perp}\rightarrow \mathcal{E}\rightarrow 0,$$
$$0\rightarrow ({\rm Im}\,\phi)^{\perp}\rightarrow \Omega_{X\mid\Sigma}\rightarrow N_{\Sigma/X}\rightarrow 0$$
explained above and the Whitney formula, which gives equalities in  ${\rm CH}(\Sigma)$
\begin{eqnarray}\label{eqW1} c(\Omega_{X\mid \Sigma})=c(N_{\Sigma/X}) c(({\rm Im}\,\phi)^{\perp})\\
\nonumber
=c(N_{\Sigma/X})c(N_{\Sigma/X}^*) c(\mathcal{E}).
\end{eqnarray}
We also get by inverting the total Chern classes in (\ref{eqW1})
\begin{eqnarray}\label{eqW2} c(\mathcal{E})=c(\Omega_{X\mid \Sigma})c(N_{\Sigma/X})^{-1}c(N_{\Sigma/X}^*)^{-1}\,\,{\rm in}\,\,{\rm  CH}(\Sigma).
\end{eqnarray}
If we expand the equality $c(\Omega_{X\mid \Sigma})=c(N_{\Sigma/X})c(N_{\Sigma/X}^*) c(\mathcal{E})$ of  (\ref{eqW1}), and take into account the fact that
$c_{2i}(\mathcal{E})=0$ and $c(N_{\Sigma/X})=(1+\mu L_{\mid \Sigma})^{n-i+1}$, we find that
\begin{eqnarray}\label{eqdevc2i} c_{2i}(\Omega_{X\mid \Sigma})=\sum_{l>0} \alpha_l L_{\mid \Sigma}^{2l} c_{2i-2l}(\mathcal{E})
\end{eqnarray}
for some integers $\alpha_l$.
Using (\ref{eqW2}), we can replace in (\ref{eqdevc2i}) the terms $c_{2i-2l}(\mathcal{E})$ by polynomials in $L$ and $c_k(\Omega_{X\mid \Sigma})$, which provides us with an equation
\begin{eqnarray}\label{eqdevc2ireplace} c_{2i}(\Omega_{X\mid \Sigma})=\sum_{l>0} \beta_l L_{\mid \Sigma}^{2l} c_{2i-2l}(\Omega_{X\mid \Sigma})
\end{eqnarray}
for some integers $\beta_l$.
Using the fact that the class of $\Sigma$ in $X$ is a nonzero multiple of $L^{n-i+1}$ in ${\rm CH}(X)$, equation (\ref{eqdevc2ireplace}) gives a relation
\begin{eqnarray}\label{eqdevc2ireplacedansX} c_{2i}(\Omega_{X})L^{n-i+1}=\sum_{l>0} \gamma_l L^{2l+n-i+1} c_{2i-2l}(\Omega_{X})
\end{eqnarray}
for some rational numbers $\gamma_l$.

The proof of the vanishing $c_{2i}(\Omega_{X})L^{n-i+1}$ in ${\rm CH}(X)$ thus follows by   induction  on $i$.  The proof of the vanishing of $c_{2j}(\Omega_{X})L^{n-i+1}$ in ${\rm CH}(X)$ for $j\geq i$ works in the same way.
\end{proof}

\section{Lagrangian morphisms of vector bundles \label{seclagmor}}
We study in this section general properties of Lagrangian morphisms of vector bundles introduced in Definition \ref{defimorlag}. We do not know if the  notion has been classically studied.
Our  motivation for introducing this notion and its  relevance for the subject of this paper come from the following
\begin{ex}\label{exlag} Let $X$ be hyper-K\"ahler and let $f: X\rightarrow B$ be a Lagrangian fibration. Then denoting by $B'\subset B$ the smooth locus of $B$ and $X'\subset X$ its inverse image in $X$, the morphism
\begin{eqnarray}\label{eqpullback} \phi:=f^*: f^*\Omega_{B'}\rightarrow \Omega_{X'}
\end{eqnarray}
is Lagrangian on $X'$.
\end{ex}
Note that it is conjectured that, if  $B$ is normal (which we will always assume) $B$ is smooth (and if it is smooth, it is known to be isomorphic to $\mathbb{P}^n$ by \cite{hwang}). In any case, as $B$ is normal, the codimension of $B\setminus B'$ in $B$ is at least $2$, while the discriminant locus,  over which $\phi$ is not everywhere of maximal rank, has codimension $1$ in $B$, so that the notion is already interesting over $B'$.

To study  the rank loci for a general Lagrangian morphism, we consider the universal situation, where the basis is the set  $M_{\rm lag}$ of  matrices $M$  of size $(2n,n)$ which are Lagrangian, in the sense that the morphism $M:\mathbb{C}^n\rightarrow \mathbb{C}^{2n}$ is Lagrangian.
\begin{lemm} \label{lewarmrank} (i)  The subvariety  $M_{\rm lag}\subset M_{2n,n}$ is an irreducible, local complete intersection  subvariety of codimension $n(n-1)/2$, hence of dimension $2n^2-\frac{n(n-1)}{2}$.

(ii) $M_{\rm lag}$ is  smooth at a matrix $M$ which is of rank $\geq n-1$, but it is singular at a matrix which is of rank $\leq n-2$. Along the set $M_{\rm lag,n-2}$ of  matrices of rank $n-2$, $M_{\rm lag}$ has an ordinary quadratic singularity of codimension $7$.

(iii) The locus $M_{{\rm lag}, n-i}$ of isotropic matrices of rank $n-i$ is smooth and  has codimension $\frac{3i^2+i}{2}$ in $M_{\rm lag}$. In particular, the locus $M_{{\rm lag}, n-1}$ is smooth  of  codimension $2$ in $M_{\rm lag}$ and is contained in its smooth locus, and $M_{{\rm lag}, n-2}$ is smooth  of  codimension $7$ in $M_{\rm lag}$ and is contained in its singular locus.
\end{lemm}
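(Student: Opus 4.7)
The approach is to analyze $M_{\rm lag}$ via its rank stratification. Fix a Darboux decomposition $\mathbb{C}^{2n} = L \oplus L'$, so each $M \in M_{2n,n}$ decomposes into top and bottom $n \times n$ blocks $A$ and $B$; the Lagrangian condition $M^T J M = 0$ then reads $A^T B = B^T A$, i.e.\ $A^T B$ is symmetric, giving $n(n-1)/2$ scalar equations. Let $M_{{\rm lag}, k}$ denote the locally closed stratum where ${\rm rk}(M) = k$.

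I would first establish (iii), from which (i) follows easily. A point of $M_{{\rm lag}, n-i}$ is equivalent to the datum of an isotropic subspace $V \subset \mathbb{C}^{2n}$ of dimension $n-i$ (the image of $M$) together with a surjection $\mathbb{C}^n \twoheadrightarrow V$; this identifies $M_{{\rm lag}, n-i}$ with an open subset of the total space of a vector bundle over the isotropic Grassmannian $IG(n-i, 2n)$, so it is smooth and irreducible, with
\[ \dim M_{{\rm lag}, n-i} = \dim IG(n-i, 2n) + n(n-i) = \frac{(n-i)(3n + 3i + 1)}{2}, \]
yielding codimension $(3i^2 + i)/2$ in $M_{\rm lag}$ (namely $2$ for $i=1$ and $7$ for $i=2$). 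For (i), the function $k \mapsto \dim M_{{\rm lag}, k}$ is strictly increasing on $[0, n]$, so the top stratum $M_{{\rm lag}, n}$ is open, dense, and irreducible in $M_{\rm lag}$, which is therefore irreducible of the stated dimension; being cut out in $M_{2n, n}$ by $n(n-1)/2$ equations matching its codimension, it is a local complete intersection.

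For (ii), I would compute the Zariski tangent space at a rank $n-i$ point. Using the $GL_n \times Sp_{2n}$-action on $M_{2n, n}$ (source and target change), we may assume $M_0$ has blocks $A_0 = {\rm diag}(1, \ldots, 1, 0, \ldots, 0)$ with $n-i$ ones and $B_0 = 0$. For a tangent vector with blocks $(\alpha, \beta)$, the linearization of $A^T B = B^T A$ reads $A_0^T \beta = \beta^T A_0$, which forces symmetry of the top-left $(n-i) \times (n-i)$ block of $\beta$ and vanishing of its top-right $(n-i) \times i$ block --- in total $\binom{n-i}{2} + (n-i)i = \binom{n}{2} - \binom{i}{2}$ independent conditions. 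Hence $\dim T_{M_0} M_{\rm lag} = \dim M_{\rm lag} + \binom{i}{2}$, so $M_{\rm lag}$ is smooth at $M_0$ if and only if $i \leq 1$, and singular if and only if $i \geq 2$.

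Finally, to pin down the singularity type along $M_{{\rm lag}, n-2}$, I would compute the quadratic part of the unique ``missing'' linear equation at $M_0$, namely the $(n-1, n)$-entry of $A^T B - B^T A$:
\[ Q = \sum_{\ell=1}^n \bigl( \alpha_{\ell, n-1} \beta_{\ell, n} - \beta_{\ell, n-1} \alpha_{\ell, n} \bigr). \]
The other linear equations force $\beta_{\ell, n-1} = \beta_{\ell, n} = 0$ for $\ell \leq n - 2$; on the resulting linear subspace, $Q$ restricts to a nondegenerate quadratic form in the eight variables $\alpha_{n-1, n-1}, \alpha_{n, n-1}, \alpha_{n-1, n}, \alpha_{n, n}, \beta_{n-1, n-1}, \beta_{n-1, n}, \beta_{n, n-1}, \beta_{n, n}$. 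Thus, locally at $M_0$, $M_{\rm lag}$ is a product of a smooth factor (the local model for $M_{{\rm lag}, n-2}$) with a cone over a smooth quadric in $\mathbb{P}^7$, i.e.\ an ordinary quadratic singularity of codimension $7$. The main obstacle is precisely this last step --- one must carefully identify which eight coordinates survive the linear elimination and verify nondegeneracy of $Q$ on them; the other parts reduce to dimension counts via the isotropic Grassmannian and a routine tangent-space computation.
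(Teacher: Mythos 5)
Your proposal is correct and follows essentially the same route as the paper: stratify by rank, compute the dimension of each stratum via the isotropic Grassmannian to get (iii) and then (i), and analyze the local equations at a normalized corank-$i$ point, identifying at corank $2$ the single equation with no linear part whose nondegenerate rank-$8$ quadratic part gives the ordinary quadratic singularity. Your use of Darboux block coordinates ($A^TB=B^TA$) and the uniform tangent-space count $\dim T_{M_0}=\dim M_{\rm lag}+\binom{i}{2}$ is only a cosmetic variant of the paper's column-by-column equations $\langle m_k,m_l\rangle=0$ and its count of equations lacking independent linear parts.
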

\begin{proof} (i)  Denoting $m_k$ the $k$-th column vector of a matrix $m$, the equations defining $M_{\rm lag}$ in $M_{2n,n}$ are
\begin{eqnarray}\label{eqisot} \langle m_k,\,m_l\rangle=0\end{eqnarray}
for $n\geq l>k\geq 1$. This locus  is thus defined by $n(n-1)/2$ equations. In order to prove that $M_{\rm lag}$ is a local complete intersection,   it suffices to show that its dimension is $2n^2-\frac{n(n-1)}{2}$, which is done by  proving  (iii) since $M_{\rm lag}$ is stratified by the rank $n-i$ of the matrix. The locus of rank $n-i$ isotropic matrices is homogeneous under the symplectic group ${\rm Sp}(2n)$ and of dimension $$(n-i)(n+i)-(n-i)(n-i-1)/2+n(n-i)=2n^2-\frac{n(n-1)}{2}-   \frac{3i^2+i}{2},$$ since a rank $n-i$ isotropic matrix of size $n\times 2n$ determines its image $W_{n-i}$ which is an isotropic vector subspace of $ \mathbb{C}^{2n}$ of dimension $n-i$, and a matrix $\mathbb{C}^n\rightarrow W_{n-i}$ of size $n\times n-i$ and rank $n-i$.  This dimension count proves the codimension statement in  (iii). The irreducibility follows from the above dimension count and the fact that the rank $n$ stratum is homogeneous, hence irreducible.

 (ii) and (iii)  Denote by $m_{ij}$  the entries of a matrix $m\in M_{2n,n}$, where $i\in\{1,\ldots,2n\}$ and $j\in\{1,\ldots, n\}$, and by $m_i$ its $i$-th column vector. Let  $M\in M_{\rm lag}$ be any Lagrangian matrix. As  the image of $M$ is isotropic, we can assume by choosing an adequate basis of $\mathbb{C}^{2n}$ in which   the intersection form $\langle\,,\,\rangle$ is the standard one,
 that for $k\leq {\rm rk}\,M$, the  vector $M_k$ is the basis vector  $f_k$ of $\mathbb{C}^{2n}$, and that it is $0$ for $k>{\rm rk}\,M$.
 For a matrix $m\in M_{2n,n}$, we then write  $m_k=f_k+h_k$ for $k\leq {\rm rk}\,M$, so that the  column vector $h_k(m)$ vanishes at the point $M$. For $k>{\rm rk}\,M$, we write $m_k=:h_k$ and the vector $h_k(m)$ vanishes at the point $M$.
 In the coordinates $h_{kl}$ on $M_{2n,n}$ centered at $M$, the equations (\ref{eqisot}) are
 \begin{eqnarray}\label{eqisotdevelop}  h_{k+n,l}-h_{l+n,k}+ \langle h_k,\,h_l\rangle=0\,\, {\rm for} \,\,k,\,l\leq {\rm rk}\,M\\
 \label{eqisotdevelop1}
 h_{l,k+n} +\langle h_k,\,h_l\rangle=0 \,\, {\rm for} \,\,k\leq {\rm rk}\,M,\,l> {\rm rk}\,M\\
 \label{eqisotdevelop2}
  \langle h_k,\,h_l\rangle=0 \,\, {\rm for} \,\,k> {\rm rk}\,M,\,l> {\rm rk}\,M.
 \end{eqnarray}
 where the terms $\langle h_k,\,h_l\rangle$ are  quadratic in the coordinates $h_{kl}$.

 When the rank of $M$ is $n$, only the equations in the first line (\ref{eqisotdevelop}) appear. When the rank of $M$ is  $n-1$, only the equations in the first and second lines (\ref{eqisotdevelop}), (\ref{eqisotdevelop1}) appear. In both cases, the  linear parts of these equations  are  obviously independent, which proves the result in that case. When the  rank of $M$ is $n-2$, there is one quadratic equation in the third  line (\ref{eqisotdevelop2}), namely $\langle h_{n-1},\,h_n\rangle=0 $. (ii) and the last statement in (iii) follows by studying the rank of this quadratic equation.
\end{proof}
\begin{rema}{\rm  The variety $M_{\rm lag}$ carries a universal Lagrangian morphism, but, as we just proved that it is singular, we cannot say  that it is universal for the study of Lagrangian morphisms of vector bundles  on {\it smooth } bases $Y$. However, as $M_{{\rm lag},\geq n-1}$ is smooth, it is universal for the study of Lagrangian morphisms of rank $\geq n-1$ of vector bundles of ranks $n,\,2n$  on smooth bases $Y$.}
\end{rema}
\begin{rema}\label{remageoranksup} {\rm Consider the case of Example \ref{exlag}. Assume for simplicity that the singular fibers have normal crossing singularities. Then the locus $Z$ where $\phi=f^*$ has corank $1$ is the union of the singular loci of the fibers which is expected to have codimension $2$, by  Lemma \ref{lewarmrank}, so that singular  fibers are singular in codimension $1$. If furthermore the normalization of the locus $Z$ is smooth of dimension $2n-2$,  the locus  where $\phi=f^*$ has corank $i$ is the locus of $i$-branches singularities in the fibers, which is expected to appear in codimension $2i$ (being the locus of intersection of $i$ branches of $Z$). So the geometric dimension count does not fit with the abstract dimension count of Lemma \ref{lewarmrank}. We will discuss from a different viewpoint  this lack of transversality in Section \ref{secranklocilagfib}. }
\end{rema}
In the rest of this section, we will  focus on the first rank stratum parameterizing matrices of rank $n-1$, which is the only one to be studied in order to establish the vanishing (\ref{relchernclassverbchowavecsquare}) because, by Sard's theorem, the locus where a Lagrangian map $f:X\rightarrow B$  has rank $\leq n-2$ maps to a locus of dimension $\leq n-2$ in the base $B$.
Our first result   is the following (cf. Theorem \ref{theonewlag}).
\begin{prop}\label{pronullC} Let $\phi: F\rightarrow E$ be a Lagrangian morphism on a smooth variety $Y$, with ${\rm rank}\,E=2n$.  Assume that

(*)  the codimension of the locus $Y_{\leq n-1}\subset Y$ where $\phi$ has rank $\leq n-1$ is at least $ 2$.

Then

(i)  $Y_{n-1} $ is  a local codimension $2$ complete intersection in $Y$.

(ii) There is an exact sequence on $Y$
\begin{eqnarray}\label{eqexact}0\rightarrow F\stackrel{\phi}{\rightarrow} E\stackrel{\phi^*}{\rightarrow} F^*\rightarrow \mathcal{G}\rightarrow 0,
\end{eqnarray}
where $\mathcal{G}$ is supported on $Y_{\leq n-1}$ and is a line bundle on $Y_{n-1}=Y_{\leq n-1}\setminus Y_{\leq n-2}$.

(iii) The normal bundle of $Y_{n-1}$ in $Y$ is isomorphic to $\mathcal{H}om(\mathcal{E}_2,\mathcal{G})$, where $\mathcal{E}_2$ is a rank $2$ vector bundle with trivial determinant on $Y_{n-1}$.
\end{prop}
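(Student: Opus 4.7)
The idea is to set up a local model of $\phi$ near a point $y \in Y_{n-1}$, from which (i) and (iii) follow by standard rank-drop and normal-bundle considerations, and to establish (ii) globally via a reflexive-sheaf (Hartogs) argument that crucially uses the codimension hypothesis $(*)$.

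First I would choose, near any $y \in Y_{n-1}$, a direct sum decomposition $F = \mathcal{K} \oplus C$ into a line subbundle $\mathcal{K}$ and a rank-$(n-1)$ subbundle $C$, with $\mathcal{K}_y = \ker \phi_y$ and $C_y$ mapping isomorphically onto $\mathrm{Im}\,\phi_y$. Then $\phi|_C : C \hookrightarrow E$ is an isotropic subbundle embedding of rank $n-1$ in a neighborhood of $y$, while $\phi|_\mathcal{K} : \mathcal{K} \to E$ vanishes at $y$ and, by the Lagrangian hypothesis $\mathrm{Im}\,\phi \subset (\mathrm{Im}\,\phi)^\perp$, factors through the rank-$(n+1)$ subbundle $\phi(C)^\perp$. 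Setting $\mathcal{W} := \phi(C)^\perp / \phi(C)$, the symplectic form on $E$ induces a nondegenerate skew form on the rank-$2$ bundle $\mathcal{W}$, so $\det \mathcal{W} = \mathcal{O}$. Composing $\phi|_\mathcal{K}$ with the projection $\phi(C)^\perp \twoheadrightarrow \mathcal{W}$ yields a section $\psi : \mathcal{K} \to \mathcal{W}$ of $\mathcal{H}om(\mathcal{K}, \mathcal{W})$ vanishing at $y$. A direct pointwise argument shows that $\mathrm{rk}\,\phi(y') \leq n-1$ if and only if $\psi(y') = 0$: any such rank drop forces the image of $\phi$ at $y'$, which is isotropic of dimension $\leq n-1$ and contains the $(n-1)$-dimensional subspace $\phi(C)_{y'}$, to coincide with $\phi(C)_{y'}$, and this is equivalent to $\phi|_\mathcal{K}(y') \in \phi(C)_{y'}$.

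For (i), this locally presents $Y_{\leq n-1}$ as the zero scheme of the rank-$2$ section $\psi$, hence cut out by $2$ equations; combined with $(*)$ giving codimension $\geq 2$, these form a regular sequence, so $Y_{\leq n-1}$ and its open subset $Y_{n-1}$ are local complete intersections of codimension exactly $2$ with normal bundle $\mathcal{K}^* \otimes \mathcal{W}|_{Y_{n-1}}$. For (ii), both $\mathrm{Im}\,\phi \cong F$ (locally free, hence reflexive) and $\ker \phi^*$ (the kernel of a morphism of locally free sheaves on the smooth variety $Y$, hence reflexive) are reflexive subsheaves of $E$, and they coincide on the open set $U \subset Y$ where $\phi$ has rank $n$ (there $\mathrm{Im}\,\phi$ is a Lagrangian subbundle, equal to its orthogonal $\ker \phi^*$). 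Since $(*)$ gives $\mathrm{codim}(Y \setminus U) \geq 2$, Hartogs's lemma for reflexive sheaves on a smooth variety forces $\mathrm{Im}\,\phi = \ker \phi^*$ as subsheaves of $E$. This produces the exact sequence $0 \to F \to E \to F^* \to \mathcal{G} \to 0$ with $\mathcal{G} := \mathrm{coker}\,\phi^*$ supported on $Y_{\leq n-1}$. Restricting to $Y_{n-1}$, the subbundle embedding $\phi|_C$ gives a surjection $\phi^*|_C : E \to C^*$, so the rank-$(n-1)$ image $\mathrm{Im}\,\phi^*$ projects isomorphically onto $C^*$ there, and $\mathcal{G}|_{Y_{n-1}} \cong \mathcal{K}^*$ is a line bundle.

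Part (iii) is then immediate from the two preceding steps: $N_{Y_{n-1}/Y} = \mathcal{K}^* \otimes \mathcal{W}|_{Y_{n-1}}$ and $\mathcal{G}|_{Y_{n-1}} \cong \mathcal{K}^*$, so taking $\mathcal{E}_2 := \mathcal{W}^*$ (rank $2$ with trivial determinant since $\det \mathcal{W} = \mathcal{O}$) gives $\mathcal{H}om(\mathcal{E}_2, \mathcal{G}) = \mathcal{W} \otimes \mathcal{K}^* \cong N_{Y_{n-1}/Y}$. The main obstacle I anticipate is checking that the scheme structure on $Y_{\leq n-1}$ defined by the section $\psi$ matches the one intrinsically given by the ideal of $n \times n$ minors of $\phi$; this should follow from the local Lagrangian normal form used in Lemma \ref{lewarmrank}, where the rank-$\leq n-1$ condition reduces precisely to the vanishing of two coordinates modulo the isotropy relations. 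The Hartogs step in (ii) is the one place where hypothesis $(*)$ is used in an essential way.
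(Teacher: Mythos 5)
Your proposal is correct, and for parts (i) and (iii) it takes a genuinely different route from the paper, while part (ii) is essentially the paper's own argument. The paper proves (i) by locally trivializing $(F,E,\langle\,,\,\rangle)$ and invoking Lemma \ref{lewarmrank}: the stratum $M_{{\rm lag},n-1}$ is smooth of codimension $2$ and contained in the smooth locus of $M_{\rm lag}$, hence a local complete intersection, and (*) makes the pulled-back equations a regular sequence; (iii) is then obtained by a separate intrinsic computation, namely the differential $d\phi_y: T_{Y,y}\rightarrow {\rm Hom}({\rm Ker}\,\phi^*_y,{\rm Coker}\,\phi^*_y)$, whose image lies in ${\rm Hom}(\mathcal{E}_{2,y},\mathcal{G}_y)$ because $\phi^*\circ\phi=0$, treated first when $Y_{n-1}$ is smooth and then by local trivialization. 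You instead exhibit $Y_{\leq n-1}$ locally as the zero scheme of the section $\psi$ of the rank-$2$ bundle $\mathcal{H}om(\mathcal{K},\mathcal{W})$, which yields (i), the normal bundle, and — after the identification $\mathcal{G}\mid_{Y_{n-1}}\cong\mathcal{K}^*$, whose verification via the splitting $F^*={\rm Im}\,\phi^*\oplus{\rm ann}(C)$ on $Y_{n-1}$ is correct — statement (iii) in one stroke via the Koszul presentation, valid directly at the lci level with no smoothness assumption on $Y_{n-1}$; note that your $\mathcal{E}_2=\mathcal{W}^*$ agrees with the paper's intrinsic $\mathcal{E}_2=({\rm Im}\,\phi)^{\perp}/{\rm Im}\,\phi$ on $Y_{n-1}$, since on $Y_{n-1}$ one has $\phi(C)={\rm Im}\,\phi$ (so $\mathcal{W}\mid_{Y_{n-1}}$ is independent of the chosen splitting) and the induced symplectic form makes $\mathcal{W}$ self-dual. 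Your (ii) — equality of the reflexive subsheaves ${\rm Im}\,\phi$ and ${\rm Ker}\,\phi^*$ of $E$ across the codimension $\geq 2$ locus — is the same Hartogs argument the paper performs by extending the section $\beta$ of $F$. The scheme-structure point you flag (zero scheme of $\psi$ versus the determinantal ideal of maximal minors) is genuine but resolves exactly as you indicate: after reduction by the invertible $(n-1)\times(n-1)$ block coming from $\phi\mid_C$, the ideal of $n\times n$ minors is generated by the entries of the induced map $\mathcal{K}\rightarrow E/\phi(C)$, which by isotropy factors through $\mathcal{W}$, so it coincides with the ideal of $\psi$; the paper is no more explicit on this point. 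What each approach buys: the universal model $M_{\rm lag}$ also records the geometry of the deeper rank strata, which the paper uses elsewhere (cf. Remark \ref{remageoranksup}), whereas your section-theoretic construction is more self-contained, gives the normal bundle without a smoothness reduction, and makes transparent exactly where hypothesis (*) enters.
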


In (\ref{eqexact}), $\phi^*$ denotes the transpose of $\phi$, which is defined using the self-duality of  $E$ given  by $\langle\,,\,\rangle$.
\begin{proof} Statement (i) follows,  by local trivialization of the vector bundles $F$ and $E$ (equipped with its symplectic structure), from Lemma \ref{lewarmrank}, (ii) and (iii), which say that $M_{{\rm lag},n-1}$ is a smooth codimension $2$ locally closed subvariety of $M_{\rm lag}$ and is  contained in its smooth locus, hence is a local complete intersection of codimension $2$.

(ii) As the image of $\phi$ is isotropic, we have $\phi^*\circ \phi=0$. Furthermore, at any point where $\phi$ has rank $n$, we  have ${\rm Ker}\,\phi^*=({\rm Im}\,\phi)^{\perp}={\rm Im}\,\phi$ hence the sequence
\begin{eqnarray}\label{eqencoreex} 0\rightarrow F\stackrel{\phi}{\rightarrow} E\stackrel{\phi^*}{\rightarrow} F^*\end{eqnarray}
is exact on the right and at the middle on the Zariski open set $Y\setminus Y_{\leq n-1}$ of $Y$. Under  assumption (*), this implies that the sequence remains exact at the middle everywhere on $Y$. Indeed, let $\alpha$ be a local section of ${\rm Ker}\,\phi^*$ defined on a Zariski open set $U$ of $Y$. Then, on $U\setminus (Y_{\leq n-1}\cap U)$, $\alpha=\phi(\beta)$ by the exactness of the sequence (\ref{eqencoreex}) on $U \setminus (Y_{\leq n-1}\cap U)$. As $U$ is smooth and $Y_{\leq n-1}\cap U$ has codimension $\geq 2$ in $U$, $\beta $ extends to a section $\tilde{\beta}$ of $F$ on $U$ and
$\alpha=\phi(\tilde{\beta})$. This proves the exactness of (\ref{eqexact}) in the second term. Finally, the cokernel of $\phi^*$ is a line bundle supported on $Y_{n-1}$,  because   it is isomorphic to the cokernel of the rank $n-1$ morphism
$$ E_{\mid Y_{n-1}}\stackrel{\phi^*_{\mid Y_{n-1}}}{\rightarrow} F^*_{\mid Y_{n-1}}$$
of vector bundles on $Y_{n-1}$.

(iii) The vector bundle $\mathcal{E}_2$ on $Y_{n-1}$ is defined as follows: the morphism $\phi_{\mid  Y_{n-1}}$ has rank $n-1$, hence its image in ${\rm Im}\,(\phi_{\mid  Y_{n-1}})\subset E_{\mid Y_{n-1}}$ is a vector subbundle of rank $n-1$, which is totally isotropic for $\langle\,,\,\rangle$. We define $\mathcal{E}_2$ as $({\rm Im}\,\phi)^{\perp}/{\rm Im}\,\phi={\rm Ker}\,\phi^*/{\rm Im}\,\phi$. The fact that $\mathcal{E}_2$ has trivial determinant follows from the fact $\langle\,,\,\rangle$ induces a nondegenerate skew-symmetric pairing on $\mathcal{E}_2$, which trivializes its determinant. It remains to prove that the normal bundle of $Y_{n-1}$ in $Y$ is isomorphic to $\mathcal{H}om(\mathcal{E}_2,\mathcal{G})$. We first do the case where $Y_{n-1}$ is smooth of codimension $2$. At any point $y\in Y_{n-1}$, we have a well-defined morphism
$$d\phi_y:T_{Y,y}\rightarrow {\rm Hom}({\rm Ker}\,\phi^*_y,{\rm Coker}\,\phi^*_y)$$
whose kernel is the tangent space to $Y_{n-1}$ at $y$. It remains to see that the image is contained in
$${\rm Hom}({\rm Ker}\,\phi^*_y/{\rm Im}\,\phi_y,{\rm Coker}\,\phi^*_y)={\rm Hom}(\mathcal{E}_{2,y},\mathcal{G}_y).$$
This follows immediately from the fact that $\phi^*\circ \phi=0$ on $Y$. As ${\rm Hom}(\mathcal{E}_{2,y},\mathcal{G}_y)$ has dimension $2$,  we get from this construction  a canonical isomorphism $N_{Y_{n-1}/Y}\cong \mathcal{H}om(\mathcal{E}_{2},\mathcal{G})$ in the case where $Y_{n-1}\subset Y$ is smooth of codimension $2$. The general case follows by local trivializations using Lemma \ref{lewarmrank}(ii), (iii).
\end{proof}

\begin{coro}\label{corocrux}  Under the same assumptions as in Proposition \ref{pronullC}, consider the following combination $C$ of total  Chern classes $$C=c(E)c(F)^{-1}c(F^*)^{-1}.$$ Then, if the line bundle $\mathcal{G}$ is trivial on  $Y_{n-1}$, $C_i$ vanishes in  ${\rm CH}(Y\setminus Y_{\leq n-2}) $ for $i\geq 3$.
\end{coro}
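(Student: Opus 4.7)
The first step is to read the exact sequence (\ref{eqexact}) of Proposition \ref{pronullC}(ii) as a length-$3$ global locally free resolution of the coherent sheaf $\mathcal{G}$. Multiplicativity of the total Chern class on such a resolution (Whitney's formula, viewed as a group homomorphism from $K$-theory to the multiplicative group $1+\bigoplus_{i\geq 1}{\rm CH}^i$) gives $c(F)\,c(F^*)=c(E)\,c(\mathcal{G})$ in ${\rm CH}(Y)$, and therefore $C=c(\mathcal{G})^{-1}$. The corollary thus reduces to showing that, on $Y':=Y\setminus Y_{\leq n-2}$, the class $c(\mathcal{G})^{-1}$ has no components in codimension $\geq 3$.

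On $Y'$, Proposition \ref{pronullC}(i)--(iii) together with the triviality assumption on $\mathcal{G}$ identifies $\mathcal{G}$ with $i_*\mathcal{O}_{Y_{n-1}}$, where $i\colon Y_{n-1}\hookrightarrow Y'$ is a smooth local complete intersection of codimension $2$ with normal bundle $N=\mathcal{E}_2^*$. Since $\mathcal{E}_2$ has trivial determinant, $c_1(N)=0$.

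The core computation is local: on a Zariski open $U\subset Y'$ on which $Y_{n-1}\cap U$ is the zero locus of a section of a rank-$2$ bundle $V$ with $V|_{Y_{n-1}\cap U}=N$, the Koszul resolution $0\to\det V^*\to V^*\to\mathcal{O}_U\to\mathcal{G}|_U\to 0$ gives
$$c(\mathcal{G})^{-1}|_U\;=\;\frac{1+c_1(V^*)+c_2(V^*)}{1+c_1(V^*)}\;=\;1+c_2(V^*)\sum_{k\geq 0}\bigl(-c_1(V^*)\bigr)^k.$$
Every component in codimension $k\geq 3$ therefore has $c_1(V^*)\cdot c_2(V^*)$ as a factor, and the self-intersection formula yields $c_1(V^*)\cdot c_2(V^*)=c_1(V^*)\cdot[Y_{n-1}\cap U]=i_*c_1(N^*)=0$ because $c_1(N)=0$. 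Hence locally $c(\mathcal{G})^{-1}|_U=1+[Y_{n-1}\cap U]$, with no contribution above codimension~$2$.

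The main obstacle is upgrading this local identity to the corresponding equality in ${\rm CH}(Y')$, since Chow classes are not in general determined by their restrictions to a Zariski open cover. I would bypass the issue by computing ${\rm ch}(\mathcal{G})$ globally via Grothendieck-Riemann-Roch, which gives ${\rm ch}(\mathcal{G})=i_*{\rm Td}(N)^{-1}$. The vanishing $c_1(N)=0$ makes ${\rm Td}(N)^{-1}$ a power series in $c_2(N)$ alone, and the self-intersection identity $i_*c_2(N)^k=[Y_{n-1}]^{k+1}$ turns ${\rm ch}(\mathcal{G})$ into an explicit power series in $y:=[Y_{n-1}]$. A short verification with Newton's identities (legitimate in $\mathbb{Q}$-coefficients, where Chern classes are uniquely determined by the Chern character) shows that this is precisely the Chern character associated with the total Chern class $(1+y)^{-1}$. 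Hence $c(\mathcal{G})=(1+y)^{-1}$ globally in ${\rm CH}(Y')$ and $C=1+[Y_{n-1}]$, whose components in codimension $\geq 3$ all vanish.
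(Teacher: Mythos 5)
Your argument is correct, and it reaches the same first reduction as the paper but then globalizes by a genuinely different route. Like the paper, you use the four-term exact sequence of Proposition \ref{pronullC}(ii) and Whitney to identify $C$ with the inverse total Chern class (the paper writes this as the Segre class $s(\mathcal{G})$), and you use the triviality of $\mathcal{G}$ together with Proposition \ref{pronullC}(iii) to get $c_1(N_{Y_{n-1}/Y})=0$. Where the paper proceeds via Lemma \ref{legenCS}: it realizes $Y_{n-1}$ (after replacing $Y\setminus Y_{\leq n-2}$ by an affine bundle over it, using the Jouanolou trick to make the Serre-type extension $0\to\mathcal{O}\to M\to\mathcal{I}_{Y_{n-1}}\to 0$ exist globally) as the zero locus of a section of a rank $2$ bundle with trivial determinant, and then concludes by Whitney alone, you instead compute ${\rm ch}(\mathcal{G})$ globally by Riemann--Roch for the codimension $2$ regular embedding, convert $c_2(N)^k$ into powers of $[Y_{n-1}]$ by the self-intersection formula, and recover $c(\mathcal{G})$ from ${\rm ch}(\mathcal{G})$ via the $\exp/\log$ (Newton) correspondence, which is legitimate since the paper works with $\mathbb{Q}$-coefficients throughout. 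Your route avoids the Jouanolou/Serre construction entirely and yields the stronger explicit identity $C=1+[Y_{n-1}]$ on $Y\setminus Y_{\leq n-2}$ (so also $C_1=0$ and $C_2=[Y_{n-1}]$), while the paper's route is more elementary, using only the Whitney formula and no Riemann--Roch input. One small correction: Proposition \ref{pronullC} only gives that $Y_{n-1}$ is a local complete intersection, not that it is smooth, so you should not call the embedding a ``smooth'' lci; this is harmless, because the statement ${\rm ch}(i_*\mathcal{O}_{Y_{n-1}})=i_*\bigl({\rm Td}(N_{Y_{n-1}/Y})^{-1}\bigr)$ holds for any regular embedding into a smooth quasi-projective variety (Fulton's Riemann--Roch for singular schemes, with $\mathbb{Q}$-coefficients), and the self-intersection formula likewise requires only regularity of the embedding, so none of your steps actually uses smoothness of $Y_{n-1}$.
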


\begin{proof} Indeed, by the exact sequence (\ref{eqexact}), one has $C=s(\mathcal{G})$, where $s$ denotes the total Segre class. As $\mathcal{G}$ is trivial on $Y_{ n-1} $ by assumption,  and $Y_{n-1} $ is a local complete intersection of codimension $2$ by Proposition \ref{pronullC}(i),  the normal bundle $N_{Y_{n-1}/Y}$ has trivial determinant by Proposition \ref{pronullC}(i).  Using the fact that $\mathcal{G}\cong \mathcal{O}_{Y_{n-1}}$, the corollary follows  from  Lemma \ref{legenCS} below applied  to the closed subvariety $Y_{n-1}\subset Y\setminus Y_{\leq n-2}$.
\end{proof}
\begin{lemm}\label{legenCS} Let $Y$ be a smooth variety and $Z\subset Y$ be a local complete intersection of codimension $2$. Assume that the determinant of $N_{Z/Y}$ is trivial. Then  the total Segre class $s(\mathcal{O}_{Z})\in{\rm CH}(Y)$   satisfies $s_i=0$ for $i\geq 3$.
\end{lemm}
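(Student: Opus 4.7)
The plan is to prove the sharper identity $s(\mathcal{O}_Z)=1+[Z]$ in ${\rm CH}_{\mathbb{Q}}(Y)$; since $[Z]\in{\rm CH}^2(Y)$, this immediately forces $s_i(\mathcal{O}_Z)=0$ for every $i\geq 3$.

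First I would apply Grothendieck--Riemann--Roch to the regular closed embedding $i\colon Z\hookrightarrow Y$, giving
$$
{\rm ch}(\mathcal{O}_Z)\;=\;i_*\bigl({\rm td}(N_{Z/Y})^{-1}\bigr)\;\in\;{\rm CH}_{\mathbb{Q}}(Y).
$$
The hypothesis $c_1(N_{Z/Y})=0$ means the Chern roots of $N:=N_{Z/Y}$ are $\pm\alpha$ with $\alpha^2=-c_2(N)$, and a direct computation of the Todd class of a rank-$2$ bundle with vanishing first Chern class gives
$$
{\rm td}(N)^{-1}\;=\;\frac{2\cosh\alpha-2}{\alpha^2}\;=\;\sum_{m\geq 0}\frac{2(-1)^m}{(2m+2)!}\,c_2(N)^m.
$$
Combining this with the self-intersection formula for codimension-$2$ regular embeddings, $[Z]^{m+1}=i_*\bigl(c_2(N)^m\bigr)$, yields
$$
{\rm ch}(\mathcal{O}_Z)\;=\;\sum_{n\geq 1}\frac{2(-1)^{n-1}}{(2n)!}\,[Z]^n\;\in\;{\rm CH}_{\mathbb{Q}}(Y).
$$

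Next, I would consider the formal rank-$0$ virtual element $\xi$ in ${\rm CH}_{\mathbb{Q}}(Y)$ whose total Chern class is $(1+[Z])^{-1}$, that is, $c_{2k}(\xi)=(-[Z])^k$ and $c_{\rm odd}(\xi)=0$. A direct Newton's-identity computation on these Chern classes gives its power sums $p_{2m}(\xi)=2(-1)^{m-1}[Z]^m$ and $p_{\rm odd}(\xi)=0$, so
$$
{\rm ch}(\xi)\;=\;\sum_{m\geq 1}\frac{p_{2m}(\xi)}{(2m)!}\;=\;\sum_{m\geq 1}\frac{2(-1)^{m-1}}{(2m)!}\,[Z]^m,
$$
which coincides exactly with the expression for ${\rm ch}(\mathcal{O}_Z)$ obtained above. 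Since rank together with the total Chern character determines the total Chern class in ${\rm CH}_{\mathbb{Q}}(Y)$ grade by grade via Newton's identities, this forces $c(\mathcal{O}_Z)=(1+[Z])^{-1}$ and hence $s(\mathcal{O}_Z)=1+[Z]$.

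The main obstacle is the power-series matching at the heart of the argument: one has to verify that the GRR-derived expression $i_*({\rm td}(N)^{-1})$, after being rewritten as a series in $[Z]$ via the self-intersection formula, agrees coefficient by coefficient with the Newton-computed Chern character of the formal inverse $(1+[Z])^{-1}$. Both sides reduce to the elementary identity $(2\cosh\alpha-2)/\alpha^2=\sum_{m\geq 0}2\alpha^{2m}/(2m+2)!$ together with the Newton power-sum calculation $p_{2m}\bigl((1+x)^{-1}\bigr)=2(-1)^{m-1}x^m$, so the comparison is a formal computation.
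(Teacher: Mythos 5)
Your argument is correct, but it follows a genuinely different route from the paper. The paper reduces to the case where $Z$ is the zero scheme of a section of a rank~$2$ bundle $M$ with trivial determinant: it first notes that the local-to-global map ${\rm Ext}^1(\mathcal{I}_Z,\mathcal{O}_Y)\rightarrow H^0(Z,\mathcal{E}xt^1(\mathcal{I}_Z,\mathcal{O}_Y))\cong H^0(Z,\mathcal{O}_Z)$ is surjective when $Y$ is affine (Serre construction, which uses the actual triviality of $\det N_{Z/Y}$, not just the vanishing of its class), handles the general case by Jouanolou's trick, and then concludes from the Koszul-type sequences $0\rightarrow \mathcal{I}_Z\rightarrow\mathcal{O}_Y\rightarrow\mathcal{O}_Z\rightarrow 0$ and $0\rightarrow\mathcal{O}_Y\rightarrow M\rightarrow\mathcal{I}_Z\rightarrow 0$ that $s(\mathcal{O}_Z)=c(M)$, which kills $s_i$ for $i\geq 3$ since $M$ has rank $2$. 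You instead compute ${\rm ch}(\mathcal{O}_Z)$ directly by Riemann--Roch for the regular embedding $i:Z\hookrightarrow Y$ (valid for $Z$ an lci, possibly singular or non-reduced, since a closed embedding is projective; this is the lci Riemann--Roch of SGA6/Fulton, Ch.~18), rewrite $i_*({\rm td}(N_{Z/Y})^{-1})$ as a series in $[Z]$ via the self-intersection formula, and then recover $c(\mathcal{O}_Z)$ from the Chern character by Newton's identities over $\mathbb{Q}$ --- all steps I have checked are correct, including the power-series matching. What your route buys: a sharper conclusion, namely $s(\mathcal{O}_Z)=1+[Z]$ exactly, under the weaker hypothesis $c_1(N_{Z/Y})=0$ in ${\rm CH}^1(Z)_{\mathbb{Q}}$, with no need for the Serre construction or the Jouanolou trick. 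What it costs: the Riemann--Roch machinery for regular embeddings (localized Chern characters), which is substantially heavier than the paper's Whitney-formula argument, and the implicit standing assumption (shared by the paper, and satisfied in its applications) that $Y$ is such that $\mathcal{O}_Z$ defines a class in $K^0(Y)$ so that $c(\mathcal{O}_Z)$ and $s(\mathcal{O}_Z)$ make sense, e.g.\ $Y$ smooth quasi-projective.
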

\begin{proof} The obvious case is when $Z\subset Y$ is the zero locus of  a section of a rank $2$ vector bundle $M$ on $Y$ with trivial determinant. Indeed, we have in this case the two exact sequences
\begin{eqnarray} \label{eq1triv} 0\rightarrow \mathcal{I}_Z\rightarrow \mathcal{O}_Y\rightarrow \mathcal{O}_Z\rightarrow 0,\\
\label{eq2triv}
0\rightarrow \mathcal{O}_Y\rightarrow M\rightarrow \mathcal{I}_Z\rightarrow 0.
\end{eqnarray}
From (\ref{eq1triv}), we deduce that $s(\mathcal{O}_Z)=c(\mathcal{I}_Z)$ and from (\ref{eq2triv}), we deduce that $c(\mathcal{I}_Z)=c(M)$. Finally $c_i(M)=0$ for $i\geq 3$ since $M$ has rank $2$.

The general case follows from this case. There exists a rank $2$ vector bundle $M$ on $Y$ with the desired property if the natural morphism
\begin{eqnarray}\label{eqexactlocalglobal} {\rm Ext}^1(\mathcal{I}_Z,\mathcal{O}_Y)\rightarrow H^0(Z,\mathcal{E}xt^1(\mathcal{I}_Z,\mathcal{O}_Y))=H^0(Z,\mathcal{O}_Z)\end{eqnarray}
 is surjective. Here the isomorphism
$\mathcal{E}xt^1(\mathcal{I}_Z,\mathcal{O}_Y)\cong \mathcal{O}_Z$
is given by  the triviality of the determinant of $N_{Z/Y}$.
 The surjectivity of the local to global map (\ref{eqexactlocalglobal}) is satisfied  if $Y$ is affine, hence the lemma is proved in the affine case. If $Y$ is not affine, we can use the Jouanolou trick \cite[Lemme 1.5]{jouanolou}  and replace $Y$  by an  affine bundle $A_Y$ over $Y$, which has isomorphic Chow groups since it is fibered into affine spaces over $Y$ and whose total space is affine. The result then applies to $A_Z\subset A_Y$.
\end{proof}

\subsection{The case of Lagrangian fibrations}
We study in this section the Lagrangian morphism of vector bundles given in Example \ref{exlag}, namely $\phi=f^*$ for some Lagrangian fibration $f:X\rightarrow B$, where $B$ is normal. More precisely, we will consider the restriction $f:X'\rightarrow B'$, where $B'$ is the smooth locus of $B$ and $X':=f^{-1}(B')$. As before, we denote by $X'_i$ the locally closed algebraic subset of $X'$ where $f^*$ has rank $i$. We will  give  a  second  proof of Theorem \ref{theonew} for $i=2$, based on the following

\begin{prop}\label{propfacilechouette}
 There exists a Zariski open set  $B''\subset  B'$ such that $B\setminus B''$ has codimension $\geq 2$ in $B'$ (hence in $B$) and contains  $f(X'_{n-2})$,  with the following property. Let   $X'':=f^{-1}(B'')$, so $\phi$ has rank $\geq n-1$ everywhere on $X''$ since $X''\cap X'_{n-2}=\emptyset$. Then the  line bundle $\mathcal{G}$ on $X''_{n-1}$ constructed in  Proposition \ref{pronullC}(ii)  is trivial along the fibers of $f_{\mid X''_{n-1}}: X''_{n-1}\rightarrow B''$.
\end{prop}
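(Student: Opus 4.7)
The plan is to delete from $B'$ a codimension-$\geq 2$ subset containing $f(X'_{n-2})$, and then to identify $\mathcal{G}$ along each fiber of $f|_{X''_{n-1}}$ with the constant quotient line $N_{\Delta/B,b}$ of the discriminant divisor. Concretely, let $\Delta\subset B'$ be the union of the codimension-one components of $\overline{f(X'_{n-1})}$, and let $B''\subset B'$ be the complement of: (a) the singular locus of $\Delta$; (b) $\overline{f(X'_{n-2})}$, whose dimension is at most $n-2$ by Sard's theorem; and (c) the projections $\overline{f(Z)}$ of those irreducible components $Z$ of $X'_{n-1}$ that do not dominate any component of $\Delta$. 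Each of (a), (b), (c) has codimension $\geq 2$ in $B'$, and hence in $B$ since $B$ is normal. By construction $f^{-1}(B'')\cap X'_{n-2} = \emptyset$, so Proposition~\ref{pronullC} applies on $X'' = f^{-1}(B'')$ and produces the line bundle $\mathcal{G}$ on $X''_{n-1}$.

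Next I would reduce triviality of $\mathcal{G}|_{X''_{n-1,b}}$ to a statement about $\mathrm{Im}(df)$. Unwinding the construction of $\phi^*$ via the symplectic isomorphism $\sigma:\Omega_X\to T_X$, one finds $\phi^* = df\circ\sigma$, so $\mathcal{G} = \mathrm{coker}(df:T_X\to f^*T_B)$ on $X''$. For $x\in X''_{n-1,b}$ this gives $\mathcal{G}_x = T_{B,b}/H_x$, where $H_x := \mathrm{Im}(df_x)\subset T_{B,b}$ is a hyperplane. Since $f^*T_B$ is canonically trivial along $X_b$ with fiber $T_{B,b}$, proving $\mathcal{G}|_{X''_{n-1,b}}$ trivial is equivalent to showing that $H_x$ does not depend on $x\in X''_{n-1,b}$.

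The geometric heart of the argument is the claim $H_x = T_b\Delta$ for every $x\in X''_{n-1,b}$. Given an irreducible component $C$ of $X''_{n-1,b}$, pick an irreducible component $Z$ of $X''_{n-1}$ containing $C$; by (c), $Z$ dominates a component of $\Delta$ passing through $b$. Since $b$ is a smooth point of $\Delta$ by (a), the restriction $f|_Z:Z\to\Delta$ is smooth at a general point $x_0\in Z\cap X_b$, so $T_b\Delta = df_{x_0}(T_{Z,x_0})\subset H_{x_0}$, and a dimension count forces equality. The assignment $x\mapsto H_x$ is a morphism from $X''_{n-1}$ to the Grassmannian bundle $\mathrm{Gr}(n-1,f^*T_B)$ over $B''$, so its restriction to $C$ is a morphism from the irreducible variety $C$ to $\mathrm{Gr}(n-1,T_{B,b})$ constant on a dense open subset, hence globally constant. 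Thus $H_x = T_b\Delta$ on all of $X''_{n-1,b}$, and $\mathcal{G}|_{X''_{n-1,b}}$ is canonically the trivial line bundle with fiber $N_{\Delta/B,b}$.

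The hard part will be the bookkeeping behind (c): one must check that each component of $X'_{n-1}$ failing to dominate a divisorial component of $\Delta$ projects to a subvariety of $B$ of codimension $\geq 2$, and, relatedly, that after removing (c) the remainder of $\Delta$ is genuinely a divisor whose singular locus has codimension $\geq 2$. This is a direct application of upper semicontinuity of fiber dimension for $f|_{X'_{n-1}}$ and requires no further input from Lagrangian geometry.
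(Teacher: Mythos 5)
Your overall strategy is the same as the paper's --- shrink the base by codimension-$2$ removals and show that ${\rm Im}(df_x)\subset T_{B,b}$ is the fixed hyperplane $T_b\Delta$ along each fiber of $f_{\mid X''_{n-1}}$ --- but two steps are genuinely incomplete. First, your list (a)--(c) of removed loci is too short. To know that $f_{\mid Z}:Z\to\Delta$ is submersive at a general point of \emph{each} component $C$ of the fiber over an \emph{arbitrary} $b\in B''\cap\Delta$, generic smoothness is not enough: the locus where $f$ restricted to $Z$ (or to a desingularization of $X'_{n-1,{\rm red}}$) has rank $<n-1$ maps, by Sard, to a subset of $\Delta$ of dimension $\leq n-2$, and this image must also be deleted from $B''$. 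Over a point of that image, a whole component of $Z\cap X_b$ can lie inside the non-submersive locus, and then the "constant on a dense open subset of $C$" step has no dense open subset to start from. This is exactly the extra removal the paper performs (the set $\tilde f_{n-1}(Z)$ in its notation) before running the tangent-space comparison; after it, the comparison ${\rm Im}(f_{*})\supset T_{\Delta,b}$ with equality by dimension count works at \emph{every} point of the fiber, not just general ones, which also spares the irreducibility argument.

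Second, the asserted equivalence "triviality of $\mathcal{G}_{\mid X''_{n-1,b}}$ is equivalent to constancy of $H_x$" is not justified as stated, because $X''_{n-1}$ --- a codimension-$2$ degeneracy locus, a local complete intersection --- and its fibers may be non-reduced. Constancy of the kernel hyperplane at closed points only shows that the surjection $T_{B,b}\otimes\mathcal{O}\to\mathcal{G}$ factors through $(T_{B,b}/H)\otimes\mathcal{O}$ on the \emph{reduced} fiber; on a non-reduced fiber the factorization can fail, since a section vanishing at all closed points need not vanish. The paper flags precisely this point and closes it with Lemma \ref{leidiot}: $\mathcal{G}$ is a quotient of $f^*T_{B'}$, hence globally generated along the fibers, so the induced morphism to projective space has zero-dimensional image, and a line bundle on a zero-dimensional scheme is trivial. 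Your classifying morphism $X''_{n-1}\to {\rm Gr}(n-1,f^*T_B)$ could be used to the same effect (a constant set-theoretic image on the scheme-theoretic fiber forces a zero-dimensional scheme-theoretic image), but that argument is not in your write-up; without it you only trivialize $\mathcal{G}$ on $(X''_{n-1,b})_{\rm red}$, which is not sufficient for the later application via Corollary \ref{corocrux}.
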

\begin{proof}
 Let $\tau:\widetilde{X}'_{n-1,{\rm red}}\rightarrow X'_{n-1}$ be a desingularization of the locus $X'_{n-1}$ equipped with its reduced structure and let $$\tilde{f}_{n-1}:=f\circ \tau:\widetilde{X}'_{n-1,{\rm red}}\rightarrow B'.$$ Using  Sard's theorem, the locus $Z\subset\widetilde{X}'_{n-1,{\rm red}}$ where $\tilde{f}_{n-1}$ has rank $<n-1$  satisfies ${\rm dim}\,\tilde{f}_{n-1}(Z)\leq n-2$. Similarly, the closed algebraic subset $f(X'_{\leq n-2})$ of $B$  has dimension $\leq n-2$.
Let $B'_1:= B'\setminus (f(X'_{\leq n-2})\cup \tilde{f}_{n-1}(Z))$.
By construction, $\tilde{f}$ has rank exactly $n-1$ on $\widetilde{X}_{n-1,{\rm red}} \cap \tilde{f}^{-1}(B'_1)$. Its image is thus locally a finite union of smooth hypersurfaces in $B'_1$ and, up to removing a codimension $2$ closed algebraic subset of $B'_1$ where this hypersurface has at least two branches, we get the desired Zariski open set $B''$. By construction, the hypersurface $\Delta= \tilde{f}(\widetilde{X}_{n-1,{\rm red}} \cap \tilde{f}^{-1}(B''))$ is a  smooth hypersurface in $B''$ and
$\tilde{f}:\widetilde{X}_{n-1,{\rm red}} \cap \tilde{f}^{-1}(B'')\rightarrow \Delta$ is everywhere of maximal rank $n-1$. Having constructed $B''$, it remains to prove the statement about $\mathcal{G}$.
We now observe that, for any point $y$ in  the fiber $F_b$ of $\tilde{f}$ over $b\in \Delta$, the image of $f_*:T_{X,\tau(y)}\rightarrow T_{B,b}$ has dimension $n-1$ and contains the image of $\tilde{f}_*:T_{\widetilde{X}_{n-1,{\rm red}},y}$, namely $T_{\Delta,b}$. Hence the two spaces are equal and we find that the image of $f_*:T_{X,\tau(y)}\rightarrow T_{B,b}$ is constant, equal to $T_{\Delta,b}$  along $F_b$. This shows that $\mathcal{G}$ is trivial along the fiber $F_b$. This is a priori not enough to prove the desired statement since we worked only with a desingularization of the reduced structure of ${X}_{n-1}$ and not with ${X}_{n-1}$ itself.
To complete the proof, we use the following
\begin{lemm}\label{leidiot} Let $Z$ be projective scheme and $L$ be a line bundle on $Z$. Assume that

(i) $L$ is generated by global sections.

(ii) The pull-back of $L$ to a desingularization $\widetilde{Z}_{\rm red}$ of the underlying reduced subscheme $Z_{\rm red}$ is trivial.

Then $L$ is trivial.
\end{lemm}
\begin{proof} As $L $ is generated by global sections, it induces a morphism $g:Z\rightarrow \mathbb{P}^N$ such that $L$ is isomorphic to $g^*\mathcal{O}_{\mathbb{P}^N}(1)$. As the pull-back of $L$ to  $\widetilde{Z}_{\rm red}$ is trivial, this morphism has image supported on a finite number of points. Any line bundle  on a $0$-dimensional scheme is trivial and this finishes the proof.
\end{proof}
Lemma \ref{leidiot} applies in our situation since the line bundle $\mathcal{G}$ on $X'_{n-1}$ is by definition a quotient of the vector bundle $f^*\Omega_{B'}^*$, hence is generated by global sections along the fibers of $f:X'_{n-1}\rightarrow B'$.
\end{proof}
Combining the results of the previous sections, we get another proof   of Theorem \ref{theonew}(i) for $i=2$.
\begin{coro} \label{thoeversionfaible} Let $f:X\rightarrow B$ be a Lagrangian fibration of a hyper-K\"{a}hler manifold, with $B$ normal. Let $L$ be the Lagrangian line bundle. Assume that there is no divisor in $X$ made of non-reduced fibers. Then for $i\geq 2$, the Chern classes $c_{2i}(X)\in {\rm CH}^{2i}(X)$ are supported over a closed algebraic subset of codimension $\geq 2$ in $B$. In particular
\begin{eqnarray} \label{eqnewauckland}  L^{n-1}c_i(X)=0\,\,{\rm in}\,\,{\rm CH}(X)\,\,{\rm for}\,\,i\geq 4.
\end{eqnarray}
\end{coro}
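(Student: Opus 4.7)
The plan is to apply Corollary \ref{corocrux} after refining the open set $B''\subset B$ of Proposition \ref{propfacilechouette} so that the line bundle $\mathcal{G}$ becomes globally trivial on the corank-one locus, rather than only fiberwise trivial. Starting from $B''$, I would further shrink $B$ by a closed subset of codimension $\geq 2$: after restricting to an open over which $\tilde{f}:X''_{n-1}\to\Delta$ is flat with reduced, geometrically connected fibers (a condition achieved by removing a codim-$\geq 1$ subset of $\Delta$, i.e.\ codim-$\geq 2$ in $B$), cohomology and base change applied to the fiberwise-trivial $\mathcal{G}$ gives a line bundle $N:=\tilde{f}_*\mathcal{G}$ on $\Delta$ together with a canonical isomorphism $\tilde{f}^*N\xrightarrow{\sim}\mathcal{G}|_{X''_{n-1}}$. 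Trivializing $N$ over a further Zariski open $\Delta^\circ\subset\Delta$ (by removing a divisor, still codim-$\geq 2$ in $B$) produces an open $B^\circ\subset B$ with $B\setminus B^\circ$ of codimension $\geq 2$, on which $\mathcal{G}|_{X^\circ_{n-1}}$ is genuinely trivial, where $X^\circ:=f^{-1}(B^\circ)$.

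On $X^\circ$ the no-non-reduced-fibers assumption ensures $X^\circ_{\leq n-1}$ has codimension $\geq 2$, so Proposition \ref{pronullC} applies and gives the four-term exact sequence together with $c(\Omega_{X^\circ})=c(f^*\Omega_B)\,c(f^*T_B)\,s(\mathcal{G})$. The triviality of $\mathcal{G}$ on $X^\circ_{n-1}$ then lets Corollary \ref{corocrux} yield $s_i(\mathcal{G})=0$ in $\mathrm{CH}(X^\circ\setminus X^\circ_{\leq n-2})$ for $i\geq 3$, while Sard's theorem gives $\dim f(X^\circ_{\leq n-2})\leq n-2$. Since $c(F)c(F^*)$ has only even-degree components (the Chern roots of $F^*$ being $-\alpha_i$ when those of $F$ are $\alpha_i$), expanding
$$c_{2i}(\Omega_X)=\sum_{k=0}^{i}\,[c(f^*\Omega_B)\,c(f^*T_B)]_{2i-2k}\cdot s_{2k}(\mathcal{G}),$$
every summand for $i\geq 2$ is supported over a codim-$\geq 2$ closed subset of $B$: the $k=0$ term is a pullback from $B$ of degree $2i\geq 4$, the $k=1$ term couples a pullback of degree $2i-2\geq 2$ with $s_2(\mathcal{G})=[X^\circ_{n-1}]$, and the $k\geq 2$ terms vanish outside $X^\circ_{\leq n-2}$.

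Combining with the remainder supported on $f^{-1}(B\setminus B^\circ)\cup f^{-1}(f(X^\circ_{\leq n-2}))$, which lies over codimension $\geq 2$ of $B$, I conclude that $c_{2i}(X)\in\mathrm{CH}(X)$ is supported over a codim-$\geq 2$ closed subset of $B$ for every $i\geq 2$, proving the first assertion. The vanishing $L^{n-1}c_i(X)=0$ for $i\geq 4$ (even) then follows immediately: $L^{n-1}$ is proportional to $f^*(H_B^{n-1})$ for the ample $H_B$ on $B$ with $f^*H_B=\mu L$, and representing $H_B^{n-1}$ as the intersection of $n-1$ general members of $|H_B|$, its restriction to any codim-$\geq 2$ closed subset $W\subset B$ has expected dimension $\leq(n-2)-(n-1)<0$ and so is empty, forcing $L^{n-1}\cdot c_{2i}(X)=0$ in $\mathrm{CH}(X)$.

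The delicate point is the Step 1 base-change argument: passing from the pointwise, fiberwise triviality of $\mathcal{G}$ to a global factorization $\mathcal{G}|_{X''_{n-1}}\cong\tilde{f}^*N$ requires flatness, reducedness, and geometric connectedness of the fibers of $\tilde{f}$, which may only hold after removing additional codim-$\geq 2$ loci from $B$. These refinements are in the same spirit as those of Proposition \ref{propfacilechouette} (which uses Lemma \ref{leidiot} to deduce triviality from triviality on a desingularization of the reduced fibers), but their careful execution is the principal technical obstacle; once settled, the rest of the argument is a clean application of Corollary \ref{corocrux} together with the Chern-class expansion coming from the Lagrangian resolution.
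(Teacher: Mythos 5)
Your proposal is correct and follows essentially the same route as the paper: starting from Proposition \ref{propfacilechouette}, one promotes the fiberwise triviality of $\mathcal{G}$ to genuine triviality over the preimage of a dense open subset of the discriminant (removing only a codimension $\geq 2$ subset of $B$), then applies Corollary \ref{corocrux} together with the Whitney expansion $c(\Omega_X)=c(F)c(F^*)s(\mathcal{G})$ and the observation that $L^{n-1}$ kills classes supported over codimension $\geq 2$ in $B$. The base-change step you single out as delicate is exactly the point the paper disposes of in one sentence (using that $\mathcal{G}$ is globally generated along the fibers), so your more detailed treatment is an elaboration rather than a different argument.
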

\begin{proof} Let $B''$ and $X''$ be as in Proposition \ref{propfacilechouette}.  By our assumption on singular fibers, the locus $X''_{n-1}=X''_{\leq n-1}\subset X''$ where $f: X''\rightarrow B''$ has rank $ n-1$ is a closed algebraic subset of  codimension $2$ in $X''$. Proposition \ref{propfacilechouette} tells that the line bundle $\mathcal{G}$ on $X''_{n-1}$ is trivial on the fibers of $f_{n-1}:X''_{n-1}\rightarrow B''_{n-1}$, so it is trivial on a Zariski open set $f_{n-1}^{-1}(U)$ for some dense Zariski open set $U:=B''_{n-1}\setminus W$ of $B''_{n-1}$. Let $B''':=B''\setminus W$ and $X''':=f^{-1}(B''')$. Then $B'''\subset B$ is the complement of a closed algebraic subset of codimension $\geq 2$. Furthermore, by construction,  the line bundle $\mathcal{G}$  is trivial on $X'''_{n-1}$. We can thus apply Corollary \ref{corocrux} to conclude that $c_{2i}(\Omega_X)$ vanishes in ${\rm CH}(X''')$ for $i\geq 2$.
\end{proof}
\begin{rema}{\rm We will give a different proof of Corollary \ref{thoeversionfaible} in Section \ref{secsupport}. More precisely, Corollary \ref{coroideux} reproves  Corollary \ref{thoeversionfaible} without  the assumption on non-reduced fibers.}
\end{rema}

\subsection{Non-reduced  fibers in codimension 1 \label{secmultiple}}
We show in this section how to get rid of the assumption that there are no non-reduced fibers in codimension $1$. Our goal is to show how, in the case of a Lagrangian fibration of a hyper-K\"ahler manifold of Picard number $2$, we can modify  the morphism (\ref{eqpullback}) over the complement of a closed algebraic subset of codimension $\geq 2$ of the base so as to get a Lagrangian morphism of vector bundles having the property that its locus $X_{n-1}$ of jumping rank has codimension at least $2$, and to which the arguments of the previous section apply.
Note that the existence of a divisor in the base parameterizing non-reduced fibers can occur, as in the following
\begin{ex} \label{exK3} Consider an elliptic  $K3$ surface $f:S\rightarrow \mathbb{P}^1$ with elliptic general fiber $E_t,\,t\in \mathbb{P}^1$. Then for any $n\geq 2$, we have a composed morphism
\begin{eqnarray}f_n: S^{[n]}\rightarrow S^{(n)}\rightarrow (\mathbb{P}^1)^{(n)}=\mathbb{P}^n,
\end{eqnarray}
which gives a Lagrangian fibration with non-reduced fibers over the big diagonal of $(\mathbb{P}^1)^{(n)}$. More precisely, for $n=2$, the  fiber of $f_2$ over $t+t'\in (\mathbb{P}^1)^{(2)},\,t\not=t'$, is isomorphic to  $E_t\times E_{t'}$, and for $t=t'$, the fiber is as a set  the union of  $ E_t^{(2)}$ and another component which is also a $\mathbb{P}^1$-bundle over $E_t$, namely the set of length $2$ subschemes of $S$ supported at one point of $E_t$. The first component is not reduced since near a general point $(x,y),\,x\not= y$ of $E_t^{(2)}$, the morphism $f_2$ locally factors through the morphism $\mathbb{P}^1\times \mathbb{P}^1\rightarrow (\mathbb{P}^1)^{(2)}$ which ramifies over  the diagonal. The second component is reduced because the morphism $f_2:S^{[2]}\rightarrow (\mathbb{P}^1)^{(2)}$ is of rank $2$ at a general point of the divisor parameterizing nonreduced schemes of length $2$. Indeed, considering a general   curve $C\subset S$, the morphism  $f_{2\mid C^{(2)}}: C^{(2)}\rightarrow  (\mathbb{P}^1)^{(2)}$ is  a local analytic  isomorphism at a  point $2c\in C^{(2)}$,  once    $f_{\mid C}: C\rightarrow \mathbb{P}^1$ is a local analytic isomorphism at $c$.
\end{ex}
In this example, there are   non-reduced fibers in codimension $1$ which are not multiple fibers. This however cannot occur when $X$ has Picard number $2$ by the following lemma.

\begin{lemm} \label{rho2projpasdepb}  Let $f: X\rightarrow B$ be a Lagrangian fibration with $X$ projective of Picard number $2$. Then the general non-reduced fibers appearing in codimension $1$ are multiple fibers.
\end{lemm}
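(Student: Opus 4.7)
The plan is to show, under the Picard number $2$ hypothesis, that every prime divisor of $X$ contained in a fiber of $f$ has class proportional to $[L]$, and then to use $f_*\mathcal{O}_X=\mathcal{O}_B$ to force any non-reduced fiber $f^*\Delta$ to be supported on a single prime divisor. First, let $\Delta\subset B$ be an irreducible divisor whose generic fiber under $f$ is non-reduced, and write $f^*\Delta=\sum_i m_i F_i$ as a Cartier divisor on $X$, where the $F_i$ are the distinct prime components of $f^{-1}(\Delta)$ dominating $\Delta$. The goal is to show there is only one $F_i$. Each $F_i$ is disjoint from a general smooth fiber $X_b$ (with $b\in B$ generic, so $b\notin\Delta$), and since $[X_b]$ is a nonzero rational multiple of $[L]^n\in H^{2n}(X,\mathbb{Q})$, this gives $[L]^n\cdot [F_i]=0$ in $H^{2n+2}(X,\mathbb{Q})$. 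Applying the Fujiki formula and using $q(L)=0$, the intersection number $\int_X L^n\cdot F_i\cdot \beta^{n-1}$ is a nonzero constant times $q(L,F_i)\cdot q(L,\beta)^{n-1}$ for any $\beta\in H^2(X,\mathbb{Q})$; picking $\beta$ with $q(L,\beta)\neq 0$ (possible by nondegeneracy of $q$ on $H^2$) yields $q(L,F_i)=0$.

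Next, since $X$ is projective of Picard number $2$, by the Hodge index theorem $q$ restricted to $\mathrm{NS}(X)_\mathbb{Q}$ has signature $(1,1)$ and is therefore nondegenerate; the isotropy of $L$ then forces $L^\perp\cap\mathrm{NS}(X)_\mathbb{Q}=\mathbb{Q}[L]$, so $[F_i]=a_i[L]$ for some $a_i>0$. Since a positive multiple of $L$ equals $f^*H_B$, we deduce $[F_i]\in f^*\mathrm{Pic}(B)_\mathbb{Q}$, and for some integer $N>0$ there is an isomorphism $\mathcal{O}_X(NF_i)\cong f^*\mathcal{O}_B(G_i)$ for some line bundle $G_i$ on $B$. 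Using $f_*\mathcal{O}_X=\mathcal{O}_B$ (which follows from the connectedness of the fibers and the normality of $B$), the projection formula yields $H^0(X,NF_i)=H^0(B,G_i)$, and every element of $|NF_i|$ is the pullback of an element of $|G_i|$. In particular, $NF_i=f^*D_i$ as effective Cartier divisors for some $D_i\in|G_i|$; as $NF_i$ has irreducible support equal to $F_i$, the support of $D_i$ must reduce to a single prime divisor $D_i'\subset B$, and $F_i$ is the unique prime divisor of $X$ whose image under $f$ equals $D_i'$.

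To conclude, since $f(F_i)=\Delta$ for each $i$, we have $D_i'=\Delta$, and the uniqueness just obtained forces all the $F_i$'s to coincide. Hence $f^*\Delta=mF$ for a single prime $F$, with $m\geq 2$ by non-reducedness of the generic fiber over $\Delta$: this is a multiple fiber. The main obstacle is the passage from the numerical statement $[F_i]\in\mathbb{Q}[L]$ to the divisor-theoretic statement that $NF_i$ is literally equal to $f^*D_i$ for a specific $D_i$ on $B$: this requires the stronger fact that the entire linear system $|NF_i|$ is pulled back from $B$, obtained through $f_*\mathcal{O}_X=\mathcal{O}_B$ and the projection formula, together with a careful analysis of the supports of effective divisors to deduce the uniqueness of the dominating component.
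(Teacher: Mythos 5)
Your proof is correct, but it follows a genuinely different route from the paper's. You prove that every prime divisor $F\subset X$ dominating an irreducible divisor $\Delta\subset B$ satisfies $[F]\in\mathbb{Q}_{>0}[L]$ (via disjointness from a general fiber, the Fujiki polarization and the signature $(1,1)$ of $q$ on ${\rm NS}(X)_{\mathbb{Q}}$ when $\rho=2$), and then, using ${\rm Pic}(X)\cong{\rm NS}(X)$ (as $H^1(X,\mathcal{O}_X)=0$), $f_*\mathcal{O}_X=\mathcal{O}_B$ and the projection formula, you upgrade the numerical proportionality to the statement that a multiple of $F$ is an honest pullback $f^*D_i$, which forces $f^{-1}(\Delta)$ to coincide with $F$ as a set; hence $f^*\Delta=mF$ with $m\geq 2$, a multiple fiber. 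The paper argues instead by contradiction on the fiber itself: if the generic fiber over $B^1$ had components of different multiplicities, the divisor $D_k$ swept out by the multiplicity-$k$ components would restrict nontrivially to some other component $X_{t,l}$ of that fiber (this uses that the fiber is a connected local complete intersection over the smooth locus of $B$, via Matsushita flatness, so its components meet in codimension $1$) while restricting trivially to a general fiber; together with $L$ and an ample class this produces three independent classes in ${\rm NS}(X)$, contradicting $\rho=2$. Your approach buys a stronger conclusion — the reduced divisor $f^{-1}(\Delta)$ is irreducible, not merely that all multiplicities agree — at the cost of the hyper-K\"ahler-specific machinery (Fujiki relations, Hodge-index on ${\rm NS}$, $b_1=0$) and of the standard facts you quote without proof ($[X_b]$ proportional to $L^n$, connectedness of fibers giving $f_*\mathcal{O}_X=\mathcal{O}_B$), all of which are indeed standard in this setting; the paper's argument is more elementary and purely geometric, but only yields equality of the multiplicities. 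One small remark: the positivity $a_i>0$ is not actually needed in your step producing $G_i$, since effectivity of $D_i$ comes for free from $D_i$ being the divisor of a section; and the possible presence of components of $f^{-1}(\Delta)$ not dominating $\Delta$ is harmless, since your final identity ${\rm Supp}(f^*D_i)=F_i$ excludes them anyway.
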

\begin{proof} As $B$ is normal, hence smooth in codimension $1$,  the general non-reduced fibers appearing in codimension $1$ appear over the smooth locus $B'$, over which the Lagrangian fibration is flat by Matsushita's theorem \cite{matsuflat}. Assume that over a generic point $t\in B^1\subset B'$ of a divisor in $B'$, the fiber $X_t$ is not reduced but not a multiple fiber. This implies that  it has several irreducible components $X_{t,l}$ appearing with different multiplicities $l$. The inverse image $D:=f^{-1}(B^1)$ thus has  several irreducible components $D_k$,   where $D_k$ is  defined as the union of components of the general fiber  of $f$ over $B^1$  with  multiplicity $k$.  As the fiber $X_t$ is a local complete intersection, its irreducible components meet in codimension $1$ and as it is connected, it follows that $D_k$ is effective and non-trivial when restricted to a component  $X_{t,l}$ for some $l\not=k$. As the divisor $D_k$ is trivial on the general fiber $ X_{t'},\,t'\in B$, we conclude that $\rho(X)\geq 3$, since ${\rm Pic}(X)$ already contains the Lagrangian line bundle, which is trivial on $X_{t,l}$ and $X_{t'}$, and an ample divisor, which is nontrivial on both  $X_{t,l}$ and $X_{t'}$. This contradiction concludes the proof.
\end{proof}
Recall that, by Matsushita \cite{matsushita}, the  Lagrangian fibrations deform over a codimension $1$ locally closed analytic subspace of the  Kuranishi family of $X$, so that, for the general projective deformation $(X_t,f_t)$ of $(X,f)$, $X_t$ has Picard number $2$ and  Lemma \ref{rho2projpasdepb} applies.
We now   consider the   case of a Lagrangian fibration $f:X\rightarrow B$, such that the non-reduced  fibers appearing in codimension $1$, that is, over a divisor $B^1$ of $B$, are  multiple fibers.
We work again  over the Zariski open set  $B''$ of $B'$ where the various components of the divisor $B^1$ do not intersect  and over which $f$ has rank $\geq n-1$. (This is the complement of a codimension $2$ subset in $B$.) We now make the following construction:
As $f: X''\rightarrow B''$  is of rank $n-1$ over ${B''}^1:=B^1\cap B''$, the morphism $\phi:f^*\Omega_{B''}\rightarrow \Omega_{X''}$ can be modified into a morphism
\begin{eqnarray} \label{eqphilog} \phi_{\rm sat}: (f^*\Omega_{B''})_{\rm sat} \rightarrow \Omega_{X''},\end{eqnarray}
which is now generically of rank $n$ along  $D$, where the vector bundle  $( f^*\Omega_{B''})_{\rm sat}$ can be defined over $B''$  as  the saturation of $f^*\Omega_{B''}$ in $\Omega_X$ and has the following explicit description: let $D:=f^{-1}(B^1)$   (equipped with its reduced structure). By our assumption on $B''$, at any point of $X''\cap D$,  the kernel of $\phi:f^*\Omega_{B''}\rightarrow \Omega_{X''}$ over $t\in {B''}^1$ equals $dg_t$ where $g_t$ is a defining equation of $B^1$ near $t$. Indeed, if the fibers over the component of $B^1$ passing through $t$ have multiplicity $k$, then we have locally in a neighborhood of  $f^{-1}(t)$,
$f^* g_t=h_t^k$  for a function $h_t$ on $X$  defining $D$, hence $f^*dg_t=kh_t^{k-1} dh_t$. The vector bundle
$(f^*\Omega_{B''})_{\rm sat}$ is then  locally generated by $f^*\Omega_{B''}$ and $dh_t$, that is,  by $f^*\Omega_{B''}$  and $\frac{f^*dg_t}{h_t^{k-1}}$.

Using this construction and the fact that the Chern classes of $(f^*\Omega_{B''})_{\rm sat}$ are pulled-back from $B''$ with rational coefficients (as is the class of the divisor $D$),  all the vanishing results for Chern classes  obtained in the previous section and relying on  Proposition \ref{pronullC} on Lagrangian morphisms of vector bundles with degeneracy locus of codimension $2$, in particular Corollary \ref{thoeversionfaible}, can be proved without  assuming the non-existence of  non-reduced fibers in codimension $1$. When the Picard number of $X$ is $>2$, we first have to deform $X$ to a general $X_t$ with Picard number $2$, prove the result for $X_t$ using the construction above, and specialize it to $X$ (see \cite{riess}). We do not give the full argument here as we will discuss another approach in the next section.
\section{Higher rank loci for Lagrangian fibrations\label{secranklocilagfib}}
We study in this section the stratification by the rank of a Lagrangian fibration with smooth base.
Let $f:X\rightarrow B$ be a Lagrangian fibration of a hyper-K\"{a}hler manifold $X$  with ${\rm dim}\,X=2n$. We assume that $B$ is smooth (or restrict to its smooth locus $B'$).   For any integer $k$,  we consider  the Zariski  locally closed   subset
$X_{k}\subset X$ of points where $f$ has rank $k$.
\begin{lemm} \label{lemmanewauckland} The relative dimension of $X_k$ over $B$ is at least $k$.
\end{lemm}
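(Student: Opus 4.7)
The plan is to produce, through any point $x\in X_k$, an analytic subvariety of dimension $\geq k$ lying inside $X_k\cap f^{-1}(f(x))$; this immediately shows that every fiber of $f|_{X_k}$ has dimension at least $k$. The subvariety will be the orbit of $x$ under the commuting Hamiltonian flows attached to functions pulled back from $B$.

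The first step is to translate the Lagrangian condition into the tangent picture. Let $\omega$ denote the holomorphic symplectic form on $X$, and set $K_x:=\ker(df_x)$, of dimension $2n-k$ for $x\in X_k$. Under the symplectic isomorphism $\omega^\flat\colon T_X\xrightarrow{\sim}\Omega_X$, the image of $df^*_x\colon f^*\Omega_{B,f(x)}\to\Omega_{X,x}$, which equals the annihilator of $K_x$, corresponds to the symplectic orthogonal $K_x^{\perp_\omega}\subset T_{X,x}$. The Lagrangian hypothesis, namely that this image is isotropic, becomes the inclusion $K_x^{\perp_\omega}\subset K_x$, and in particular $\dim K_x^{\perp_\omega}=k$. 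Concretely, $K_x^{\perp_\omega}$ is spanned at $x$ by the values of the Hamiltonian vector fields $X_{f^*\phi}$ as $\phi$ runs over germs of holomorphic functions on $B$ near $f(x)$. Any such vector field is tangent to every fiber of $f$ (because $\{f^*\phi,f^*\psi\}=0$ for all $\psi$ by the Lagrangian condition), so its flow $\Psi^\phi_t$ satisfies $f\circ\Psi^\phi_t=f$; differentiating the latter identity shows that $\mathrm{rk}(df)$ is invariant along the flow, hence $\Psi^\phi_t$ preserves $X_k$ set-theoretically and $X_{f^*\phi}$ is tangent to $X_k$ at every point where $X_k$ is defined.

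To conclude, I would choose local coordinates $y_1,\ldots,y_n$ on $B$ centered at $f(x)$, set $X_i:=X_{f^*y_i}$, use $\{f^*y_i,f^*y_j\}=0$ to check that the $X_i$ commute pairwise, and form the holomorphic orbit map
\[
\rho\colon U\longrightarrow X,\qquad \rho(t_1,\ldots,t_n):=\Psi^{y_1}_{t_1}\circ\cdots\circ\Psi^{y_n}_{t_n}(x),
\]
on a neighbourhood $U$ of $0\in\mathbb{C}^n$. By the previous paragraph, the image of $\rho$ lies in $X_k\cap f^{-1}(f(x))$, and $d\rho_0$ sends the standard basis of $\mathbb{C}^n$ to $X_1(x),\ldots,X_n(x)$, whose span is exactly $K_x^{\perp_\omega}$, of dimension $k$. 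By the constant rank theorem the image of $\rho$ has dimension at least $k$ in a neighbourhood of $x$, which proves the lemma. I do not foresee a serious obstacle: the argument is a local version of the classical Arnold-Liouville construction of the commuting Hamiltonian flows on Lagrangian fibers, refined so as to read off the actual rank $k$ rather than the generic one.
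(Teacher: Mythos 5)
Your argument is correct and is essentially the paper's own proof: both produce, through each $x\in X_k$, the orbit of the commuting Hamiltonian vector fields attached to functions pulled back from $B$, observe that these flows are vertical and preserve the rank stratum $X_k$, and conclude that the orbit is a $k$-dimensional subset of $X_k\cap f^{-1}(f(x))$. The only cosmetic difference is that the paper chooses exactly $k$ functions whose pulled-back differentials are independent near $x$, so the local $\mathbb{C}^k$-action is free and the orbit is manifestly $k$-dimensional, whereas you use all $n$ coordinates and should, instead of invoking the constant rank theorem (the rank of $\rho$ need not be constant near $0$), simply restrict $\rho$ to a $k$-plane on which $d\rho_0$ is injective to get an immersed $k$-dimensional germ in the fiber.
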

\begin{proof} Let $x\in X_k$ and let $b=f(x)\in B$. Let $f_1,\ldots,f_k$ be $k$ algebraic or holomorphic functions on $B$,  defined near $b$, such that the differentials $f^* df_i$ are independent near $x$. Then the Hamiltonian vector fields
$\chi_i$ defined on $X$ near $x$ by the formula
$$\chi_i\lrcorner \sigma_X=f^*df_i$$
are independent near $x$ and commute, since the fibration$f$  is Lagrangian. They thus generate a holomorphic  foliation of an Euclidean neighborhood $V$ of $x$ in $X$ and a  free   action $U\times V\rightarrow V'$ on $V\subset V'$   of a germ $U\subset \mathbb{C}^k$ of  commutative group of automorphisms. As $f$ is a Lagrangian fibration, the considered  automorphisms $\psi\in U$ preserve $f$, namely $f\circ \psi=f$ on $V$ for any $\psi\in U$. It follows that the locus $X_k\cap V$ is preserved by the action of $U$ (that is, mapped to $X_k\cap V'$) and  for any $x'\in X_k\cap V$, the orbit $U\cdot x'$ is contained in $X_k\cap V'$. As these orbits are contained in the fibers of $f_{\mid V}$, the lemma is proved.
\end{proof}
\begin{rema}\label{remagroupaction} {\rm  This  vertical group action  is  also used in \cite{hwangoguiso}. Thanks to the fact that the action is vertical and the map $f$ is proper, the  germ of commutative group appearing  above  globalizes to a  holomorphic action of a commutative group isomorphic to $\mathbb{C}^k$  on a neighborhood of  the fiber $X_t$ in $ X$ passing through $x$. This is obtained by observing that, by properness, the flow generated by Hamiltonian vector fields is defined for  all time. This action  is free assuming that the differentials $f^* df_i$ remain independent everywhere along the fiber. This will be the case in Section \ref{secc2square}, where we will consider the case $k=n-1$.}
\end{rema}
We also note the following basic
\begin{lemm} \label{leridicule} Let $f: Y\rightarrow  S$ be an algebraic or analytic  morphism, where both $Y$ and $S$ are smooth.
With the same notation $Y_k$ as above, for any  irreducible component $Z$  of $Y_k$, one has ${\rm dim}\,f(Z)\leq k$.
\end{lemm}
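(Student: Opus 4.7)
The plan is to compare the rank of $f$ at a general point of $Z$ with the rank of the restricted morphism $f\vert_Z : Z \rightarrow f(Z)$. Since $Z \subset Y_k$ is an irreducible locally closed subset, its smooth locus $Z^{\rm sm}$ is dense in $Z$, and for any $y\in Z^{\rm sm}$ the tangent space $T_{Z,y}$ is a well-defined linear subspace of $T_{Y,y}$.

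First I would pick a point $y\in Z^{\rm sm}$ which is also a smooth point of the morphism $f\vert_Z: Z \to \overline{f(Z)}$ viewed as a map onto its image (in the analytic case this is available on a dense open set of $Z$ by the constant rank theorem, in the algebraic case it follows from generic smoothness in characteristic zero). At such a $y$, the rank of $df\vert_{Z,y}:T_{Z,y}\rightarrow T_{S,f(y)}$ equals ${\rm dim}\,f(Z)$. On the other hand, $df\vert_{Z,y}$ is the restriction of $df_y:T_{Y,y}\rightarrow T_{S,f(y)}$ to the subspace $T_{Z,y}$, and since $y\in Y_k$ the full map $df_y$ has rank exactly $k$. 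Hence
\[
{\rm dim}\,f(Z)\;=\;{\rm rank}\,df\vert_{Z,y}\;\leq\;{\rm rank}\,df_y\;=\;k,
\]
which is exactly the desired inequality.

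There is essentially no obstacle here: the whole argument is the observation that the differential of $f\vert_Z$ factors through the rank-$k$ linear map $df_y$, combined with the standard fact that at a generic point the rank of $f\vert_Z$ computes the dimension of its image. The only minor care needed is to pick $y$ simultaneously smooth on $Z$ and a point of generic rank for $f\vert_Z$, which is possible because each of these conditions is satisfied on a dense open subset of $Z$.
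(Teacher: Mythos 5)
Your proof is correct and is essentially the paper's own argument: the paper simply notes that the generic rank of $f_{\mid Z}$ is at most $k$ (precisely because $df\vert_Z$ factors through the rank-$k$ differential $df$ on $Y_k$) and then invokes Sard's theorem, which is the same reasoning you spell out at a general smooth point of $Z$. No gap; you have just made the two standard ingredients explicit.
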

\begin{proof} Indeed, the generic  rank of $f_{\mid Z}$ is not greater than $k$, hence $f(Z)$ has dimension $\leq k$ by Sard's theorem.
\end{proof}

\begin{proof}[Proof of Theorem \ref{theopourrankloci}] Statement (1) is Lemma \ref{leridicule} and statement (2) is Lemma \ref{lemmanewauckland}. In order to prove (3) and (4), we need to globalize the  argument used in the proof of Lemma \ref{lemmanewauckland}. Let $Z$ be an irreducible  component of $X_k$ which has dimension $2k$ and whose image $f(Z)$ in $B$ has dimension $k$. We observe that,  by applying again Lemma \ref{leridicule} to $X_{\leq k-1}$, for a general point  $b\in f(Z)$, the fiber $Z_b:=(f_{\mid Z})^{-1}(b)$ is contained in $X_k$, that is, the rank of $f$   along $Z_b$ is everywhere $k$.  There is thus a morphism $$\tilde{f}_Z:Z\rightarrow {\rm Grass}(k, f^*T_{B}),$$
$$z\mapsto {\rm Im}\,f_{*,z}\subset T_{B,f(z)},$$
 which is well-defined in a neighborhood of $Z_b$. This morphism is generically constant along $Z_b$, since  at any point of $Z\cap X_k$ where the rank of $f_{\mid Z}$ is also $k$, we have
 $${\rm Im}\,f_{\mid Z, *}={\rm Im}\,f_{*}$$
 and the space ${\rm Im}\,f_{\mid Z, *}$ has to be equal to $T_{f(Z),b}$ at a general point of $f(Z)$. It follows that the morphism $\tilde{f}_Z$ is actually constant along $Z_b$. We now argue as in the proof of Lemma \ref{lemmanewauckland}, in a global setting. Let $b=f(z)$ and let $f_1,\ldots, f_k$ be holomorphic   functions defined on $B$ near $b$ whose restrictions to $f(Z)$  have independent differentials at $b$. By the above argument, the pulled-back functions $g_i=f_i\circ f$ on $X$ have independent differentials along $Z_b$, hence in a neighborhood of $Z_b$ in $X$. Furthermore, their  Poisson brackets vanish  since $f$ is Lagrangian.
It follows that the corresponding Hamiltonian vector fields $\chi_i$ generate an integrable foliation, whose underlying vector bundle is trivial along $Z_b$, and which is vertical, in the sense that it is annihilated by $f_*$, or equivalently that the   diffeomorphisms $\psi_{i,t},\,t\in\mathbb{C}$, generated by the Hamiltonian vector fields $\chi_i$ satisfy $f\circ \psi_{i,t}=f$. Recall from Remark \ref{remagroupaction} that these diffeomorphisms $\psi_{i,t}$ are in fact  defined in a neighborhood of $X_b$ in $X$.
As we have $f\circ \psi_{i,t}=f$, $\psi_{i,t}$ preserves $X_k$, hence it preserves its irreducible components and thus  acts on $Z$ and  on the fiber $Z_b$. As the  group $U\cong \mathbb{C}^k$ generated by the $\psi_{t,i}$'s  is of dimension $k$ and acts freely on $X$, and by assumption ${\rm dim}\,Z_b=k$, we conclude that the orbits of this germ of groups are open in  $Z_b$. In particular $Z_b$ is smooth and its tangent bundle is trivial, isomorphic to the  restriction of $\mathcal{F}$ to $Z_b$, and generated by commuting vector fields. Thus $Z_b$ is a disjoint union of  compact complex tori $T_i$, and the  group action above of $U$ on $Z_b$  factors through the action of  $T_i$ on itself by translations (thus $U$ identifies to the universal cover of $T_i$). As we already observed, $U$   not only  acts  on $T_i$  by translations, but also on a neighborhood of $T_i$ in $X_k$ and $X$. We thus conclude that $U$ acts on the restricted normal bundle $N_{X_k/X\mid T_i}$, making it homogeneous.
\end{proof}

Coming back to Lemma \ref{leridicule}, note that we can easily find examples where  $f(Z)$ has dimension $< k$. For example, consider the case where the morphism $f$ is the blow-up $Y:={\rm Bl}_M(S)\rightarrow S$ of a smooth subvariety $M\subset S$ of codimension $2$. Then the exceptional divisor $E$ is the locus where $f$ is not of maximal rank. The rank of $f$ along $E$ is $m-1$, where $m={\rm dim}\,S={\rm dim}\,Y$, and the rank of $f_{\mid E}$ is $ m-2$. Unlike the Lagrangian fibration maps when the basis is smooth, this example is not flat, but there are also flat examples of this phenomenon: consider a $3$-dimensional singular affine  quadric $Q\subset \mathbb{A}^4$ of equation $x^2+y^2+z^2+t^2=0$ and let $\tau:Y\rightarrow Q$ be a small resolution of the singular point $0\in Q$. Let $f=l\circ \tau: Y\rightarrow \mathbb{A}^2$, where $l:Q\rightarrow  \mathbb{A}^2$ is the linear  projection $(x,y,z,t)\mapsto (x,y)$. Then the fibers of $f$ over $(x,y)\not=0$ are  conics $z^2+t^2=a$ and the fiber of $f$ over $0$ is the union of a conic  $z^2+t^2=0$ and the exceptional curve $E$. They are thus of dimension $1$, so $f$ is flat. The locus where $f$ has rank $1$ is the union of two components: the first component is the set of singular points of the
conics $z^2+t^2=-x^2-y^2$ over the set of points $(x,y)$ such that  $x^2+y^2=0$, and the restriction of $f$ to this component has generic rank $1$. The second component is  the exceptional curve $E$ and the restriction of $f$ to $E$  has generic rank $0$ since it is contracted. Finally,  the rank of $f$ is generically $1$ along $E$, because if instead of $Y$ we consider the blow-up of $Y$ along $E$, that is the blow-up $\tau':\widetilde{Q}\rightarrow Q$ of $Q$ at its singular point, with exceptional divisor  $F$,  then  the rank of $l\circ \tau'$ is generically $1$ along $F$ and the same follows easily for $Y$.
In the case of  a Lagrangian fibration $f:X\rightarrow B$ with smooth $B$,  we do not know if an example as above is possible:
\begin{question} \label{questionnewauckland}  Let   $f:X\rightarrow B$  be a Lagrangian fibration with $X$ hyper-K\"{a}hler and  $B$ smooth  (or the restriction of a Lagrangian fibration over the smooth locus $B'$ of its base). For any integer $k\geq 0$ and  any component $Z$ of the set
$X_{k}\subset X$ of points where $f$ has rank $k$, is it true that  $f(Z)$ has dimension $k$ (or equivalently, that the generic rank of $f_{\mid Z}$ is $k$)?
\end{question}
Note that an affirmative answer to Question \ref{questionnewauckland} would imply, using  Lemma \ref{lemmanewauckland}, that the dimension of $X_k$ (assuming it is non-empty)  is greater than or equal to $2k$, which is different from the generic codimension estimate of  Lemma  \ref{lewarmrank}.

The stratification of $X$ by the rank, that is, by the $X_k$, induces a collection of subsets $f(X_k)=:B_k\subset B$. It   would be interesting to compare these subsets  to another natural stratification on  $B$  related to the topological degeneration of the fibers of $f$.
Assume that the fibers of $f$ are reduced and irreducible. We will say that the fiber $X_b$ has abelian rank $k$ if the Albanese variety of any desingularization $\widetilde{X}_b$ of $X_b$ has dimension $k$.
The following comparison statement can be found   in a slightly different form in \cite[Proposition 5.17]{arinkinfedorov}.
\begin{prop}  Assume that the base  $B$ is smooth,  the fibers of $f$ are reduced and irreducible so  that, by \cite{arinkinfedorov},  there exists a group scheme $\mathcal{G}$ over $B$ with   Lie algebra   isomorphic to $\Omega_B$ which acts on $X$ over $B$. If  a fiber $X_b$, $b\in B$, has abelian rank $k$, then $b$ does not belong to $f(X_l)$ for $l<k$. In other words,  the rank of the differential
$f_*: T_{X,x}\rightarrow T_{B,b}$ is $\geq k$ for any $x\in X_b$.
\end{prop}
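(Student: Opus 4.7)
The first step would be to translate the inequality $\mathrm{rk}\,f_{*,x}\geq k$ into a statement about $\mathcal{G}_b^0$-orbits. At any $x\in X_b$ the infinitesimal action of $\mathcal{G}$ is given at the Lie algebra level by $a_x:\Omega_{B,b}=\mathrm{Lie}(\mathcal{G}_b)\to T_{X,x}$, $\beta\mapsto \sigma_X^{-1}(f^*\beta)|_x$: this is just the Hamiltonian vector field of a pulled-back function, and this identification is part of the construction of $\mathcal{G}$ in \cite{arinkinfedorov}. Since $\sigma_X$ is nondegenerate, the rank of $a_x$ equals the rank of $f^*$ at $x$, which equals $\mathrm{rk}\,f_{*,x}$; since $\mathrm{Im}(a_x)$ is the Zariski tangent to the orbit $\mathcal{G}_b^0\cdot x$, the problem reduces to showing that $\dim(\mathcal{G}_b^0\cdot x)\geq k$ for every $x\in X_b$.

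Next I would produce a surjective group homomorphism $\tau:\mathcal{G}_b^0\twoheadrightarrow \mathrm{Alb}(\widetilde{X}_b)$, where $\pi:\widetilde{X}_b\to X_b$ is a $\mathcal{G}_b^0$-equivariant resolution (which exists by equivariant Hironaka for the action of a connected algebraic group on a projective variety). The group $\mathcal{G}_b^0$ acts on $\mathrm{Alb}(\widetilde{X}_b)$ by functoriality of the Albanese, and any connected algebraic group action on an abelian variety is by translations, so fixing a basepoint $x_0$ in the open orbit of $\widetilde{X}_b$ yields a group morphism $\tau$ with $\tau(g)=\mathrm{alb}(g\cdot x_0)$. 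The Albanese image $\mathrm{alb}(\widetilde{X}_b)$ equals the closure of $\mathrm{alb}(\mathcal{G}_b^0\cdot x_0)=\mathrm{Im}(\tau)$, already closed as the image of a morphism of algebraic groups, and generates $\mathrm{Alb}(\widetilde{X}_b)$ as a group, so $\mathrm{Im}(\tau)=\mathrm{Alb}(\widetilde{X}_b)$ and $\dim\ker\tau=n-k$.

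To conclude I would show that for every $x\in X_b$ the identity component $H_x^0$ of the stabilizer lies in $\ker\tau$. Given a point $\widetilde{x}^*\in\pi^{-1}(x)$ fixed by $H_x^0$, equivariance of $\mathrm{alb}$ gives $\tau(g)+\mathrm{alb}(\widetilde{x}^*)=\mathrm{alb}(g\cdot\widetilde{x}^*)=\mathrm{alb}(\widetilde{x}^*)$ for any $g\in H_x^0$, so $\tau(g)=0$. Then $H_x^0\subseteq\ker\tau$ yields $\dim H_x\leq n-k$, hence $\dim(\mathcal{G}_b^0\cdot x)=\dim\mathcal{G}_b^0-\dim H_x\geq k$, and the first paragraph gives $\mathrm{rk}\,f_{*,x}\geq k$.

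The delicate point is the production of the fixed point $\widetilde{x}^*$ when $x$ is singular and $\pi^{-1}(x)$ is positive-dimensional. For connected solvable \emph{linear} algebraic groups this is Borel's fixed point theorem, but the commutative $H_x^0$ may in principle have a nontrivial abelian quotient. I expect this to be handled, in the Lagrangian-fibered setting, by showing that $H_x^0$ is always contained in the linear part $L$ of the Chevalley decomposition $0\to L\to\mathcal{G}_b^0\to A\to 0$, so that Borel's theorem applies to the action of $H_x^0$ on $\pi^{-1}(x)$. Alternatively, one can exploit the triviality of $\omega_{X_b}$ (inherited from the hyper-K\"ahler structure and the Lagrangian condition) to check that $\mathrm{alb}:\widetilde{X}_b\to\mathrm{Alb}(\widetilde{X}_b)$ factors through $X_b$, after which the invariance of $\mathrm{alb}(x)$ under $H_x$ forces $H_x\subseteq\ker\tau$ without any fixed-point argument.
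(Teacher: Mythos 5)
Your skeleton coincides with the paper's: identify ${\rm rk}\,f_{*,x}$ with the dimension of the $\mathcal{G}_b$-orbit of $x$ (equivalently $n-{\rm dim}\,\mathcal{I}_x$ for the isotropy group $\mathcal{I}_x$), identify the abelian rank $k$ with the dimension of the abelian quotient of $\mathcal{G}_b$ (the paper does this via \cite{brion}: since the fiber is reduced and irreducible it contains points with trivial isotropy, hence is birational to $\mathcal{G}_b$, and any smooth projective model of $\mathcal{G}_b$ is a rationally connected fibration over the abelian part $A_b$, so $A_b$ is the Albanese of any desingularization), and then bound ${\rm dim}\,\mathcal{I}_x$ by showing that the isotropy group dies in the abelian quotient. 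But the containment $H_x^0\subseteq \ker\tau$ is precisely the crux, and you do not prove it. Your main route requires an $H_x^0$-fixed point in $\pi^{-1}(x)$, and Borel's theorem does not apply since $H_x^0$ is not known to be linear; ruling out a positive-dimensional abelian quotient of $H_x^0$ is exactly what has to be proved, so as written the argument is circular at its central point. Your fallback (a) --- that $H_x^0$ lies in the linear part of the Chevalley decomposition --- \emph{is} the paper's key step (``the isotropy group $\mathcal{I}_x$ is affine''), which the paper obtains by letting $\mathcal{I}_x$ act on a general point $y$ near $x$ with trivial isotropy (this is where reducedness and irreducibility enter twice); you state it as an expectation rather than prove it. Note also that once affineness is known, your fixed-point detour is unnecessary: a connected affine group has trivial image in any abelian variety, so $H_x^0\subseteq\ker\tau$ follows immediately.

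Your fallback (b) is false in general: triviality of $\omega_{X_b}$ does not force the Albanese map of a resolution to descend to $X_b$. The generic degenerate fiber over the discriminant already gives a reduced irreducible counterexample: by \cite{hwangoguiso} its normalization $\widehat{X}_b$ is a $\mathbb{P}^1$-bundle over an abelian variety $A_b$, and (in the situation of Proposition \ref{lepourMi14mai}) $X_b$ is obtained by glueing two disjoint sections after a translation $t$ of $A_b$; the Albanese map $\widehat{X}_b\rightarrow A_b$ takes values differing by $t$ at the two preimages of a point of the singular locus, so it descends to $X_b$ only if $t=0$, and $t$ is typically of infinite order. So the Albanese-descent shortcut cannot replace the affineness of the stabilizer, and that affineness is the genuine gap in your proposal.
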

\begin{proof} (Cf.   \cite{arinkinfedorov})
The Lie algebra of $\mathcal{G}$ is isomorphic to $\Omega_B$ and the infinitesimal action of $\mathcal{G}$ on $X$ is given at any $x\in X$ by
the pull-back map  $f^*: f^*\Omega_B\rightarrow \Omega_X\cong T_X$ followed by evaluation at $x$. The set $X_l$ of points $x\in X$  where the evaluation map has rank $l$ is thus also the locus where the isotropy subgroup $\mathcal{I}_x$ of $x$ has dimension $n-l$. By our assumption on the singularities of the fibers, any fiber $X_t$ is birational to the corresponding fiber $\mathcal{G}_t$, which is a commutative algebraic group. By  \cite[Theorem 2]{brion}, the group $\mathcal{G}_b$, $b=f(x)$, is an extension of an abelian variety $A_b$ by an affine group $N_b$, so any smooth projective model of  $\mathcal{G}_b$  is  a rationally connected fibration over $A_b$, hence $A_b$ is the Albanese variety of any desingularization  of $\mathcal{G}_b$.  We now observe that, as the fiber $X_{b}$ is irreducible and reduced, there are points $y\in  X_{b}$ such that the isotropy group of $y$ is trivial. It follows first of all that the fiber $X_b$ is birational to the corresponding fiber $\mathcal{G}_b$. Secondly,  considering the action of $\mathcal{I}_x$  on a general point $y\in X_b$ close to $x$, we get that the isotropy group $\mathcal{I}_x$ is affine. It is thus  contained in $N_b$, hence we get, for $x\in X_l$,
 $${\rm dim}\,\mathcal{I}_x=n-l\leq  {\rm dim}\,N_b=n-{\rm dim}\,A_b,$$  which proves the proposition.
\end{proof}

We will  prove in the next section  the following result, which relates Question \ref{questionnewauckland} to unexpected vanishing for Chern classes.
\begin{theo} \label{propnewauckland} Assume that  $B$ is smooth and that for some integer $k\geq 0$, and for  any  $l>n-k$, either $X_l$ is empty or, for any irreducible component $W$ of $X_l$, one has   ${\rm dim}\,W= 2l$ and   ${\rm dim}\,f(W)= l$. Then

(i) If  ${\rm dim}\,f(X_{n-k})<n-k$, one has \begin{eqnarray}\label{eqvanspecial} c_{2k}(X)\alpha^{n-k}=0\,\,{\rm in}\,\,H^*(X,\mathbb{C})
\end{eqnarray}
 for any class $\alpha\in H^2(X,\mathbb{C})$ such that $q(\alpha)=0$.

(ii) If  $X_{n-k} $ is not empty and any component $W$ of $X_{n-k}$ satisfies $${\rm dim}\,W=2n-2k, \,\,{\rm dim}\,f(W)=n-k,$$  the class
 \begin{eqnarray}\label{eqvanspecialsup0} c_{2k}(X)L^{n-k}\in {\rm CH}(X)
\end{eqnarray}
is $\mathbb{Q}$-effective and nonzero.

(iii)  Under the same assumptions as in (ii) for $k=n$, $2n={\rm dim}\,X$, one has $\chi_{\rm top}(X)\geq 0$ and
$ \chi_{\rm top}(X)= 0$ if and only if the locus $X_0$ where $f$ has rank $0$  is empty.
\end{theo}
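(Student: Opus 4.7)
The plan is to derive all three parts from a common support statement for $c_{2k}(X)$: under the max-dim hypothesis for $l > n-k$, the class $c_{2k}(X)$ admits a representative supported on $\overline{X_{\leq n-k}}$, in $H^*(X,\mathbb{Q})$ and (with more care) in $CH^{2k}(X)$.

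The core of this support claim uses Theorem \ref{theopourrankloci}(3)--(4) together with the free commutative vertical action $U\cong\mathbb{C}^l$ on each component $W$ of $X_l$ constructed in the proof of that theorem. On a general-fiber complex torus $T\subset W$ of dimension $l$, both $T_W|_T$ (via the $U$-equivariance, $T$ being a $U$-orbit) and $N_{W/X}|_T$ (by Theorem \ref{theopourrankloci}(4)) are homogeneous vector bundles on $T$. Since homogeneous bundles on a complex torus are iterated extensions of translation-invariant line bundles in $\mathrm{Pic}^0(T)$, their Chern classes vanish in $H^*(T,\mathbb{Q})$; hence $c_i(T_X|_T)=0$ for all $i>0$. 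A moving argument along the $U$-orbits then globalizes this fiberwise vanishing to a cycle representative of $c_{2k}(X)$ supported on $\overline{X_{\leq n-k}}$.

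Given the support statement, the three parts follow. For (iii) I would use the additivity and fiberwise multiplicativity of the compactly supported Euler characteristic: each $X_l\to f(X_l)$ with $l>0$ is generically a torus bundle with $\chi_c$(fiber) $=0$, so $\chi_c(X_l)=0$, while $X_0$ is finite with $\chi_c(X_0)=|X_0|$; hence $\chi_{\rm top}(X)=|X_0|$. For (ii) I would upgrade the support claim to an equality $c_{2k}(X)=m\,[\overline{X_{n-k}}]+E$ in $CH^{2k}(X)$ with $m\in\mathbb{Q}_{>0}$ (the multiplicity read off from the local Lagrangian model of Lemma \ref{lewarmrank}(iii) and Proposition \ref{pronullC}(iii) at a smooth point of $X_{n-k}$) and $E$ effective supported on $\overline{X_{\leq n-k-1}}$; intersecting with $L^{n-k}$, the identity $L^{d(n-k)}=f^*H_B^{n-k}$ combined with $\dim f(\overline{X_{n-k}})=n-k$ represents $[\overline{X_{n-k}}]\cdot L^{n-k}$, up to a positive rational factor, by a union of general $(n-k)$-dimensional torus fibers, giving the desired $\mathbb{Q}$-effective nonzero class (with $E\cdot L^{n-k}$ effective by induction). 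For (i) the additional assumption $\dim f(X_{n-k})<n-k$, combined with Lemma \ref{leridicule} applied to lower strata, gives $\dim f(\overline{X_{\leq n-k}})\leq n-k-1$, so the support claim places $c_{2k}(X)$ inside $f^{-1}(Z)$ for some closed $Z\subset B$ with $\dim Z<n-k$; combining this with the Verbitsky--Huybrechts relation $\alpha^{n+1-k}c_{2k}(X)=0$ from (\ref{relverbchern}) and a Leray-spectral-sequence analysis of cup product by the isotropic class $\alpha$ then yields the one-degree-stronger vanishing $\alpha^{n-k}c_{2k}(X)=0$.

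The principal obstacle is the moving argument that globalizes the fiberwise Chern-class vanishing on tori into a cycle-level support statement for $c_{2k}(X)$: while $c_i(T_X|_T)=0$ is clean, lifting it to the full cycle representative requires exploiting the vertical $U$-action consistently over the family $W\to f(W)$ and handling boundary fibers. Secondary delicate points are verifying the positivity $m>0$ in (ii) and making the cohomological argument in (i) rigorous via a careful Leray-filtration analysis adapted to isotropic $\alpha$.
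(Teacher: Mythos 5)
Your argument rests on a support claim -- that $c_{2k}(X)$ has a representative, in cohomology and in ${\rm CH}^{2k}(X)$, supported on $\overline{X_{\leq n-k}}$ -- which you propose to extract from the fiberwise vanishing $c_i(T_X{}_{\mid T})=0$ on the torus fibers $T$ of the strata via an unspecified ``moving argument''. This is a genuine gap, and you flag it yourself. Vanishing of Chern classes restricted to the compact, positive-codimension tori $T\subset X_l$ says nothing about the restriction of $c_{2k}(X)$ to the open stratum $X_l$, let alone to the open set $X\setminus\overline{X_{\leq n-k}}$, so no localization follows; moreover the fiberwise input itself is only a cohomological statement -- as the paper stresses in Section \ref{secc2square}, Chern classes of homogeneous (flat) bundles on abelian varieties can be nonzero, indeed infinite-dimensional in the Mumford sense, in the Chow ring -- so the ``with more care in ${\rm CH}$'' upgrade is not available. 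The same issue sinks your route to (ii): the decomposition $c_{2k}(X)=m[\overline{X_{n-k}}]+E$ with $m>0$ cannot be read off the local model of Lemma \ref{lewarmrank}(iii) and Proposition \ref{pronullC}(iii), because by Remark \ref{remageoranksup} the stratum $X_{n-k}$ has codimension $2k$, which for $k\geq 2$ is strictly smaller than the generic codimension $\frac{3k^2+k}{2}$; the situation is excess-dimensional, and handling this excess is the heart of the paper's proof. There, one restricts to $X^{n-k}=f^{-1}(B^{n-k})$ for a general complete intersection $B^{n-k}\subset B$, reduces via Theorem \ref{theonew} and the Whitney formula to the vanishing (resp. effectivity) of $c_{2k}(\mathcal{E})$ for the rank-$2k$ symplectic quotient bundle $\mathcal{E}$, passes to $\mathbb{P}(\Omega_{B^{n-k}})$ where the Lagrangian morphism becomes a section of $\mathcal{S}^{-1}\otimes\mathcal{E}$, and applies Fulton's excess intersection formula: the dimension hypotheses force the image of each stratum's contribution to be small enough, and the excess bundle along each stratum with $l>0$ is shown to be trivial of rank $l$, so those contributions vanish and only the $l=0$ stratum survives (empty in case (i), an effective nonzero cycle in case (ii)). A cycle-level support statement of the kind you want is of the same depth as Conjecture \ref{conjvanop} and is only obtained in the paper through the Grassmannian-bundle construction and condition ($*_k$) (Theorem \ref{theonewplustechassavec*}), and even then as support over a codimension-$i$ subset of $B$, not on the rank strata in $X$.

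Two further steps are also unjustified. In (i), the passage from the Lagrangian class $L$ to an arbitrary isotropic $\alpha\in H^2(X,\mathbb{C})$ cannot be achieved by a Leray-filtration analysis of cup product by $\alpha$: a general isotropic $\alpha$ bears no relation to $f$, and the relation $\alpha^{n+1-k}c_{2k}(X)=0$ of (\ref{relverbchern}) gives no leverage by itself. The paper instead proves $c_{2k}(X)L^{n-k}=0$ in ${\rm CH}(X)$ and then uses the mapping class group action, which preserves Chern classes and acts on $H^2$ through a finite-index subgroup of $O(q)$, so that the orbit of $l$ is Zariski dense in the quadric $q=0$; this density step is indispensable and missing from your outline. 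In (iii), the assertion $\chi_c(X_l)=0$ for $l>0$ does not follow from $X_l\rightarrow f(X_l)$ being \emph{generically} a torus bundle: the part of $X_l$ lying over the non-generic locus of $f(X_l)$ need not fiber into tori, and making this additivity argument rigorous is essentially Beauville's argument, which, as the paper notes, requires a group scheme $\mathcal{G}$ over $B$ acting on $X$, whose existence is open when fibers are reducible or nonreduced; the paper instead deduces (iii) as the case $k=n$ of (ii), i.e., from the same excess-intersection analysis.
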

\begin{rema}{\rm  The vanishing (\ref{eqvanspecial}) is a topological  property which is different from the cohomological  vanishing relations (\ref{relverbchern}) described in the introduction and is not expected to hold in general. In fact, using Theorem \ref{propnewauckland}(ii), one sees that  it does not hold for the known hyper-K\"{a}hler manifolds.}
\end{rema}
\begin{rema} {\rm  Assume there is a commutative group scheme $\mathcal{G}$ over $B$,  with Lie algebra $\Omega_B$, acting on $X$. Then according to \cite[Proposition 5.17]{arinkinfedorov}, the abelian part of $\mathcal{G}_b$ has for dimension  the infimum of the ranks of $f_{x*}$ for $x\in X_b$. In particular the abelian part is nontrivial away from $f(X_0)$ and is nontrivial everywhere if $X_0$ is empty. This reproves Theorem \ref{propnewauckland}(iii) in this case by the classical  argument (see \cite{beauville}). }
\end{rema}

We next  discuss  a refinement of the  condition appearing in Question \ref{questionnewauckland}. We denote by $\pi:\mathcal{G}_k\rightarrow B$  the relative Grassmannian $G(k,T_{B})$ of $k$-dimensional subspaces of $T_{B}$.  Consider the following condition for given $k$.

($*_k$) {\it For any   irreducible  component $W$ of the set
$X_{k}\subset X$, the rational map
\begin{eqnarray}\label{eqpsi}\psi_W: W\dashrightarrow \mathcal{G}_k,\\ \nonumber
 x\mapsto {\rm Im}\,f_{*,x}\subset T_{B,f(x)},\end{eqnarray}
 has image of dimension  $\leq k$.}
 \begin{lemm} \label{lepour*} Condition ($*_k$) holds for a given  component $W$ of $X_k$  if either ${\rm dim}\,W \leq 2k$, or ${\rm dim}\,f(W)=k$.  In particular, it holds if  Question \ref{questionnewauckland} has an affirmative answer for $W$.
 \end{lemm}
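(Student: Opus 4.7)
The plan is to handle the two disjunctive hypotheses separately, noting first that the ``in particular'' clause is immediate: an affirmative answer to Question \ref{questionnewauckland} for $W$ means exactly that $\dim f(W) = k$, which is the second case. So everything reduces to proving $(*_k)$ under (a) $\dim W \leq 2k$, and under (b) $\dim f(W) = k$.

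For case (b), I would look at a general point $x \in W$ where $f(W)$ is smooth at $b = f(x)$. Since $f|_W$ has image of dimension exactly $k$, its generic rank is $k$, so $\mathrm{Im}\,(f|_W)_{*,x}$ is a $k$-dimensional subspace equal to $T_{f(W),b}$. This subspace is contained in $\mathrm{Im}\,f_{*,x}$, which itself has dimension $k$ since $x \in X_k$, so the two coincide. Hence $\psi_W(x) = T_{f(W),f(x)}$ depends only on $f(x)$; that is, $\psi_W$ factors generically through the Gauss map of $f(W)$, giving $\dim \psi_W(W) \leq \dim f(W) = k$.

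For case (a), the key input is the Hamiltonian construction already used in the proof of Lemma \ref{lemmanewauckland} and Theorem \ref{theopourrankloci}. At a general point $x \in W$ with $b = f(x)$, pick $k$ functions $f_1,\ldots,f_k$ on $B$ near $b$ with $f^*df_i$ independent at $x$; the associated commuting Hamiltonian vector fields $\chi_i$ generate a free local action of $U \cong \mathbb{C}^k$ preserving $f$ (so vertical) and preserving $X_k$ (so preserving $W$ near $x$, up to shrinking). The crucial observation I would highlight is that this action also preserves $\psi_W$: if $\phi_t$ denotes the flow of $\chi_i$, then $f \circ \phi_t = f$ forces $f_* \circ (\phi_t)_* = f_*$, and since $(\phi_t)_*$ is an isomorphism on tangent spaces,
\[
\psi_W(\phi_t(x)) \;=\; f_*(T_{X,\phi_t(x)}) \;=\; f_*\bigl((\phi_t)_* T_{X,x}\bigr) \;=\; f_*(T_{X,x}) \;=\; \psi_W(x).
\]
Thus the $k$-dimensional orbits of $U$ are contained in the fibers of $\psi_W$, so these fibers have dimension at least $k$, yielding $\dim \psi_W(W) \leq \dim W - k \leq 2k - k = k$.

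The main subtlety — not really an obstacle, but the point to be careful about — is that in case (a) one must justify that the Hamiltonian orbit through a general $x \in W$ stays inside the fixed component $W$ (rather than leaving for another component of $X_k$). This follows because the flow is continuous in $t$, the orbits lie in $X_k$, and near $x$ the component $W$ is a neighborhood of $x$ in $X_k$; hence the local orbit remains in $W$, which is enough for the fiber-dimension estimate. Everything else is a direct application of the tools already developed in Section \ref{secranklocilagfib}.
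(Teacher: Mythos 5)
Your proposal is correct and follows essentially the same route as the paper: the case $\dim f(W)=k$ is handled by identifying $\mathrm{Im}\,f_{*}$ at a general point of $W$ with $T_{f(W),f(w)}$ so that $\psi_W$ factors through $f\vert_W$, and the case $\dim W\le 2k$ uses the vertical free local $\mathbb{C}^k$-action from Lemma \ref{lemmanewauckland}, along whose $k$-dimensional orbits $\psi_W$ is constant. Your extra verifications (the chain-rule computation $f\circ\phi_t=f\Rightarrow \mathrm{Im}\,f_{*,\phi_t(x)}=\mathrm{Im}\,f_{*,x}$ and the remark that the orbit through a general point stays in the component $W$) are just details the paper leaves implicit.
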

\begin{proof} Let $w\in W$. Recall from the proof of Lemma \ref{lemmanewauckland}  that there is a free  local action of  an analytic  germ of commutative group of dimension $k$ on a neighborhood of $w$ in $X$, which is  vertical, that is, preserves $f$. This group action preserves $W$  and the map $\psi_W$ is constant along the orbits of this group acting on $W$, hence if ${\rm dim}\,W \leq 2k$, we have ${\rm dim}\,\psi_W(W)\leq k$.

In the other case, where the generic rank of $f_{\mid W}$ is $k$, the image of $f_*: T_{X,w}\rightarrow T_{B,f(w)}$, for  a general  point $w\in W$,  is equal to the image of  $(f_{\mid W})_*: T_{W,w}\rightarrow T_{B,f(w)}$, that is, to the tangent space of $f(W)$ at the point $f(w)$. In this case, $\psi_W$ is constant along the fibers of $f_{\mid W}$, hence
${\rm dim}\,\psi_W(W)={\rm dim}\,f(W)=k$.
\end{proof}

We conclude this section with a result that will be used in the proof of Theorem \ref{theonewplustechass}. We denote by $$X_{\mathcal{G}_k}=X\times_{B}\mathcal{G}_{k},$$
$$f_\mathcal{G}: X_{\mathcal{G}_k}\rightarrow \mathcal{G}_k,\,\,\pi_X: X_{\mathcal{G}_k}\rightarrow X$$ the fibred product of $X$ and $\mathcal{G}_k$ and its two projections.

\begin{lemm} \label{leimplications} Let $i\leq n$ be an integer.  Assume Condition ($*_k$) of (\ref{eqpsi}) holds for any $k\geq n-i+1$.   Then the  locus $Z\subset  X_{\mathcal{G}_{i-1}}$ of pairs $(x,V),\,x\in X,\,V\subset T_{B,f(x)},\,{\rm dim}\,V=i-1$, such that $f^*:V^{\perp}\rightarrow \Omega_{X,x}$ is not injective, is mapped by $f_\mathcal{G}$ to a  closed algebraic subset  of codimension $\geq i$ in $\mathcal{G}_{i-1}$.
\end{lemm}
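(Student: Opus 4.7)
The plan is to stratify $Z$ by the rank $k$ of $f_{*}$ at $x$ and estimate the dimension of the image in $\mathcal{G}_{i-1}$ separately on each stratum, using Lemma \ref{leridicule} for the low-rank strata and Condition $(*_k)$ for the high-rank ones. The key preliminary reformulation is as follows. Writing $W_x := \mathrm{Im}\, f_{*,x} \subset T_{B,f(x)}$, one has $\mathrm{Ker}(f^*_x: \Omega_{B,f(x)} \to \Omega_{X,x}) = W_x^\perp$, so the restriction $f^*: V^\perp \to \Omega_{X,x}$ fails to be injective exactly when $V^\perp \cap W_x^\perp = (V+W_x)^\perp \neq 0$, i.e., when $V+W_x \neq T_{B,f(x)}$. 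For $x \in X_k$ with $k \leq n-i$ this is automatic by dimension; for $k \geq n-i+1$ it becomes the Schubert-type condition $\dim(V \cap W_x) \geq k+i-n$ on $V$ in the fiber $\mathcal{G}_{i-1,f(x)}$.

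For the low-rank part, the image in $\mathcal{G}_{i-1}$ of $Z \cap \pi_X^{-1}(X_{\leq n-i})$ is contained in $\pi^{-1}(f(X_{\leq n-i}))$, where $\pi:\mathcal{G}_{i-1}\to B$ is the structural projection. By Lemma \ref{leridicule}, $\dim f(X_{\leq n-i}) \leq n-i$, so this preimage has dimension at most $(n-i)+(i-1)(n-i+1)=\dim \mathcal{G}_{i-1}-i$, yielding codimension $\geq i$.

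For each irreducible component $W$ of $X_k$ with $k \geq n-i+1$, I would invoke Condition $(*_k)$ to bound $\dim \Psi_W \leq k$, where $\Psi_W \subset \mathcal{G}_k$ denotes the closure of the image of $\psi_W$. The image in $\mathcal{G}_{i-1}$ of $Z \cap \pi_X^{-1}(W)$ then factors through the incidence variety
\[
\mathcal{I}_W := \{(b, W_0, V) : (b, W_0) \in \Psi_W,\ V \in \mathcal{G}_{i-1,b},\ \dim(V \cap W_0) \geq k+i-n\},
\]
which fibers over $\Psi_W$ with fiber at $(b, W_0)$ a Schubert subvariety of $\mathcal{G}_{i-1,b}$ of codimension $k+i-n$ (the standard Schubert calculation in $G(i-1, n)$ with respect to the fixed subspace $W_0$ of dimension $k$). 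Consequently
\[
\dim \mathcal{I}_W \leq k + (i-1)(n-i+1) - (k+i-n) = \dim \mathcal{G}_{i-1} - i,
\]
so the image of $\mathcal{I}_W$ in $\mathcal{G}_{i-1}$ has codimension $\geq i$. Taking the union over the finitely many components of the finitely many strata $X_k$ completes the argument; the edge case $k=n$ contributes nothing since $V+W_x=T_{B,f(x)}$ is automatic there.

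The main technical point is the Schubert codimension count and checking that it combines cleanly with Condition $(*_k)$; once the reformulation via $V+W_x$ is in place, the dimension estimates reduce to bookkeeping. I would also be careful to handle the rational nature of $\psi_W$ by working with its closed image $\Psi_W$, and to verify that $Z$ is genuinely exhausted by the strata $k \in \{0,1,\ldots,n-1\}$.
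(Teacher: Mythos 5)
Your proposal is correct and follows essentially the same route as the paper: stratify $Z$ by the rank $k$ of $f_*$, dispose of the strata with $k\leq n-i$ via the bound ${\rm dim}\,f(X_k)\leq k$ of Lemma \ref{leridicule}, and for $k\geq n-i+1$ observe that non-injectivity of $f^*$ on $V^{\perp}$ is a Schubert condition of codimension $k+i-n$ determined by ${\rm Im}\,f_{*,x}$, which combined with Condition ($*_k$) gives image codimension $\geq i$. Your packaging via the incidence variety over $\Psi_W$ is just a reorganization of the paper's identical dimension count.
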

\begin{proof}  This is proved   by a dimension count.
The locus $Z$  introduced above is the  union over all $k$ and irreducible components $W$ of $X_k$ of the loci $Z_W:=Z\cap \pi_X^{-1}(W)$. As ${\rm dim}\,f(W)\leq k$ for $W\subset X_k$, the loci  $ \pi_X^{-1}(W)$ are  of codimension  $\geq i$ for $n-k\geq i$, so we can assume that $k\geq n-i+1$.
At a point $x\in  \pi_X^{-1}(W)$, the map $f^*_x:\Omega_{B,f(x)}\rightarrow \Omega_{X,x}$ has rank exactly $k$, and for a $n-i+1$-dimensional subspace $V^{\perp}\subset \Omega_{B,f(x)}$, the condition that $f^*_x:V^{\perp}\rightarrow \Omega_{X,x}$ is not injective says that $V^{\perp}\cap {\rm Ker}\,f^*_x\not=\{0\}$. This imposes $k-n+i$ Schubert conditions on $V$, that are determined by the point ${\rm Im}\,f_{*,x}\in \mathcal{G}_k$. It follows that the codimension of the image of  $Z_W$ in  $\mathcal{G}_{i-1}$ is at least $$k-n+i+{\rm codim}\,f(W)-{\rm dim}\,\psi_W(W_b),$$ where $b\in f(W)$ is a general point and $W_b:=f^{-1}(b)\cap W$. As $${\rm codim}\,f(W)=n-{\rm dim}\,f(W),\,\,\,\,{\rm dim}\,{\rm Im}\,\psi_W={\rm dim}\,f(W)+{\rm dim}\,\psi_W(W_b),$$  we conclude that  the codimension of the image of  $Z_W$ in  $\mathcal{G}_{i-1}$ is at least  $i+k-{\rm dim}\,({\rm Im}\,\psi_W)$.  By assumption,  ${\rm dim}\,({\rm Im}\,\psi_W)\leq k$ for all $k\geq n-i+1$ and $W$, so we get that the codimension of the image of  $Z_W$ in  $\mathcal{G}_{i-1}$ is at least $i$ for all $W$.
\end{proof}

\subsection{Support for Chern classes of Lagrangian fibered varieties \label{secsupport}}

We prove in  this section  the following result.
\begin{theo}\label{theonewplustechassavec*} Let $f: X\rightarrow B$ be a Lagrangian fibration of a projective hyper-K\"{a}hler manifold and $i$ be a positive integer. Assume that $B$ is smooth in codimension $i-1$ and that the condition ($*_k$) of the previous section (see (\ref{eqpsi}))  is satisfied for $k\geq n-i+1 $.
Then  for any $j\geq i$, $c_{2j}(X)$ vanishes in ${\rm CH}(X\setminus f^{-1}(B^i))$ for some codimension $i$ closed algebraic subset $B^i$ of $B$.
\end{theo}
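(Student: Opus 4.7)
The plan is to prove the theorem by induction on $i$, following the strategy of the proof of Theorem \ref{theonew} for the cohomological relation and upgrading it to a support statement by using Lemma \ref{leimplications} to control the degeneracy locus of the associated Lagrangian morphism.

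For the base case $i=1$, over the Zariski open locus $X'_n\subset X$ where $f$ has maximal rank, the exact relative cotangent sequence $0\to f^*\Omega_{B'}\to\Omega_{X'_n}\to f^*T_{B'}\to 0$ gives $c(\Omega_X)=c(f^*(\Omega_B\oplus T_B))$ in $\mathrm{CH}(X'_n)$, so $c_{2j}(\Omega_X)$ is pulled back from a class in $\mathrm{CH}^{2j}(B)$ over $X'_n$; taking $B^1$ to be the union of the discriminant of $f$ with a cycle representative of $c_{2j}(\Omega_B\oplus T_B)$ yields a codimension-$(\geq 1)$ subset of $B$ outside of which $c_{2j}(X)$ vanishes.

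For the inductive step $i-1\to i$, I would fix an ample line bundle $H$ on $B$ and take $n-i+1$ generic sections of $f^*H$ cutting out the complete intersection $\Sigma=f^{-1}(B_{i-1})$ of codimension $n-i+1$ in $X$. Smoothness of $B$ in codimension $i-1$ guarantees that the generic $B_{i-1}$ of dimension $i-1$ lies in the smooth locus $B'$, so that $\Sigma$ is smooth. By Lemma \ref{leimplications}, condition $(*_k)$ for $k\geq n-i+1$ forces the locus $Z\subset X_{\mathcal{G}_{i-1}}$ on which the partial Lagrangian morphism fails to be injective to project to a subset of $\mathcal{G}_{i-1}$ of codimension $\geq i$. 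A suitably generic choice of sections produces a Gauss image $\tilde B_{i-1}\subset\mathcal{G}_{i-1}$ of dimension $i-1$ disjoint from $f_\mathcal{G}(Z)$, so that $\phi:N^*_{\Sigma/X}\to\Omega_X|_\Sigma$ is injective throughout $\Sigma$. Running the computation of Theorem \ref{theonew} then yields, in $\mathrm{CH}(\Sigma)$, the relation
\[
c_{2j}(\Omega_X)|_\Sigma=\sum_{l>0}\beta_l\,L^{2l}|_\Sigma\cdot c_{2j-2l}(\Omega_X)|_\Sigma\qquad (j\geq i).
\]
By the inductive hypothesis, each $c_{2j-2l}(X)$ with $l\geq 1$ is supported on $f^{-1}(B^{i-1}_{j-l})$ for some codimension-$(i-1)$ subset $B^{i-1}_{j-l}\subset B$.

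The main obstacle is then combining the relations on the varying slices $\Sigma$ into a global support statement on $X$ itself, with a single codimension-$i$ subset $B^i\subset B$ independent of the choice of sections; a naive union over $\Sigma$'s produces only codimension $i-1$ in $B$. My strategy would be to work universally over the parameter space $T$ of $(n-i+1)$-tuples of sections, form the universal family $\Sigma_{\mathrm{univ}}\subset X\times T$, which dominates $X$ via the first projection, observe that the relation above holds uniformly on $\Sigma_{\mathrm{univ}}$, and push forward the Chow-theoretic support data from $\Sigma_{\mathrm{univ}}$ back to $X$. The codimension-$i$ subset $B^i\subset B$ is then built by combining the image in $B$ of $f_\mathcal{G}(Z)$ with the codimension-$(i-1)$ inductive data, intersected generically with the fibered complete intersections so as to gain the missing unit of codimension; once $B^i$ is in place, the vanishing of $c_{2j}(X)$ in $\mathrm{CH}(X\setminus f^{-1}(B^i))$ follows from the universal relation.
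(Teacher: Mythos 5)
Your reduction to slice-wise relations does not by itself prove the theorem, and the step you yourself flag as ``the main obstacle'' is exactly where the content of the statement lies; as written it is a plan, not an argument. Two concrete points. First, on a single general slice $\Sigma$ the conormal map $N^*_{\Sigma/X}\to\Omega_{X\mid\Sigma}$ is automatically a bundle injection (Bertini gives smoothness of $\Sigma$, and the conormal sequence of a smooth subvariety is exact), so your invocation of Lemma \ref{leimplications} at this stage buys nothing: what you recover on each slice is precisely the relation of Theorem \ref{theonew}, i.e.\ information about $L^{n-i+1}c_{2j}(X)$, not about the restriction of $c_{2j}(X)$ to an open subset of $X$. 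Knowing that $c_{2j}(X)$ restricts suitably on every member of a covering family of slices does not imply it is supported over a codimension-$i$ subset of $B$; and the naive multiplicative bookkeeping fails because a product of classes each vanishing on some open set only vanishes on each of those open sets, not on their union, so the codimensions of the supports do not add without a further mechanism. Second, the proposed fix---working over the parameter space $T$ of section tuples and ``pushing forward the Chow-theoretic support data,'' ``intersected generically\ldots so as to gain the missing unit of codimension''---names the difficulty without resolving it: you would need (a) a universal bundle morphism on $\Sigma_{\rm univ}$ whose degeneracy locus is controlled, which is where condition ($*_k$) and Lemma \ref{leimplications} genuinely enter (not on a generic slice, where they are redundant); (b) the precise mechanism by which a class supported over a codimension-$j$ subset, multiplied by a class pulled back from a smooth base where it can be represented by a cycle in general position, becomes supported over the intersection, gaining codimension $\geq i$; and (c) a descent statement along the positive-dimensional-fiber projection $\Sigma_{\rm univ}\to X$ transferring ``vanishing outside the preimage of a codimension-$i$ set'' down to $X$. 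None of these is carried out.

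For comparison, the paper's proof is organized so that no restriction to a subvariety of $X$ ever occurs: one works on $X'_{\mathcal{G}}=X'\times_{B'}\mathcal{G}_{i-1}$ with the tautological rank-$(n-i+1)$ subbundle $\mathcal{S}\subset\pi^*\Omega_{B'}$, where Lemma \ref{leimplications} (this is where ($*_k$) is used) makes $\phi_{\mathcal{S}}$ injective outside $f_{\mathcal{G}}^{-1}(Z_i)$ with $Z_i$ of codimension $\geq i$ in $\mathcal{G}_{i-1}$; the Whitney computation is run there with the auxiliary classes being pullbacks from the smooth quasi-projective $\mathcal{G}_{i-1}$, so that in the inductive step the product $f_{\mathcal{G}}^*\alpha\cdot c_{2j}(\mathcal{E})$ is supported over the intersection of a codimension-$(2i-2j)$ representative of $\alpha$ with the codimension-$j$ support, giving codimension $\geq 2i-j\geq i$; and the resulting support statement is transported back to $X$ by the base-change Lemma \ref{lemmachaint}, using smoothness and properness of $\mathcal{G}_{i-1}\to B'$ and the hypothesis that $B\setminus B'$ has codimension $\geq i$. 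Your universal family $\Sigma_{\rm univ}$ is essentially a bundle over this relative Grassmannian picture, so the approach could in principle be completed along the same lines, but the steps (a)--(c) above must be supplied; without them the proof is incomplete.
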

We first  establish some  consequences.
\begin{coro} \label{coroideux}  Let $f: X\rightarrow B$ be a Lagrangian fibration of a projective hyper-K\"{a}hler manifold, where we assume  $B$ normal. Then $c_2(X)$ vanishes in  ${\rm CH}(X\setminus f^{-1}(B^1))$  and $c_{2j}(X) $ vanishes in  ${\rm CH}(X\setminus f^{-1}(B^2))$ for any $j\geq 2$, where $B^i\subset B$, $i=1,\,2$, is a closed algebraic subset of codimension $\geq i$.
\end{coro}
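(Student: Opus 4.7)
The plan is to deduce both statements as applications of Theorem~\ref{theonewplustechassavec*}, with $i=1$ for the claim about $c_2(X)$ and $i=2$ for the claim about $c_{2j}(X)$, $j\ge 2$. The normality of $B$ implies that $B$ is smooth in codimension $1$, so the hypothesis ``$B$ smooth in codimension $i-1$'' is automatic in both cases. What remains is to verify condition $(*_k)$ of (\ref{eqpsi}) for $k\ge n-i+1$.

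For $k=n$, condition $(*_n)$ is trivial: over the smooth locus $B'\subset B$, the tangent bundle $T_{B'}$ has rank $n$, so the relative Grassmannian $\mathcal{G}_n\to B'$ is an isomorphism (the Grassmannian of $n$-planes in an $n$-dimensional space is a point). Hence the map $\psi_{X_n}:X_n\to \mathcal{G}_n$ reduces, via this isomorphism, to $f_{|X_n}$, whose image has dimension $\le n$. This already settles the first assertion, by Theorem~\ref{theonewplustechassavec*} with $i=j=1$.

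For the second assertion, one also needs $(*_{n-1})$. By Lemma~\ref{lepour*}, $(*_{n-1})$ holds for an irreducible component $W$ of $X_{n-1}$ whenever $\dim W\le 2(n-1)$ or $\dim f(W)=n-1$. Since $X_n$ is a dense Zariski open subset of $X$, we have $\dim W\le 2n-1$ for every such $W$. Therefore $(*_{n-1})$ can fail only on ``bad'' components, namely those $W$ which are divisors in $X$ (so $\dim W=2n-1$) with $\dim f(W)\le n-2$. For every such bad $W$, the image $f(W)$ is a closed algebraic subset of $B$ of codimension $\ge 2$.

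Collecting the finitely many images $f(W)$ of bad components together with the singular locus $\mathrm{Sing}(B)$ yields a closed subset $B_0\subset B$ of codimension $\ge 2$. Over $B\setminus B_0$ condition $(*_{n-1})$ now holds on every component of the rank stratification of $f^{-1}(B\setminus B_0)$. The main delicate point of the argument will be to check that Theorem~\ref{theonewplustechassavec*} applies in this ``restricted'' setting, and this is where I expect the only real obstacle: one has to observe that its proof uses $(*_k)$ only through Lemma~\ref{leimplications}, which is a local dimension count on the relative Grassmannian $\mathcal{G}_{i-1}$, so $(*_k)$ is in fact required only over the complement of the codimension $\ge i$ subset $B^i$ that the theorem is allowed to discard. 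Enlarging $B^i$ to contain $B_0$ then produces a codimension $\ge 2$ closed algebraic subset $B^2\subset B$ such that $c_{2j}(X)$ vanishes in $\mathrm{CH}(X\setminus f^{-1}(B^2))$ for every $j\ge 2$.
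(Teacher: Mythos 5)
Your proposal is correct and follows essentially the same route as the paper: apply Theorem \ref{theonewplustechassavec*} with $i=1,2$, note that normality gives smoothness in codimension $1$ and that $(*_n)$ is trivial, and handle the possible components $W$ of $X_{n-1}$ with $\dim f(W)\leq n-2$ by discarding the codimension $\geq 2$ subset $\bigcup f(W)$ of $B$ before applying the theorem. The ``delicate point'' you flag is treated in the paper exactly as you suggest, by restricting the fibration over the complement of this codimension $\geq 2$ locus and applying the theorem there.
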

\begin{proof} As $B$ is normal, its singular locus has codimension $\geq 2$, so in both cases,  in order to apply Theorem \ref{theonewplustechassavec*}, we only have to study the condition  ($*_k$)  for $k\geq n-i+1$ for $i=1,\,2$.  The assumption ($*_k$) obviously holds for $k=n$, implying the first statement. For the second statement, where we have $i=2$,  we only need to study the sets $X_{k}$ for $k=n-1$. Let $W$ be an irreducible component of $X_{n-1}$. Then the image of $W$ under $f$ has dimension $\leq n-1$, since $f_{\mid W_{\rm reg}}$ has rank $\leq n-1$ everywhere. If ${\rm dim}\,f(W)=n-1$, then ($*_{n-1}$) is satisfied by $W$ by Lemma \ref{lepour*}. There might be other components $W$ for which  ${\rm dim}\,f(W)\leq n-2$, but  we can restrict the Lagrangian fibration over the open set $B\setminus \cup_{W\subset X_{n-1},\,{\rm dim}\,f(W)\leq n-2} f(W)$ and apply Theorem \ref{theonewplustechassavec*} over this open set.
\end{proof}
\begin{proof}[Proof of Theorem \ref{theonewplustechass}] By Lemma \ref{lepour*}, the assumptions of Theorem \ref{theonewplustechass} imply that Condition ($*_k$) holds for $k\geq n-i+1$. Thus Theorem \ref{theonewplustechassavec*} implies Theorem \ref{theonewplustechass}.
\end{proof}
For the proof of Theorem \ref{theonewplustechassavec*}, we will use the following   general lemma about base change invariance of vanishing of Chern classes.
\begin{lemm} \label{lemmachaint} Let $f: Y\rightarrow M$ be a proper morphism with $M$ smooth and let $\pi: N\rightarrow M$ be a smooth  proper morphism. Denote by $Y_N $ the fibered product $Y\times_MN$ and by $$f_N:Y_N\rightarrow N,\,\pi_Y:Y_N\rightarrow Y$$ the two projections. Let
 $E$ be a vector bundle on $Y$ and let $l$ be a positive integer. Then the Chern class $c_l(E)$ vanishes in ${\rm CH}^l(Y\setminus f^{-1}(Z))$ for some codimension $k$ closed algebraic subset $Z$  of $M$  if and only if the Chern class $c_l(\pi_Y^*E)$ vanishes in ${\rm CH}^l(Y_N\setminus f_N^{-1}(Z'))$ for some codimension $k$ closed algebraic subset $Z'$ of $N$.
\end{lemm}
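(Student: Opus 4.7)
The plan is to treat the two implications separately; the forward direction is routine, while the converse is the one requiring care. For the forward direction, suppose $c_l(E)$ vanishes in $\mathrm{CH}^l(Y\setminus f^{-1}(Z))$ for some codimension $k$ closed subset $Z$ of $M$. Set $Z':=\pi^{-1}(Z)\subset N$; by flatness of $\pi$ this has codimension $k$, and by the Cartesian nature of the square the complement $Y_N\setminus f_N^{-1}(Z')$ coincides with $\pi_Y^{-1}(Y\setminus f^{-1}(Z))$. The flat pullback $\pi_Y^*$ respects the localization exact sequence and commutes with Chern classes, so the vanishing of $c_l(E)$ downstairs transports to the vanishing of $c_l(\pi_Y^*E)=\pi_Y^*c_l(E)$ in $\mathrm{CH}^l(Y_N\setminus f_N^{-1}(Z'))$.

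For the converse, I would use proper pushforward along the smooth proper morphism $\pi_Y:Y_N\to Y$, of some relative dimension $d$, together with the projection formula $(\pi_Y)_*(\pi_Y^*\alpha\cdot\eta)=\alpha\cdot(\pi_Y)_*\eta$. The given vanishing can be rewritten as $\pi_Y^*c_l(E)=j_*\gamma$ for some class $\gamma$ supported on $f_N^{-1}(Z')$. The key auxiliary object is a class $\eta\in\mathrm{CH}^d(Y_N)$ with $(\pi_Y)_*\eta=1\in\mathrm{CH}^0(Y)$: for the smooth proper morphisms arising in the intended applications (typically projective or Grassmannian bundles) such an $\eta$ is given by an appropriate monomial in the Chern classes of the tautological bundles, and in general it exists whenever $\pi$ admits a rational section. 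Multiplying and pushing forward would then give $c_l(E)=(\pi_Y)_*(\pi_Y^*c_l(E)\cdot\eta)=(\pi_Y)_*(j_*\gamma\cdot\eta)$, a cycle supported on $\pi_Y(f_N^{-1}(Z'))=f^{-1}(\pi(Z'))\subset Y$, so that taking $Z:=\pi(Z')$ provides a candidate closed subset of $M$ off which $c_l(E)$ vanishes.

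The main obstacle lies in controlling the codimension of $Z=\pi(Z')$ in $M$. Because $\pi$ has relative dimension $d$, the proper image $\pi(Z')$ only has codimension at least $k-d$ a priori, rather than the required $k$. To recover the full codimension I would refine the construction as follows: pick a section $s:M\to N$ of $\pi$ (which exists at least étale locally by smoothness, and globally in the applications of interest) which is transverse to $Z'$, so that $s^{-1}(Z')\subset M$ has codimension exactly $k$. The base-changed section $s_Y:Y\to Y_N$ of $\pi_Y$ satisfies $s_Y^*\circ\pi_Y^*=\mathrm{id}$ on $\mathrm{CH}(Y)$, so pulling back the identity $\pi_Y^*c_l(E)=j_*\gamma$ by $s_Y$ yields $c_l(E)=s_Y^*j_*\gamma$, which is supported on $s_Y^{-1}(f_N^{-1}(Z'))=f^{-1}(s^{-1}(Z'))$, a subset whose base has the correct codimension $k$ in $M$. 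When no global section exists, one would pass to an étale cover, construct the section there, and conclude via a trace argument with a multisection; this gluing step is the most delicate point.
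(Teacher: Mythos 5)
Your forward direction is fine and agrees with the paper. The gap is in the converse. You correctly discard the first attempt (push forward against a class $\eta$ with $(\pi_Y)_*\eta=1$) because the image $\pi(Z')$ only has codimension $\geq k-d$. But the proposed fix via a section $s:M\rightarrow N$ does not work as stated, for two reasons. First, a smooth proper morphism need not admit any global section, and this is not a side issue: in the paper's own application $N$ is the Grassmann bundle $G(i-1,T_{B})$, a section of which is a rank $i-1$ distribution on $B$, which need not exist. Second, and more fundamentally, even when a section exists it is a rigid object, so the hypothesis ``transverse to $Z'$'' cannot be arranged by perturbation; nothing prevents $s(M)$ from being contained in $Z'$, in which case $s^{-1}(Z')=M$ and all codimension control is lost. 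The fallback you mention (\'etale cover, then a trace argument with a multisection) is exactly the missing content, not a routine gluing step: \'etale-local sections do not glue (that is the whole obstruction), and you neither produce the multisection nor explain why it can be put in general position with respect to $Z'$.

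The paper closes precisely this point by choosing the multisection inside a movable family: let $d$ be the relative dimension of $\pi$ and $H$ a very ample line bundle on $N$, and let $N_1\subset N$ be the complete intersection of $d$ general members of $|H|$. Then $N_1$ is smooth and generically finite over $M$, and by generality $Z'_1:=Z'\cap N_1$ has codimension $k$ in $N_1$, so $Z:=\pi(Z'_1)$ has codimension $\geq k$ in $M$. Base-changing to $Y_{N_1}=Y\times_M N_1$, the hypothesis gives the vanishing of $c_l(\pi_{1,Y}^*E)$ off $f_{N_1}^{-1}(Z'_1)$, and pushing forward along the generically finite morphism $\pi_{1,Y}:Y_{N_1}\rightarrow Y$ yields $\pi_{1,Y*}(c_l(\pi_{1,Y}^*E))=({\rm deg}\,\pi_1)\,c_l(E)$ vanishing on $Y\setminus f^{-1}(Z)$; one concludes since the Chow groups have $\mathbb{Q}$-coefficients. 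The movability of the very ample linear system is what legitimately replaces the transversality you assumed for a section, and the degree factor is what your ``trace argument'' would have to produce; without these two ingredients the converse implication is not proved.
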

\begin{proof} If $c_l(E)$ vanishes in ${\rm CH}^l(Y\setminus f^{-1}(Z))$ for some codimension $k$ closed algebraic subset $Z$  of $M$, $c_l(\pi_Y^* E)$ vanishes in ${\rm CH}^l(Y_N\setminus \pi_Y^{-1}(f^{-1}(Z)))$, and, as $f\circ \pi_Y=\pi\circ f_N$, we have  $\pi_Y^{-1}(f^{-1}(Z))= f_N^{-1}(Z')$, where $Z'=\pi^{-1}(Z)$ is of codimension $k$ by smoothness of $\pi$.

Conversely, let $H$ be a very ample line bundle on $N$ and let $d$ be the relative dimension of $\pi$. Assume that the Chern class $c_l(\pi_Y^*E)$ vanishes in ${\rm CH}^l(Y_N\setminus f_N^{-1}(Z'))$ for some codimension $k$ closed algebraic subset of $N$.  Then considering the complete intersection  of  $d$ general members of $|H|$, we get a  smooth closed algebraic subset $N_1\subset N$, that maps in a generically finite way to $M$ via $\pi_1:=\pi_{\mid N_1}$, and a codimension $k$  closed algebraic subset $Z'_1\subset N_1$, whose image $Z:=\pi_1(Z'_1)\subset M$ has codimension $k$. As $c_l(\pi_Y^*E)$ vanishes in ${\rm CH}^l(Y_N\setminus f_N^{-1}(Z'))$,
we get that $c_l(\pi_{1,Y}^*E)$ vanishes in ${\rm CH}^l(Y_{N_1}\setminus f_{N_1}^{-1}(Z'_1))$, where $Y_{N_1}:= Y\times_MN_1$, with projections $\pi_{1,Y}:Y_{N_1}\rightarrow Y$, $f_{N_1}:Y_{N_1}\rightarrow N_1$. As $Z=\pi_1(Z'_1)$, we find  that
$\pi_{1,Y*}(c_l(\pi_{1,Y}^*E))$ vanishes on $Y\setminus f^{-1}(Z)$ and this concludes the proof since $\pi_{1,Y*}(c_l(\pi_{1,Y}^*E))=({\rm deg}\,\pi_1) c_l(E)$.
\end{proof}

Let now $f: X\rightarrow B$ be a Lagrangian fibration, where $X$ is projective hyper-K\"{a}hler of dimension $2n$.  Let $B':=B\setminus B_{\rm sing}$ and  $X':=f^{-1}(B')$. The variety $\mathcal{G}_{i-1}$ introduced in the previous section  is smooth of dimension $n+(n-i+1)(i-1)$ and fibered into Grassmannians over $B'$ via a smooth proper morphism  that we denote by $\pi$. It carries a tautological
subbundle $\mathcal{S}$ of rank ${n-i+1}$ of  the bundle $\pi^*\Omega_{B'}$, with fiber $V^{\perp}\subset \Omega_{B',b}$ at a point $$(b,[V]), V\subset T_{B',b},\,{\rm dim}\,V=i-1$$ of $\mathcal{G}_{i-1}$.
We denote as before by $f_\mathcal{G}:X'_\mathcal{G}\rightarrow \mathcal{G}_{i-1}$, $\pi_X:X'_\mathcal{G}\rightarrow X'$ the two projections.
We have on $X'$ the  morphism of vector bundles
$$\phi:=f^*:f^*\Omega_{B'}\rightarrow \Omega_{X'}$$ and thus, by pull-back to $X'_\mathcal{G}$ and restriction to $\mathcal{S}$, we get a morphism
\begin{eqnarray}\label{eqphiS} \phi_{\mathcal{S}}:\mathcal{S}\rightarrow \pi_X^*\Omega_{X'}.
\end{eqnarray}

We now conclude the proof of  Theorem \ref{theonewplustechassavec*}.
\begin{proof}[Proof of Theorem \ref{theonewplustechassavec*}]  By  Lemma \ref{leimplications},  assumption ($*_k$) for $k\geq n-i+1$ implies that   the morphism $\phi_{\mathcal{S}}$ is injective on $X_\mathcal{G}\setminus f_\mathcal{G}^{-1}(Z_i)$ for some closed algebraic subset $Z_i$ of $\mathcal{G}_{i-1}$ of codimension $\geq i$. As $X$ is hyper-K\"ahler, the vector bundle $\Omega_X$  carries an everywhere nondegenerate skew-symmetric pairing $\langle\,,\,\rangle$. As $f$ is Lagrangian, the morphism $f^*:f^*\Omega_{B'}\rightarrow \Omega_{X'}$ is Lagrangian and it follows that the image of the morphism $\phi_{\mathcal{S}}$, which  by definition of $Z_i$ is a subbundle of rank $n-i+1$ on $X'_\mathcal{G}\setminus f_\mathcal{G}^{-1}(Z_i)$, is totally isotropic at any point of $X'_\mathcal{G}\setminus f_\mathcal{G}^{-1}(Z_i)$. The subbundle $({\rm Im}\,\phi_{\mathcal{S}})^{\perp}$ of $\pi_X^*\Omega_{X'}$ is thus of rank $n+i-1$ on $X'_\mathcal{G}\setminus f_\mathcal{G}^{-1}(Z_i)$  and contains ${\rm Im}\,\phi_{\mathcal{S}}$. Furthermore, the quotient $\Omega_X/({\rm Im}\,\phi_{\mathcal{S}})^{\perp}$ is isomorphic to $f_\mathcal{G}^*\mathcal{S}^*$ via $\langle\,,\,\rangle$. Let
 $$\mathcal{E}:= ({\rm Im}\,\phi_{\mathcal{S}})^{\perp}/{\rm Im}\,\phi_{\mathcal{S}}.$$
 This is a vector bundle of rank $2i-2$ on $X'_\mathcal{G}\setminus f_\mathcal{G}^{-1}(Z_i)$, hence we have
 \begin{eqnarray}\label{eqannurankbis} c_{2i}(\mathcal{E})=0\,\,{\rm in}\,\,{\rm CH}(X'_\mathcal{G}\setminus f_\mathcal{G}^{-1}(Z_i)).\end{eqnarray}
We use the exact sequences
$$0\rightarrow f_\mathcal{G}^*\mathcal{S}\rightarrow ({\rm Im}\,\phi_{\mathcal{S}})^{\perp}\rightarrow \mathcal{E}\rightarrow 0,$$
$$0\rightarrow ({\rm Im}\,\phi_{\mathcal{S}})^{\perp}\rightarrow \pi_X^*\Omega_{X'}\rightarrow f_\mathcal{G}^*\mathcal{S}^*\rightarrow 0$$
explained above and the Whitney formula, which gives equalities in  ${\rm CH}(X'_\mathcal{G}\setminus f_\mathcal{G}^{-1}(Z_i))$
\begin{eqnarray}\label{eqW1bis} \pi_X^*c(\Omega_{X'})=f_\mathcal{G}^*c(\mathcal{S}^*) c(({\rm Im}\,\phi_{\mathcal{S}})^{\perp})\\
\nonumber
=f_\mathcal{G}^*c(\mathcal{S}^*)f_\mathcal{G}^*c(\mathcal{S}) c(\mathcal{E}).
\end{eqnarray}
We also get by inverting the total Chern classes in (\ref{eqW1bis})
\begin{eqnarray}\label{eqW2bis} c(\mathcal{E})=\pi_X^*c(\Omega_{X'})f_\mathcal{G}^*c(\mathcal{S}^*)^{-1}f_\mathcal{G}^*c(\mathcal{S})^{-1}\,\,{\rm in}\,\,{\rm  CH}(X_\mathcal{G}^0\setminus f_\mathcal{G}^{-1}(Z)).
\end{eqnarray}
We now conclude the proof by induction on $i$. The induction assumption tells us that for $j<i$, the Chern class $c_{2j}(\Omega_X)$ vanishes in  ${\rm CH}(X'\setminus f^{-1}(B^j))$ for some codimension $j$ closed algebraic subset of $B'$. By pull-back, the Chern class $c_{2j}(\pi_X^*\Omega_{X'})$ vanishes in  ${\rm CH}(X'_\mathcal{G}\setminus f_G^{-1}(Z_j))$ for some codimension $j$ closed algebraic subset $Z_j$ of $G_{n-i+1}$, for $j<i$.
It follows from (\ref{eqW2bis}) that, for $j<i$, the Chern class $c_{2j}(\mathcal{E})$ vanishes in  ${\rm CH}(X'_\mathcal{G}\setminus (f_\mathcal{G}^{-1}(Z_i\cup Z'_j))$ for some codimension $j$ closed algebraic subset $Z'_j$ of $\mathcal{G}_{i-1}$ (note that $\mathcal{E}$, being self-dual, has trivial odd Chern classes). It follows that any class of the form
$f_\mathcal{G}^* \alpha\cdot c_{2j}(\mathcal{E})$, where $\alpha\in {\rm CH}^{2i-2j}(\mathcal{G}_{i-1})$, vanishes in  ${\rm CH}^{2i}(X'_\mathcal{G}\setminus (f_\mathcal{G}^{-1}(Z_i\cup Z'_i))$ for some codimension $i$ closed algebraic subset $Z'_i$ of $\mathcal{G}_{i-1}$. By (\ref{eqannurankbis}), this is also true for $c_{2i}(\mathcal{E})$.  Using (\ref{eqW1})  and expanding the product in degree $2i$, we conclude  that $c_{2i}(\pi_X^*\Omega_X)$ vanishes in  ${\rm CH}(X'_\mathcal{G}\setminus f_\mathcal{G}^{-1}(Z_i\cup Z'_i)))$, where $Z_i$ and $Z'_i$ have codimension $\geq i$ in $\mathcal{G}_{i-1}$. By Lemma \ref{lemmachaint}, we conclude that
$c_{2i}(\Omega_{X'})$ vanishes in  ${\rm CH}(X'\setminus f^{-1}(B^i))$ for some codimension $i$ closed algebraic subset of $B'$. As  $B\setminus B'$ has codimension $\geq i$ in $B$ by assumption, $c_{2i}(\Omega_{X})$ also vanishes in  ${\rm CH}(X\setminus f^{-1}(B^i))$ for some codimension $i$ closed algebraic subset of $B$.
\end{proof}
\subsection{Proof of Theorem \ref{propnewauckland}}

\begin{proof}[Proof of Theorem  \ref{propnewauckland}](i) First of all, we observe that the mapping class group  acts   on the cohomology algebra $H^*(X,\mathbb{Q})$ preserving the Chern classes $c_i(X)\in H^{2i}(X,\mathbb{Q})$. Indeed, the Chern classes of a hyper-Kähler manifold are in fact determined by  its Pontryagin classes, that are topological invariants.  Furthermore, it is known by results of Verbitsky that this action restricted to  $H^2(X,\mathbb{Z})$ is that of  a finite index subgroup of $O(q)$.    The orbit under  this action of any nonzero class $l$ with $q(l)=0$ is Zariski dense in the quadric $\{q=0\}\subset H^2(X,\mathbb{C})$.  It thus suffices to show that, under the  assumptions of  Theorem \ref{propnewauckland}(i), we have
\begin{eqnarray}\label{eqvancohauckalnd}  c_{2k}(X) l^{n-k}=0\,\,{\rm in}\,\,H^{2*}(X,\mathbb{Q}),
\end{eqnarray}
since it will imply the same result with $l$ replaced by  any $\alpha\in H^2(X,\mathbb{C})$ with $q(\alpha)=0$.
A fortiori, it suffices to show that
\begin{eqnarray}\label{eqvancohauckalndchow}  c_{2k}(X) L^{n-k}=0\,\,{\rm in}\,\,{\rm CH}^*(X).
\end{eqnarray}

We choose a general complete intersection $B^{n-k}\subset B$ of $n-k$ ample hypersurfaces, whose inverse images in $X$ belong to $|dL|$ for some $d$, and denote by $X^{n-k}\subset X$ its inverse image in $X$, so that the class of $X^{n-k}$ in ${\rm CH}(X)$ is a nonzero multiple of $L^{n-k}$. As
$X^{n-k}$ is a smooth complete intersection, we have the cotangent bundle  exact sequence
\begin{eqnarray}\label{eqpremapauck}0\rightarrow \mathcal{O}_{X^{n-k}}(-dL)^{n-k}\rightarrow \Omega_{X\mid X^{n-k}}\rightarrow \Omega_{X^{n-k}}\rightarrow 0.\end{eqnarray}
As we already used several times, the image of the map $\mathcal{O}(-dL)^{n-k}\rightarrow \Omega_{X\mid X^{n-k}}$ is totally isotropic and there is thus
a dual surjective morphism
\begin{eqnarray}\label{eqsecapauck} \Omega_{X^{n-k}}\rightarrow  \mathcal{O}_{X^{n-k}}(dL)^{n-k}\rightarrow 0\end{eqnarray}
whose kernel is a rank $2k$ vector bundle on $X^{n-k}$, which will be denoted by $\mathcal{E}$.
Using (\ref{eqpremapauck}) and (\ref{eqsecapauck}), we get formulas in ${\rm CH}(X^{n-k})$
\begin{eqnarray} \label{eqthirdcapauck}c_{2k}(\Omega_{X\mid  X^{n-k}})=c_{2k}(\mathcal{E})+\sum_{l>0}\alpha_l  L^{2l} c_{2k-2l}(\mathcal{E}),\\ \nonumber
c_{2i}(\mathcal{E})=c_{2i}(\Omega_{X\mid  X^{n-k}})+ \sum_{l>0}\beta_l  L^{2l}  c_{2i-2l}(\Omega_{X\mid  X^{n-k}}),
\end{eqnarray}

As we know by Theorem \ref{theonew} that $L^{n-k+2l} c_{2k-2l}(X)=0$ for $l>0$, we deduce from  (\ref{eqthirdcapauck}) that the desired vanishing (\ref{eqvancohauckalndchow}) is a consequence of the vanishing

\begin{eqnarray} \label{eqfourcapauck} c_{2k}(\mathcal{E})=0\,\,{\rm in}\,\,{\rm CH}(X^{n-k})
\end{eqnarray}
that we prove now, using  the assumptions of Theorem \ref{propnewauckland}(i).
We denote by $f': X^{n-k}\rightarrow B^{n-k}$ the restriction of $f$. Using the fact that $f$ is a Lagrangian fibration, the differential
$${f'}^*:  {f'}^* \Omega_{B^{n-k}}\rightarrow    \Omega_{X^{n-k}}$$
induces a morphism
\begin{eqnarray}\label{eqphiprimeauck} \phi':  {f'}^* \Omega_{B^{n-k}}\rightarrow \mathcal{E}.
\end{eqnarray}
 Note that $\mathcal{E}$ has an induced nondegenerate skew-symmetric form and that $\phi'$ is a Lagrangian morphism of vector bundles in the sense of Definition \ref{defimorlag}.
Let  $$\pi: P:=\mathbb{P}(\Omega_{B^{n-k}})\rightarrow B^{n-k},\,\,X^{n-k}_P:=P\times_{B^{n-k}} X^{n-k}$$ and denote by $$f'_P:X^{n-k}_P\rightarrow P,\,\,\pi_X:X^{n-k}_P\rightarrow X$$ the two projections. Let
$\mathcal{S}\subset \pi^*\Omega_{B^{n-k}}$ be the rank $1$ tautological subbundle. The morphism $\phi'$ induces
a morphism $\phi'':{f'_P}^*\mathcal{S}\rightarrow  \pi_X^*\mathcal{E}$, or equivalently  a section
\begin{eqnarray}\label{eqalphahauck} \alpha\in H^0(X_P,  {f'_P}^*\mathcal{S}^{-1}\otimes \pi_X^*\mathcal{E}).\end{eqnarray}
The section $\alpha$ is not transverse and we are going to describe its vanishing locus $Z(\alpha)$ below. After applying an excess formula \`{a} la Fulton \cite[Section 6.3]{fulton}, the corrected vanishing locus  will have  class
\begin{eqnarray}\label{eqZalpahauck}Z(\alpha)_{\rm vir}=c_{2k}({f'_P}^*\mathcal{S}^{-1}\otimes \pi_X^*\mathcal{E})=\sum_{0\leq 2i\leq 2k} {f'_P}^*(c_{1}(\mathcal{S}^{-1})^{2i})\pi_X^*c_{2k-2i}(\mathcal{E}).\end{eqnarray}
Recall that $\pi_X: X_P\rightarrow X$ is the  projectivized bundle $\mathbb{P}({f'}^*\Omega_{B^{n-k}})$, polarized by ${f'_P}^*\mathcal{S}^{-1}$. We note here for future use  that, as we assumed that $B$ is smooth, it is isomorphic to $\mathbb{P}^n$ by \cite{hwang}, hence  we can assume that $B^{n-k}$ is a $\mathbb{P}^{k}$, and  $\mathcal{S}^{-1}$ is very ample on $P$, with space of global sections $H^0(\mathbb{P}^{k},T_{\mathbb{P}^{k}})$. We  get from (\ref{eqZalpahauck})

\begin{eqnarray}\label{eqZalpahauckpousse}  \pi_{X*}(Z(\alpha)_{\rm vir}c_1({f'_P}^*\mathcal{S}^{-1})^{k-1})=
c_{2k}(\mathcal{E})+\sum_{i>0} {f'}^*s_{2i}(\Omega_{B^{n-k}})c_{2k-2i}(\mathcal{E}),
\end{eqnarray}
where the $s_{2i}$'s denote the Segre classes.
One proves as in Theorem \ref{theonew} that $${f'}^*s_{2i}(\Omega_{B^{n-k}})c_{2k-2i}(\mathcal{E})=0\,\,{\rm in}\,\,{\rm CH}^{2k}(X^{n-k})$$ for $i>0$,  so we get in fact the equality
  \begin{eqnarray}\label{eqZalpahauckpoussecorrect} \pi_{X*}(Z(\alpha)_{\rm vir} {f'_P}^*(c_1(\mathcal{S}^{-1})^{k-1}))=
c_{2k}(\mathcal{E})   \,\,{\rm in}\,\, {\rm CH}^{2k}(X^{n-k}).
\end{eqnarray}
We now describe the locus $Z(\alpha)$. We observe that $X^{n-k}$ is stratified by strata
$$X^{n-k}_l:=X^{n-k}\cap X_{n-k+l}$$ where the rank of $f_*$ is $n-k+l$, or equivalently the rank of $f'_*$ is $l$. For $l=0$, as we assumed that  ${\rm dim}\,f(X_{n-k})<k$, $X^{n-k}_0$ is  empty. We will now analyze the contributions of the strata $X_l^{n-k}$ for $l>0$ and show that they all vanish. For $l>0$, our assumption is that $X_{l+n-k}$ has dimension $2l+2n-2k$ and its image $B_l^{n-k}$ in $B$ has dimension $l+n-k$, so we have
${\rm dim}\,X^{n-k}_l=2l+n-k$ and its image in $B$ has dimension $l$.
We now observe that, denoting $X^{n-k}_{P,l}$ the inverse image $\pi_X^{-1}(X_l^{n-k})$,  the section
$\alpha$ of  (\ref{eqalphahauck}) vanishes along the projective subbundle
$$\mathbb{P}(\mathcal{K}_l)\subset \mathbb{P}(({f'}^*\Omega_{B^{n-k}})_{\mid X^{n-k}_l})= X^{n-k}_{P,l},$$ where the vector bundle
$\mathcal{K}_l$ on $X^{n-k}_l$ is the kernel of ${f'}^*$, hence  has rank $k-l$. We thus conclude that  the intersection $Z(\alpha)_l$ of  the locus $Z(\alpha)$  with the   stratum $X_{P,l}^{n-k}$ has dimension
\begin{eqnarray}\label{eqdim8mai}{\rm dim}\,Z(\alpha)_l={\rm dim}\,X^{n-k}_l+ {\rm rk}\,\mathcal{K}_l-1=2l+n-k+k-l-1=n+l-1.
\end{eqnarray}  Note that the expected dimension of $Z(\alpha)$ is $${\rm dim}\,X^{n-k}_P-2k=2n-(n-k)+k-1-2k=n-1.$$

The key point is now the following: generically along $X_l^{n-k}$, the rank $k-l$ vector bundle $\mathcal{K}_l$ comes from the rank $k-l$  subbundle on $B^{n-k}_l$ with fiber ${\rm Ker}\,(\Omega_{B^{n-k}\mid B_l^{n-k}}\rightarrow \Omega_{B^{n-k}_l})$. This follows indeed from our assumptions that $${\rm dim}\,X_{l+n-k}=2(n+l-k),\,{\rm dim}\,B_{l+n-k}=n+l-k,$$ which implies that the generic rank of $f_{\mid X_{n-k+l}}$ is $n-k+l$ (cf. Question \ref{questionnewauckland}), hence the generic rank of $f'_{\mid X^{n-k}_l}$ is $l$.
It follows from this fact   that the image  $f'_P(Z(\alpha)\cap X_l^{n-k})$ in $P$ has dimension at most
$${\rm dim}\,B^{n-k}_l+k-l-1=l+k-l-1=k-1.$$
 When we intersect  $Z(\alpha)$ with $k-1$ hypersurfaces in  $|{f'_P}^*\mathcal{S}^{-1}|$ coming from $P$, we thus get a subvariety  $Z'(\alpha)$ which is supported over finitely many points in $P$, and its image
in  $X^{n-k}$ under $\pi_X$ has  finitely many irreducible components, all contained in fibers of $f'$. More precisely, the   fibers of $Z'(\alpha)$  supported over $B_l$ are supported over generic points of $B_l^{n-k}$, contained in  $X_l^{n-k}$ and  of dimension $n-k+l$ by (\ref{eqdim8mai}).
Finally, recall that, by formula (\ref{eqZalpahauckpoussecorrect}),  we have  to compute
$ \pi_{X*}(Z(\alpha)_{\rm vir} {f'_P}^*(c_1(\mathcal{S}^{-1})^{k-1}))$ and show that it is $0$ in ${\rm CH}(X^{n-k})$. Here the cycle $Z(\alpha)_{\rm vir}$ is supported on $Z(\alpha)$ and computed by applied Fulton's refined intersection formula \cite{fulton}. Now we observe that we can replace the term  $Z(\alpha)_{\rm vir} {f'_P}^*(c_1(\mathcal{S}^{-1})^{k-1})$ appearing in this formula by the virtual class of  $ Z'(\alpha) $. As we described $Z'(\alpha)$ as a disjoint union of components  $Z'(\alpha)_{l, b_i}$ of dimension $n-k+l$ contained in fibers $X^{n-k}_{P,l,b_i}$, it suffices to show that the  contribution of each
$Z'(\alpha)_{l, b_i}$ to $ Z'(\alpha)_{\rm vir} $ is $0$ for $l>0$. This follows from the following
\begin{claim}  The excess bundle for the section $\alpha$ is trivial of rank $l$ along the  fiber $X^{n-k}_{P,b}\cap Z(\alpha)$, for  a general point $b\in B^{n-k}_l$.
\end{claim}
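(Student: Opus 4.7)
The plan is to identify the excess bundle $E$ along the component $\mathbb{P}(\mathcal{K}_l)\subset Z(\alpha)\cap X^{n-k}_{P,l}$ explicitly, using the Lagrangian property of $\phi'$, and then to exploit the dimension hypothesis to check that, on the fiber over a general $b\in B^{n-k}_l$, $E$ comes from a trivial rank $l$ bundle pulled back from the stratum direction.

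The first step is to compute the image of $d\alpha$ at a general point $(x_0,[\omega_0])\in Z(\alpha)\cap X^{n-k}_{P,l}$. The variation in the $[\omega]$-direction at fixed $x_0$ produces the subspace ${\rm Im}\,\phi'_{x_0}\otimes\omega_0^*\subset\mathcal{E}_{x_0}\otimes\omega_0^*$, which is isotropic by the Lagrangian property. The variation in an $x$-direction $\xi$ produces $D_\xi\phi'(\omega_0)\in\mathcal{E}_{x_0}$; differentiating the identity $\langle\phi'_x(\omega_0),\phi'_x(\omega)\rangle\equiv 0$ in the direction $\xi$ and using $\phi'_{x_0}(\omega_0)=0$ gives
$$\langle D_\xi\phi'(\omega_0),\phi'_{x_0}(\omega)\rangle=0\quad\text{for all }\omega\in\Omega_{B^{n-k},b}.$$
Hence ${\rm Im}\,d\alpha\subset({\rm Im}\,\phi'_{x_0})^\perp\otimes\omega_0^*$, a subspace of rank $2k-l$. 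Because $\mathbb{P}(\mathcal{K}_l)$ has codimension exactly $2k-l$ in $X^{n-k}_P$, this inclusion is an equality at generic points, and the excess bundle is
$$E\;\cong\;\bigl(\mathcal{E}/({\rm Im}\,\phi')^\perp\bigr)\otimes\mathcal{S}^{-1}\big|_Z\;\cong\;({\rm Im}\,\phi')^*\otimes\mathcal{S}^{-1}\big|_Z,$$
using the symplectic pairing on $\mathcal{E}$ to identify $\mathcal{E}/({\rm Im}\,\phi')^\perp$ with $({\rm Im}\,\phi')^*$.

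Next I would restrict to the fiber over a general $b\in B^{n-k}_l$. The dimension assumption forces the generic rank of $f_{\mid X_{n-k+l}}$ to equal $n-k+l$, so ${\rm Im}\,f'_{*,x_0}=T_{B^{n-k}_l,b}$ at a general $x_0\in X^{n-k}_{l,b}$; dually $\mathcal{K}_l|_{X^{n-k}_{l,b}}=N^*_{B^{n-k}_l/B^{n-k},b}$ is a constant vector space and the induced map $\Omega_{B^{n-k}_l,b}\otimes\mathcal{O}_{X^{n-k}_{l,b}}\to{\rm Im}\,\phi'$ is a trivialisation. Therefore $({\rm Im}\,\phi')^*|_{X^{n-k}_{l,b}}$ is the trivial bundle of rank $l$ with fiber $T_{B^{n-k}_l,b}$, and the fiber $X^{n-k}_{P,b}\cap Z(\alpha)=X^{n-k}_{l,b}\times\mathbb{P}(N^*_{B^{n-k}_l/B^{n-k},b})$ supports an excess bundle $E$ that is pulled back from a trivial rank $l$ bundle on the stratum factor; this is the triviality claimed. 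Together with the $c_1(\mathcal{S}^{-1})^{k-1}$ factor already present in the formula for $Z'(\alpha)_{\rm vir}$, this immediately forces the contribution of the component over $b$ to vanish for $l>0$ by a dimension count on the $\mathbb{P}(N^*)$-factor.

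The main obstacle will be justifying Step 1, namely that $\xi\mapsto D_\xi\phi'(\omega_0)\bmod{\rm Im}\,\phi'_{x_0}$ has rank exactly $2k-2l$, so that ${\rm Im}\,d\alpha$ fills all of $({\rm Im}\,\phi'_{x_0})^\perp\otimes\omega_0^*$. I would control this using the free commutative Hamiltonian action of dimension $l+n-k$ constructed in the proof of Theorem \ref{theopourrankloci} and in Remark \ref{remagroupaction}: its orbits are open in $X^{n-k}_{l,b}$, and the $l$-dimensional subaction coming from functions on $B^{n-k}_l$ exhausts precisely the $l$-dimensional kernel of $d\alpha$ modulo the tangent to $X^{n-k}_{l,b}$, thereby confirming that the cokernel rank is $l$ and no more.
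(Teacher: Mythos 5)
Your geometric picture is essentially the right one, but there is a genuine gap at exactly the step you flag. The inclusion ${\rm Im}\,d\alpha\subset({\rm Im}\,\phi'_{x_0})^{\perp}\otimes\omega_0^*$ is correct (it is the infinitesimal form of the Lagrangian condition), but the asserted equality does not follow from the fact that $\mathbb{P}(\mathcal{K}_l)$ has codimension $2k-l$ in $X^{n-k}_P$: the rank of the intrinsic derivative along a component of $Z(\alpha)$ is bounded above, not below, by the codimension of that component (a section vanishing to higher order gives a zero locus of small codimension with small derivative rank). Moreover, even granting the generic equality, identifying the contribution to $Z(\alpha)_{\rm vir}$ with $c_l({\rm coker}\,d\alpha)$ capped with the component requires in addition that the component be generically reduced with linear normal cone, which you never address. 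Your proposed repair via the Hamiltonian action only gives the easy half: the vertical $\mathbb{C}^{n-k+l}$-action explains why the kernel of $d\alpha$ contains the tangent space of the component, and as phrased (``the $l$-dimensional kernel of $d\alpha$ modulo the tangent to $X^{n-k}_{l,b}$'') it does not produce the missing lower bound on the rank.

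The paper sidesteps all of this, and the device is worth noting because it also repairs your argument: instead of working with $({\rm Im}\,\phi')^{\perp}$, which is only a vector bundle on the stratum where the rank of $\phi'$ is constant, one chooses $l$ functions $g_1,\dots,g_l$ on $B^{n-k}$ whose differentials are independent on $B^{n-k}_l$ at the general point $b$; their pull-backs span a trivial rank $l$ subbundle $\mathcal{F}\subset\mathcal{E}$ on a whole neighborhood $U^{n-k}_b$ of the fiber, and the Lagrangian property shows that $\phi'$, hence $\alpha$, factors through ${\rm Ker}(q:\mathcal{E}\to\mathcal{F}^*)$, a subbundle of constant corank $l$ on $U^{n-k}_b$. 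The excess contribution is then controlled by the quotient $\mathcal{F}^*$, with no transversality, rank, or normal-cone statement needed. On the fiber your $({\rm Im}\,\phi')^{\perp}$ coincides with ${\rm Ker}\,q$, and your trivialization of $({\rm Im}\,\phi')^*$ via the constancy of ${\rm Im}\,f'_*$ (forced by the dimension hypotheses) is the same mechanism the paper uses for $\mathcal{F}$; likewise, keeping the extra twist by $\mathcal{S}^{-1}$ and killing it by the dimension count on the $\mathbb{P}(N^*)$-factor together with the $c_1(\mathcal{S}^{-1})^{k-1}$ already present is a legitimate, indeed slightly more careful, way to conclude---but it only becomes available once the factorization through a genuine subbundle on a neighborhood of the fiber is in place.
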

\begin{proof} Let $b\in B^{n-k}_l$ be  a general point. Choose $l$ functions $g_1,\ldots, g_l$  on $B^{n-k}$ whose differentials at $b$  are independent on $B^{n-k}_l$. Then their pull-backs to $X^{n-k}$ have independent differentials along  $X_{l,b}^{n-k}$, hence in a Zariski open neighborhood $U_b^{n-k}$ of $X_{l,b}^{n-k}$ in $X^{n-k}$, since $b\in B^{n-k}_l$ is a general point so the morphism $X^{n-k}_l\rightarrow B^{n-k}_l$ has to be of rank $l$ at any point over $b$. Let $\mathcal{F}$ be the trivial rank $l$ vector bundle on $U_b^{n-k}$ generated by these differentials. We have   a natural morphism $\mathcal{F}\rightarrow \mathcal{E}$ induced by the inclusion $\mathcal{F}\rightarrow \Omega_{U^{n-k}_b}$, using the fact that $f$ is Lagrangian. By duality, this  induces a quotient map
$$q: \mathcal{E}\rightarrow \mathcal{F}^*\rightarrow 0$$
on $U_b^{n-k}$.
The morphism
$${f'}^*: {f'}^*\Omega_{B^{n-k}}\rightarrow \mathcal{E},$$
being Lagrangian,
takes value in ${\rm Ker}\,q$. This proves that the excess bundle identifies to $\mathcal{F}^*$ along $X^{n-k}_{l,b}$. This proves the result since $\mathcal{F}$ is trivial on $U^{n-k}_b$, hence on $X^{n-k}_b$.
\end{proof}
 The claim  concludes the proof since, as we already mentioned, by our  assumption  that ${\rm dim}\,B_{n-k}<n-k$, $B^{n-k}_0$ is empty so  there is no contribution from the stratum where $l=0$.
\end{proof}
\begin{proof}[Proof of Theorem \ref{propnewauckland}(ii) and (iii)] Statement (iii) is a particular case of (ii). For the proof of (ii), we just follow the analysis made previously, except that now,  the stratum $X^{n-k}_0$ is not empty and of the right dimension by (\ref{eqdim8mai}). In particular $B^{n-k}_0$ is a  non-empty set of points and  $X^{n-k}_0$ has dimension $n-k$, and there is no excess in the  contribution of the  stratum $X^{n-k}_0$  to  the cycle
$p_{X*} (c_1(\mathcal{S}^{-1})^{k-1} Z(\alpha)_{\rm vir})$, and this stratum contributes via an effective non-empty cycle of dimension $n-k$. The  contributions of the other strata are zero as explained in the previous proof. This shows that the cycle $c_{2k}(X) L^{n-k}$ is effective and nontrivial.
\end{proof}
\section{Proof of Theorems \ref{theoLSV} and \ref{theoriess}}
\subsection{The cycle   $c_2^2L^{n-1}$ \label{secc2square}}
We study in this section the cycle  $L^{n-1}c_2(X)^2$, where $L$ is a Lagrangian line bundle on a projective hyper-K\"{a}hler $2n$-fold $X$, whose  vanishing is predicted by Conjecture \ref{conjvan}.
We use some of the constructions and notations introduced in the previous sections. We denote by $C=B^{n-1}\subset B$ the (smooth) complete intersection of $n-1$ ample general hypersurfaces in a very ample  linear system $|H|$ on $B$ with $f^*H=L^{\otimes d}$ and  $X_C=X^{n-1}\subset X$ its inverse image in $X$ with restricted morphism
$f':X_C\rightarrow C$.  We have on $X_C$ the rank $2$ vector bundle $\mathcal{E}$   with trivial determinant, which is the kernel  of the natural surjective  morphism  $\Omega_{X_C}\rightarrow \mathcal{O}_{X_C}(dL)^{n-1}$ of (\ref{eqsecapauck}), and has an induced symplectic structure (see also  (\ref{eqcalEnew})).
We only  study  the case where there are no nonreduced fibers in codimension $1$. The vanishing locus of the  morphism
${f'}^*: {f'}^*\Omega_C\rightarrow \Omega_{X_C}$
thus has  codimension $\geq 2$ in $X_C$. Furthermore, as we saw already, this morphism takes values in $\mathcal{E}$ and its vanishing locus $Z$ thus represents the class $c_2(\mathcal{E}(-{f'}^*K_C))$. Note that $Z$ is supported on fibers of $f'$, on which ${f'}^*K_C$ is trivial. Arguing as in the proof of Theorem \ref{propnewauckland}, that is, comparing $j_*c_2(\mathcal{E})$, where $j$ is the inclusion of $X_C$ in $X$,  and $L^{n-1}c_2(X)$,
we get
\begin{lemm} \label{proppourc22}  The class $L^{n-1}c_2(X)^2\in {\rm CH}^{n+3}(X)$ is proportional to  the class $j_{Z*}(c_2(\mathcal{E})_{\mid Z})$, where $Z$ is as above  the critical locus of $f'$ and $j_Z:Z\rightarrow X$ is the inclusion map.
\end{lemm}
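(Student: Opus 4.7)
The plan is to reduce the statement to two independent computations on $X_C$: first, expressing $c_2(\mathcal{E})^2$ in terms of $c_2(X)^2$ up to error terms that all vanish after push-forward to $X$; second, identifying the cycle class of $Z$ in $\mathrm{CH}(X_C)$ as exactly $c_2(\mathcal{E})$, which allows me to invoke the projection formula to realise $j_* c_2(\mathcal{E})^2$ as $j_{Z*}(c_2(\mathcal{E})_{|Z})$.

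For the first step, the exact sequences $0\to \mathcal{O}(-dL)^{n-1}\to (\mathrm{Im}\,\phi)^\perp\to \mathcal{E}\to 0$ and $0\to (\mathrm{Im}\,\phi)^\perp\to\Omega_{X|X_C}\to \mathcal{O}(dL)^{n-1}\to 0$ from the proof of Theorem~\ref{theonew} give, via the Whitney formula,
\[ c(\Omega_{X|X_C}) = (1-d^2L^2)^{n-1}\,c(\mathcal{E}), \]
so $c_2(\mathcal{E}) = c_2(\Omega_{X|X_C}) + (n-1)d^2L^2$ in $\mathrm{CH}(X_C)$. Squaring and pushing forward by $j\colon X_C\hookrightarrow X$, the projection formula plus $[X_C]\propto L^{n-1}$ gives
\[ j_*c_2(\mathcal{E})^2 \;\propto\; L^{n-1}c_2(X)^2 + 2(n-1)d^2\,L^{n+1}c_2(X) + (n-1)^2d^4\,L^{n+3}. \]
Since $L$ is Lagrangian we have $L^{n+1}=0$ in $\mathrm{CH}(X)$ by (\ref{relverbchow}), which kills the last two terms. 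Hence $L^{n-1}c_2(X)^2 \propto j_*c_2(\mathcal{E})^2$.

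For the second step, I note that $C=B^{n-1}$ is a smooth curve (complete intersection of $n-1$ ample divisors in the $n$-dimensional base), so $f'^*\Omega_C$ is a line bundle on $X_C$. The Lagrangian morphism $\phi'\colon f'^*\Omega_C\to \mathcal{E}$ is a morphism from a line bundle to the rank-$2$ bundle $\mathcal{E}$, and its vanishing locus is exactly the critical locus $Z$ of $f'$. Viewing $\phi'$ as a section of $\mathcal{E}\otimes f'^*T_C$, its class as a degeneracy locus is
\[ [Z] = c_2(\mathcal{E}\otimes f'^*T_C) = c_2(\mathcal{E}) + f'^*(c_1(T_C)^2) = c_2(\mathcal{E}), \]
using that $\mathcal{E}$ has trivial determinant and $\dim C=1$ forces $c_1(T_C)^2=0$.

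Combining the two, the projection formula for the closed embedding $\tilde{j}\colon Z\hookrightarrow X_C$ yields
\[ c_2(\mathcal{E})^2 \;=\; c_2(\mathcal{E})\cdot [Z] \;=\; \tilde{j}_*\bigl(c_2(\mathcal{E})_{|Z}\bigr) \quad\text{in }\mathrm{CH}(X_C), \]
and composing with $j_*$ gives $j_*c_2(\mathcal{E})^2 = j_{Z*}(c_2(\mathcal{E})_{|Z})$, completing the proof. The only point requiring care is the identification $[Z]=c_2(\mathcal{E})$: under the standing assumption of no non-reduced fibres in codimension $1$, the critical locus $Z$ has the expected codimension $2$ in $X_C$, so the identification is a genuine cycle-class equality rather than a virtual one; otherwise one must work with the refined Fulton class, which still equals $c_2(\mathcal{E})$ and is what ultimately enters the projection-formula identity.
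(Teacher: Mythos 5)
Your proof is correct and follows essentially the same route as the paper: you identify the critical locus $Z$ with the zero scheme of the section of $\mathcal{E}\otimes {f'}^*T_C$ induced by $df'$, so that $[Z]=c_2(\mathcal{E})$ under the standing codimension-$2$ assumption, and you compare $j_*c_2(\mathcal{E})^2$ with $L^{n-1}c_2(X)^2$ via the Whitney formula, killing the correction terms with $L^{n+1}=0$. The only cosmetic difference is that you remove the twist globally using $c_1(\mathcal{E})=0$ and $c_1(T_C)^2=0$ on the curve $C$, whereas the paper observes instead that ${f'}^*K_C$ is trivial on the fibers supporting $Z$.
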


The geometry of $Z$ and $\mathcal{E}$ is very interesting and a particular case of a phenomenon studied in \cite{claudon}, \cite{druel}. Indeed, we can apply Theorem \ref{theopourrankloci} with $k=n-1$ since for $k=n-1$,  the equalities in (1) and (2) hold in codimension $1$ on $B$ once there are no nonreduced fibers in codimension $1$.  We thus  conclude that $Z$ is a finite union of abelian varieties, which are projective leaves of a foliation.
The conormal bundle of a leaf $F$ of a foliation $\mathcal{F}:=\mathcal{E}^{\perp}$ on a manifold $Y$ admits an integrable holomorphic connection, defined as the composition
$$\Omega_Y\supset \mathcal{E}\stackrel{d}{\rightarrow} \mathcal{E}\wedge\Omega_Y\rightarrow \mathcal{E}\otimes \Omega_{F},$$
 so it is a flat bundle. Unfortunately, we cannot say much more about it, since by the suspension construction described in
 \cite[Example 9.1]{druel}, any flat vector bundle $E$ on an abelian variety $A$ is isomorphic to the conormal bundle of a foliation on the total space of $E$, of which $A$ is a leaf. Furthermore, the Chern classes of a flat holomorphic vector bundle with trivial determinant on an abelian variety $A$ can be  nontrivial in ${\rm CH}(A)$. For example, for any line bundle $M\in {\rm Pic}^0(A)$, the vector bundle $M\oplus M^{-1}$ has trivial determinant but its second Chern class $-M^2\in{\rm CH}^2(A)$ is in general nontorsion. In fact, once the dimension of $A$ is at least $2$, the subgroup of ${\rm CH}^2(A)$ generated by these classes is infinite dimensional in the Mumford sense, by Mumford's theorem \cite{mumford}.
In the present situation, we have
\begin{lemm}\label{lepouE} For each connected component $Z_i\subset X_c$ of $Z$, the vector bundle $\mathcal{E}_{\mid Z_i}$ is isomorphic to either

(i) a direct sum $M_i\oplus M_i^{-1}$ for some line bundle $M_i\in{\rm Pic}^0(Z_i)$, or

(ii)  a tensor product $M_i\otimes U$ for some $2$-torsion line bundle $M_i$ on $Z_i$ and rank $2$ vector bundle $U$ which is a an extension of the trivial line bundle by itself.
\end{lemm}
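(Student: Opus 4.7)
The plan is to identify $\mathcal{E}_{\mid Z_i}$ as a flat symplectic rank-$2$ bundle on the abelian variety $Z_i$, and then to classify such bundles using that $\pi_1(Z_i)$ is abelian. By Theorem \ref{theopourrankloci}(3) together with the preceding discussion, each $Z_i$ is a projective complex torus of dimension $n-1$, appearing as a compact leaf of the foliation $\mathcal{F} = \mathcal{E}^\perp$. The Bott connection on the conormal bundle of a leaf thus equips $\mathcal{E}_{\mid Z_i}$ with a canonical integrable holomorphic connection $\nabla^B$. A key additional point is that $\nabla^B$ preserves the symplectic form $\omega_\mathcal{E}$ inherited from $\sigma$: in local Darboux coordinates adapted to the Lagrangian foliation, $\omega_\mathcal{E}$ arises by symplectic reduction from $\sigma$ and is constant in a frame adapted to the conormal direction, hence Bott-parallel. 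Equivalently, the induced flat connection on $\det \mathcal{E}_{\mid Z_i}\cong \mathcal{O}_{Z_i}$ is the trivial one.

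Granting this, Riemann--Hilbert identifies $(\mathcal{E}_{\mid Z_i}, \nabla^B, \omega_\mathcal{E})$ with a representation
$$\rho: \pi_1(Z_i) \longrightarrow \mathrm{Sp}(\mathcal{E}_{\mid Z_i,0}) \cong \mathrm{SL}_2(\mathbb{C}),$$
and since $\pi_1(Z_i) \cong \mathbb{Z}^{2(n-1)}$ is abelian, the image of $\rho$ is an abelian subgroup of $\mathrm{SL}_2(\mathbb{C})$. An elementary centralizer argument shows that such a subgroup is, up to conjugation, either (a) contained in the diagonal torus, or (b) contained in $\{\pm I\}\cdot U$ where $U$ is the upper unitriangular subgroup. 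Indeed, an element with two distinct eigenvalues has torus centralizer; if every element has a repeated eigenvalue and some element is non-semisimple, commutativity with it forces the diagonal part of every other element to be $\pm 1$ and places them all in a common Borel.

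In case (a), $\rho = \chi \oplus \chi^{-1}$ for a character $\chi: \pi_1(Z_i)\to \mathbb{C}^*$, so $\mathcal{E}_{\mid Z_i} \cong M_i \oplus M_i^{-1}$ for the flat holomorphic line bundle $M_i$ of $\chi$; as $H^2(Z_i,\mathbb{Z})$ is torsion-free and flat bundles have torsion first Chern class, $M_i \in \mathrm{Pic}^0(Z_i)$. In case (b), writing $\rho(g) = \epsilon(g)\bigl(\begin{smallmatrix}1 & a(g) \\ 0 & 1\end{smallmatrix}\bigr)$ with $\epsilon: \pi_1(Z_i)\to \{\pm 1\}$ and $a: \pi_1(Z_i) \to \mathbb{C}$, homomorphicity of $\rho$ forces $\epsilon$ to be a character and $a$ to be additive. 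We then obtain $\mathcal{E}_{\mid Z_i} \cong M_i \otimes U$, where $M_i$ is the $2$-torsion line bundle associated with $\epsilon$ (the square $\epsilon^2 \equiv 1$ giving $M_i^{\otimes 2}\cong \mathcal{O}_{Z_i}$) and $U$ is the rank-$2$ flat bundle corresponding to the unipotent factor, which fits into an extension $0 \to \mathcal{O}_{Z_i} \to U \to \mathcal{O}_{Z_i} \to 0$.

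The principal obstacle is the verification that $\omega_\mathcal{E}$ is $\nabla^B$-parallel; this is what ensures that $\rho$ lands in $\mathrm{SL}_2$ rather than only in $\mathrm{GL}_2$ with holomorphically trivial (but possibly non-trivially flat) determinant, and it requires unpacking the symplectic-reduction description of $\mathcal{E}$ as $N^\perp/N$ on $X_C$ and performing a local computation along the leaves. Once this parallelism is established, the remainder of the proof is the representation-theoretic dichotomy above.
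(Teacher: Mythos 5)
Your route is genuinely different from the paper's. The paper proves the lemma in one line: by Theorem \ref{theopourrankloci}(4), the translations of the torus $Z_i$ (coming from the vertical Hamiltonian action) extend to a neighbourhood of $Z_i$ in $X$, so $\mathcal{E}_{\mid Z_i}$ is a \emph{homogeneous} bundle, and Mukai's classification \cite{mukai} of homogeneous rank $2$ bundles on abelian varieties gives exactly the dichotomy, the triviality of $\det\mathcal{E}$ forcing the second summand to be $M_i^{-1}$ in case (i) and $M_i$ to be $2$-torsion in case (ii). You instead use the flatness of $\mathcal{E}_{\mid Z_i}$ as the conormal bundle of a compact leaf (a fact the paper records just before the lemma, via the Bott connection) together with the classification of abelian subgroups of ${\rm SL}_2(\mathbb{C})$; in rank $2$ over an abelian fundamental group the two classifications coincide, and your centralizer dichotomy is correct. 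What the paper's argument buys is brevity and no connection computation; what yours buys is the explicit flat structure, which the paper discusses separately (while warning it does not by itself control the Chow-theoretic contribution).

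The one incomplete point is the one you flag yourself: the Bott-parallelism of the symplectic form on $\mathcal{E}$ is only asserted heuristically, and as written your reduction to ${\rm Sp}_2={\rm SL}_2$ rests on it. But you do not need it. All the lemma requires is that $\det(\mathcal{E}_{\mid Z_i})$ is holomorphically trivial, which is immediate from the nondegenerate skew pairing on the rank $2$ bundle $\mathcal{E}$ and involves no flatness. Run your centralizer argument in ${\rm GL}_2(\mathbb{C})$: either $\rho=\chi_1\oplus\chi_2$, so $\mathcal{E}_{\mid Z_i}\cong L_{\chi_1}\oplus L_{\chi_2}$ with both flat line bundles lying in ${\rm Pic}^0(Z_i)$ (vanishing Atiyah class plus torsion-free $H^2(Z_i,\mathbb{Z})$), and $\det\cong\mathcal{O}_{Z_i}$ gives $L_{\chi_2}\cong L_{\chi_1}^{-1}$, i.e.\ case (i); or $\rho$ takes values in $\{aI+bN\}$ for a fixed nonzero nilpotent $N$, hence $\rho=\lambda\otimes({\rm unipotent})$ for a character $\lambda$, so $\mathcal{E}_{\mid Z_i}\cong M_i\otimes U$ with $M_i=L_\lambda\in{\rm Pic}^0(Z_i)$ and $U$ an extension of $\mathcal{O}_{Z_i}$ by itself, and $\mathcal{O}_{Z_i}\cong\det\mathcal{E}_{\mid Z_i}\cong M_i^{\otimes 2}$ makes $M_i$ $2$-torsion, i.e.\ case (ii). With this modification your proof closes; if you prefer to keep the ${\rm SL}_2$ reduction, the parallelism statement does require the local computation you allude to and should be written out.
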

\begin{proof} The vector bundle $\mathcal{E}_{\mid Z_i}$  is  homogeneous by Theorem \ref{theopourrankloci}. Homogeneous rank $2$ vector bundles on abelian varieties are classified in \cite{mukai} and are direct sums of homogeneous (that is topologically trivial) line bundles, which gives the first case, or of the form $M_i\otimes U$ as in (ii). In case (ii), $M_i$ has to be a $2$-torsion line bundle since ${\rm det}\,\mathcal{E}$ is trivial.
\end{proof}
In case (ii), the class $c_2(\mathcal{E}_{\mid Z_i})$ is trivial, and thus the contribution of $Z_i$ to the class $c_2(X)L^{n-1}$ is zero.  Unfortunately, case (i) is the most naturally encountered. We discuss another more classical viewpoint on the line bundle $M_i$ in  the following Proposition  \ref{lepourMi14mai}, showing that there   is no restriction on  the topologically trivial  line bundle $M_i$ in (i). Let $c\in C$ be a critical value of $f'$, let $X_c$ be the fiber $f^{-1}(c)$ and $Z_c\subset X_c$ be the singular locus  of  $X_c$. By \cite{hwangoguiso}, the normalization $\widehat{X}_c$ is smooth and it is a $\mathbb{P}^1$-bundle over an \'{e}tale cover of an  abelian variety $A_c$,  via its Albanese map $$a:\widehat{X}_c\rightarrow A_c.$$ The inverse image of $Z_c$ is a finite union of abelian varieties contained in
$  \widehat{X}_c$ which are multisections of $a$ and \'{e}tale over $A_c$. These facts  follow   again from the existence of the local vertical  free group action of $\mathbb{C}^{n-1}$ on   a neighborhood of $X_c$ in $X_C$, which is transitive on the components of $Z_c$, and  provides automorphisms of $X_c$ which lift to $\widehat{X}_c$.
\begin{prop} \label{lepourMi14mai}  Assume we are in Case (i) of Lemma \ref{lepouE}, and that  $M_i$ is not  torsion,  $X_c$ is irreducible,    $X_c$ has at  most   two branches  along $Z_i$, and the branches  are smooth. Then the inverse image $\widehat{Z}_i:=n^{-1}(Z_i)\subset \widehat{X}_c$ is a  divisor which has  degree $2$ over $Z_i$  and  there is a canonical isomorphism
\begin{eqnarray}\label{eqiso14mai} \mathcal{E}_{\mid Z_i}\cong n_*(\mathcal{O}_{\widehat{Z}_i}(\widehat{Z}_i)).
\end{eqnarray}
Furthermore $\widehat{Z}_i$ is the union of  two disjoint divisors  $\widehat{Z}_{i,1},\,\widehat{Z}_{i,2}$  isomorphic to $A_c$ via  $a: \widehat{X}_c\rightarrow A_c$ and to $Z_i$ via $n$
and, for any ample line bundle $H$ on $X$, we have for some nonzero integer $d$
\begin{eqnarray}\label{eqisopourMi14mai}   dM_i\cong H_{\mid Z_i}-t^*H_{\mid Z_i},
\end{eqnarray}
where $t$ is the translation of $A_c\cong Z_i$ given by the composition
$a_{\mid \widehat{Z}_{i,2}}\circ  \iota\circ (a_{\mid \widehat{Z}_{i,1}})^{-1}$, $\iota$ being  the natural  isomorphism $\widehat{Z}_{i,1}\cong \widehat{Z}_{i,2}$ given by the isomorphisms $$n_{\mid \widehat{Z}_{i,1 }}:\widehat{Z}_{i,1 }\cong Z_i ,\,\,n_{\mid \widehat{Z}_{i,2}}:   \widehat{Z}_{i,2}\cong Z_i.$$
\end{prop}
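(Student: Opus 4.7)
The plan is to split the argument into three stages: first establish the structure of $\widehat{Z}_i$ as two disjoint sections, then obtain the canonical isomorphism $\mathcal{E}|_{Z_i}\cong n_{*}\mathcal{O}_{\widehat{Z}_i}(\widehat{Z}_i)$ from a local computation, and finally translate this into the divisor-class formula for $M_i$ using the $\mathbb{P}^1$-bundle structure of $\widehat{X}_c$. Since the assumption $Z_i\subset Z_c$ combined with ``at most two smooth branches'' forces exactly two smooth transverse branches $Y_1, Y_2$ of $X_c$ along $Z_i$, the normalization $n\colon\widehat{X}_c\to X_c$ has degree~$2$ over $Z_i$. Using that the vertical $\mathbb{C}^{n-1}$-action from Remark~\ref{remageoranksup} is connected and therefore preserves each local branch, one globalizes to get two disjoint components $\widehat{Z}_{i,1}, \widehat{Z}_{i,2}$ with $\nu_j := n|_{\widehat{Z}_{i,j}}\colon\widehat{Z}_{i,j}\xrightarrow{\sim} Z_i$. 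To promote each of these abelian-variety multisections to a section of $a\colon\widehat{X}_c\to A_c$, I would invoke the Hwang--Oguiso $\mathbb{P}^1$-bundle structure $\widehat{X}_c\to\tilde{A}_c$ and observe that a generic $\mathbb{P}^1$-fiber maps to a rational curve in $X_c$ with a single node on $Z_i$, so the multisection degrees $d_1, d_2\ge1$ satisfy $d_1+d_2=2$, forcing $d_1=d_2=1$ and giving isomorphisms $\alpha_j := (a|_{\widehat{Z}_{i,j}})^{-1}\colon A_c\xrightarrow{\sim}\widehat{Z}_{i,j}$.

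Next, to obtain $\mathcal{E}|_{Z_i}\cong n_{*}\mathcal{O}_{\widehat{Z}_i}(\widehat{Z}_i)$, I would work in local coordinates $(x_1,x_2,y_1,\ldots,y_{n-1})$ on $X_C$ near a point of $Z_i$ with $Y_j=\{x_j=0\}$ and $f'=x_1 x_2$, so that $df'=x_2\,dx_1+x_1\,dx_2$ is a section of $\mathcal{E}\subset\Omega_{X_C}$ vanishing to first order on $Z_i$. Its linearization at $Z_i$ is the swap morphism $N_{Z_i/X_C}\to\Omega_{X_C}|_{Z_i}$, $\partial/\partial x_1\mapsto dx_2$, $\partial/\partial x_2\mapsto dx_1$, whose image is the rank-$2$ subbundle $N^{*}_{Z_i/X_C}$. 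Since this image lies inside $\mathcal{E}|_{Z_i}$, comparing ranks gives $\mathcal{E}|_{Z_i}=N^{*}_{Z_i/X_C}$ and an abstract isomorphism $\mathcal{E}|_{Z_i}\cong N_{Z_i/X_C}$. By transversality $Z_i=Y_1\cap Y_2$, one has $N_{Z_i/X_C}\cong N_{Z_i/Y_1}\oplus N_{Z_i/Y_2}$, and each summand matches $N_{\widehat{Z}_{i,j}/\widehat{X}_c}=\mathcal{O}_{\widehat{Z}_{i,j}}(\widehat{Z}_{i,j})$ via $\nu_j$; pushing forward then yields the desired canonical isomorphism. The trivial-determinant compatibility follows from $\mathcal{O}_{X_C}(Y_1+Y_2)|_{X_c}\cong\mathcal{O}_{X_c}$ since $X_c$ is a fiber of $f'$.

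For the divisor formula, I would first match the Lemma~\ref{lepouE}(i) splitting $\mathcal{E}|_{Z_i}=M_i\oplus M_i^{-1}$ to the decomposition found in Stage~2, identifying (up to a swap) $M_i=\nu_{1*}\mathcal{O}_{\widehat{Z}_{i,1}}(\widehat{Z}_{i,1})$. By Stage~1 I can write $\widehat{X}_c=\mathbb{P}(R_1\oplus R_2)$ over $A_c$ with $\widehat{Z}_{i,j}$ the section coming from the projection to $R_j$; the Euler sequence then yields $\mathcal{O}_{\widehat{Z}_{i,j}}(\widehat{Z}_{i,j})\cong R_j\otimes R_{3-j}^{-1}$ on $A_c$. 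Letting $d:=\deg(H|_F)>0$ for $F$ a $\mathbb{P}^1$-fiber, I would decompose $H|_{\widehat{X}_c}\cong a^{*}P\otimes\mathcal{O}_{\widehat{X}_c/A_c}(d)$ so that $H|_{\widehat{Z}_{i,j}}\cong P\otimes R_j^{\otimes d}$ on $A_c$ via $\alpha_j$, and subtracting $j=2$ from $j=1$ gives
\[
d M_i \cong (\nu_1\alpha_1)^{*}H|_{Z_i} - (\nu_2\alpha_2)^{*}H|_{Z_i}\quad\text{on}\ A_c.
\]
Transporting back to $Z_i$ via $\nu_1\alpha_1$ and noting that $(\nu_1\alpha_1)^{-1}(\nu_2\alpha_2)$ is (up to inversion) the translation $t$ of the proposition yields $dM_i\cong H|_{Z_i}-t^{*}H|_{Z_i}$ in $\mathrm{Pic}(Z_i)$, modulo the harmless exchange $M_i\leftrightarrow M_i^{-1}$.

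The main obstacle is the degree-$1$ assertion in Stage~1: it is not a consequence of the local two-smooth-branches hypothesis alone and rests on the global geometry (the Hwang--Oguiso description of $\widehat{X}_c$ plus an intersection count with the singular locus, where the non-torsion assumption on $M_i$ rules out degenerate possibilities). Once this structural input is in place, Stages~2 and~3 reduce to direct bundle and divisor computations that are entirely formal.
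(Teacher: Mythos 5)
Your Stages 2 and 3 are close in substance to what the paper does (the Hessian/normal-crossing computation identifying $\mathcal{E}_{\mid Z_i}$ with the branch-normal directions, and the $\mathbb{P}^1$-bundle-with-two-disjoint-sections computation of the divisor class, which the paper carries out equivalently via $\mathcal{O}_{\widehat{X}_c}(\widehat{Z}_{i,j})=\widehat{H}\otimes a^*M'_j$ and the disjointness relation). But Stage 1, on which everything else depends, has a genuine gap, and it sits exactly where the hypothesis that $M_i$ is non-torsion must enter --- a hypothesis your argument never actually uses. The connectedness of the vertical $\mathbb{C}^{n-1}$-action does not show that $\widehat{Z}_i$ is disconnected: a connected group acts perfectly well (and transitively) on a connected \'etale double cover of $Z_i$, and the monodromy of the two local branches around loops in $Z_i$ can be nontrivial. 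The paper confronts this case head on: if $\widehat{Z}_i$ were irreducible, then $n_*\mathcal{O}_{\widehat{Z}_i}=\mathcal{O}_{Z_i}\oplus\mathcal{O}_{Z_i}(\eta)$ with $\eta$ $2$-torsion, and combining this with (\ref{eqiso14mai}) and the homogeneity (hence topological triviality) of $\mathcal{O}_{\widehat{Z}_i}(\widehat{Z}_i)$ forces $\mathcal{E}_{\mid Z_i}\cong M'_i\oplus M'_i(\eta)$, which compared with $M_i\oplus M_i^{-1}$ makes $M_i$ torsion --- contradiction. Note the order of argument this requires: (\ref{eqiso14mai}) is established \emph{before} the component structure of $\widehat{Z}_i$ is known, whereas your Stage 2 derivation of (\ref{eqiso14mai}) is made to feed off Stage 1. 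Likewise your degree count ($d_1+d_2=2$ because a generic ruling fiber ``has a single node on $Z_i$'') assumes exactly what is to be proved; the paper instead notes that each component of $\widehat{Z}_i$, being \'etale over the abelian variety $Z_i$, cannot be ruled, hence dominates $A_c$, and bounds the total degree of $\widehat{Z}_i$ over $A_c$ by $2$ using the triviality of $K_{X_c}$. You flag the degree-$1$ assertion as ``the main obstacle,'' but the disconnectedness itself is equally unproved in your write-up, and the sketch you give for it is not repairable without invoking $M_i$ non-torsion.

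A secondary point in the same direction: your local model $f'=x_1x_2$ presupposes that the two smooth branches meet transversally along $Z_i$. In the paper this is not assumed but deduced: the Hessian of the fiber equation is a nonzero section of $S^2\mathcal{E}_{\mid Z_i}=M_i^2\oplus\mathcal{O}\oplus M_i^{-2}$, and non-torsionness of $M_i$ kills the outer factors, so the Hessian is the hyperbolic form, which is what gives both transversality and the canonical matching of the two null directions (tangents to the rulings through $x_1,x_2$) with $M_i$ and $M_i^{-1}$. So the non-torsion hypothesis is doing real work in three places --- nondegeneracy of the Hessian, irreducibility exclusion for $\widehat{Z}_i$, and (indirectly) the section property --- and a correct proof has to thread these in the paper's order rather than yours.
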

Note that, from (\ref{eqisopourMi14mai}), we conclude that  the line bundle $M_i$ is  torsion if and only if the translation $t$ has finite order. In most examples, the translation $t$ does not have finite order (see \cite{hwangoguiso} for examples). In fact, the translation $t$ is well understood from the viewpoint of degenerations of abelian varieties. If the family of fibers is the  Jacobian fibration of a family of  curves $\mathcal{C}\rightarrow B$ degenerating generically along the discriminant hypersurface $\Delta$ to irreducible curves $\mathcal{C}_{c}$, $c\in \Delta$   with one node $x$, then the translation used to construct the generalized Jacobian of  $\mathcal{C}_{c}$  starting from a $\mathbb{P}^1$-bundle on $J(\widehat{\mathcal{C}}_{c})$ by glueing two sections is translation by the point ${\rm alb}(x_1-x_2)\in J(\widehat{\mathcal{C}}_{c})$, where $x_1,\,x_2\in \widehat{\mathcal{C}}_{c}$ are the two preimages of $x$ under the normalization map $\widehat{\mathcal{C}}_{c}\rightarrow \mathcal{C}_{c}$  (see \cite{carlson}). Similarly, if  the family of fibers is the  Jacobian fibration of a family of threefolds  $\mathcal{X}\rightarrow B$ with $h^{3,0}=0$ degenerating to threefolds $\mathcal{X}_{c}$ with one ordinary double point $x_0$  and nonzero vanishing cycle, then  the similarly defined  translation is translation by the point $\Phi_{\widetilde{X}_c}(R_1-R_2)\in J^3(\widetilde{X}_c)$, where
$\widetilde{X}_c\rightarrow X_c$ is the blow-up of the singular point,  $\Phi_{\widetilde{X}_c} $ is its Abel-Jacobi map, and the curves $R_i$ are lines in the two different rulings of the exceptional divisor, which is a $2$-dimensional quadric (see \cite{collino}).

\begin{proof}[Proof of Proposition \ref{lepourMi14mai}]  Our assumption is that, locally analytically near $Z_i$, $X_c$ is the union of two smooth divisors in $X_C$ so that $n$ has degree $2$ over $Z_i$. For any point $x\in Z_i$, we denote by $x_1,\,x_2$ its preimages in $\widehat{X}_c$ and $R_{x_1},\,R_{x_2}$ the respective fibers of $a$ through $x_1,\,x_2$.  Let $g$ be a  coordinate on $C$ centered at $c$. Then $f^*g$ vanishes along $X_{c}$, and its differential vanishes along the component $Z_i$ of the singular locus of $X_c$. The Hessian of $f$ along $Z_i$ is a nonzero section of $S^2\mathcal{E}_{\mid Z_i}$ since our assumptions also imply that  $X_c$ has multiplicity $2$ along $Z_i$. If $M_i$ is not a torsion line bundle, the only nonzero section of $S^2\mathcal{E}$ is the section of the factor $M_i\otimes M_i^{-1}\subset S^2\mathcal{E}$, which vanishes along the two factors $M_i,\,M_i^{-1}$ of $\mathcal{E}_{\mid Z_i}^*$.  As we assumed  that $n$ is an immersion at the points $x_1,\,x_2$ above $x\in Z_i$ and we know that the inverse image $\widehat{Z}_i\subset \widehat{X}_c$ of $Z_i$  is \'{e}tale over $A_c$, we get up to a permutation  identifications $T_{R_{x_j},x_j}\cong M_{j}^{\epsilon_j}$ with $j=1,\,2$ and $\epsilon_j=(-1)^j$. This proves (\ref{eqiso14mai}).

As $X_c$ is irreducible, $\widehat{X}_c$ is connected and it is a $\mathbb{P}^1$-bundle over an abelian variety $A_c$ by \cite{hwangoguiso}. The inverse image $\widehat{Z}_i\subset \widehat{X}_c$ is \'{e}tale over $Z_i$, hence is a disjoint union of at most two abelian varieties, all isogenous to $A_c$. If $\widehat{Z}_i$ is irreducible, we have $n_*\mathcal{O}_{\widehat{Z}_i}=\mathcal{O}_{Z_i}\oplus \mathcal{O}_{Z_i}(\eta)$ for some $2$-torsion line bundle $\eta\in {\rm Pic}^0(Z_i) $.  By (\ref {eqiso14mai}), using the fact that the line bundle $\mathcal{O}_{\widehat{Z}_i}(\widehat{Z}_i)$   on $\widehat{Z}_i$ is homogeneous, hence topologically trivial, we get  in this case an isomorphism
$$\mathcal{E}_{\mid Z_i}\cong M'_i\oplus M'_i(\eta)$$
for some line bundle $M'_i\in {\rm Pic}^0(Z_i)$. Comparing with the isomorphism
$$  \mathcal{E}_{\mid Z_i}\cong M_i\oplus M_i^{-1}$$ of (i), we conclude that $M_i$ is  torsion, which is a contradiction. It follows that $\widehat{Z}_i$ has two connected components $\widehat{Z}_{i,1}, \,\widehat{Z}_{i,2}$, each dominating $A_c$ since otherwise they would be ruled. As the canonical bundle of  $X_c$ is trivial, the divisor $\widehat{Z}_i$ has degree at most $2$ over $A_c$ so  each component $\widehat{Z}_{i,1},\,\widehat{Z}_{i,2}$ has degree $1$ over $A_c$, hence is isomorphic to $A_c$.

It remains to prove (\ref{eqisopourMi14mai}). Let $H$ be an ample line bundle on $X_c$. Assume for simplicity that the pull-back $\widehat{H}:=n^*H$ has degree $1$ on the fibers of $a: \widehat{X}_c\rightarrow A_c$. (This can be assumed  in any case if we work with Picard groups with  $\mathbb{Q}$-coefficients.) Then we have equalities in ${\rm Pic}(\widehat{X}_c)$
\begin{eqnarray} \label{eqdu15mai} \mathcal{O}_{\widehat{X}_c}(\widehat{Z}_{i,1})=\widehat{H}\otimes a^*M'_1,\,\,  \mathcal{O}_{\widehat{X}_c}(\widehat{Z}_{i,2})=\widehat{H}\otimes a^*M'_2
\end{eqnarray}
for some line bundles $M'_i$ on $A_c$. We now use the fact that $\widehat{H}$ is the  pull-back of a line bundle under  $n$, so $\widehat{H}_{\mid  \widehat{Z}_{i,2}}$ and $\widehat{H}_{\mid  \widehat{Z}_{i,1}}$ coincide via the isomorphism $\iota$. Denoting $a_1:=a_{\mid  \widehat{Z}_{i,1}},\, a_2:=a_{\mid  \widehat{Z}_{i,2}}$, it follows from the  definition  of $t$ that
\begin{eqnarray} \label{eqdu15mai1} (a_2^{-1})^* \widehat{H}=t^*( (a_1^{-1})^*\widehat{H}).
\end{eqnarray}
We now observe that $\widehat{Z}_{i,1}$ and $\widehat{Z}_{i,2}$ do not intersect, so we get
$$\mathcal{O}_{\widehat{X}_c}(\widehat{Z}_{i,1})_{\mid \widehat{Z}_{i,2}}\cong \mathcal{O}_{\widehat{Z}_{i,2}},$$
hence, using (\ref{eqdu15mai}),  (\ref{eqdu15mai1})
\begin{eqnarray} \label{eqdu15mai1new} (a_2^{-1})^*\widehat{H}\otimes M'_1=\mathcal{O}_{A_c}=t^*((a_1^{-1})^*\widehat{H})\otimes M'_1.
\end{eqnarray}
It follows that
$$(a_1^{-1})^*(n^*M_i)=(a_1^{-1})^*(\mathcal{O}_{\widehat{Z}_{i,1}}(\widehat{Z}_{i,1}))=(a_1^{-1})^*\widehat{H}\otimes M'_1=(a_1^{-1})^*\widehat{H}\otimes t^*((a_1^{-1})^*\widehat{H})^{-1},$$
where the first equality follows from (\ref{eqiso14mai}) and the third from (\ref{eqdu15mai1new}).
This  proves (\ref{eqisopourMi14mai}) since $\widehat{H}=n^*H$.
\end{proof}
\begin{rema} \label{remaquisert} {\rm The proof also shows that the singular locus $Z$ is irreducible in this case. This will be used below.}
\end{rema}
We now give a criterion for the vanishing of the class $c_2(X)^2L^{n-1}$ in ${\rm CH}(X)$.
\begin{prop}\label{pronew14maicrit}  Let $X\rightarrow B$ be a Lagrangian fibered hyper-K\"ahler manifold. Assume
the following  three conditions hold:
\begin{enumerate}
\item\label{eqitem114mai} In codimension $1$ on $B$,  the singularities of the fibers are as in Proposition \ref{lepourMi14mai}, that is,   the  fiber $X_b$ is irreducible, it has at most two local branches at any point, and they are smooth.
\item \label{eqitem214mai} For any  irreducible component $\Delta_j$ of the discriminant hypersurface $\Delta\subset B$,  any desingularization $\widetilde{\Delta}_j$ of $\Delta_j$ satisfies  ${\rm CH}_0(\widetilde{\Delta}_j)=\mathbb{Z}$.
\item \label{eqitem314mai} For any  irreducible component $\Delta_j$ as above, the family of abelian varieties $A_b={\rm Alb}(\widehat{X}_b)$  parameterized by a Zariski open set of $\Delta_j$ satisfies the following nondegeneracy property:

(*)    For a general point  $b\in \Delta_j$, there exists $u\in T_{\Delta_j, b}$ such that the first order variation of Hodge structure
    $$\overline{\nabla}_u: H^{1,0}(A_b)\rightarrow H^{0,1}(A_b)$$
    is an isomorphism.
\end{enumerate}
Then  $c_2(X)^2L^{n-1}=0$ in ${\rm CH}(X)$.
\end{prop}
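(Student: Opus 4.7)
The plan is to use Lemma \ref{proppourc22} together with Lemma \ref{lepouE} to reduce the claim to a vanishing statement about topologically trivial line bundles on a family of abelian varieties over $\widetilde{\Delta}_j$, and then to invoke a Green-Voisin type normal function argument powered by hypotheses (2) and (3). Concretely, Lemma \ref{proppourc22} tells us that $L^{n-1}c_2(X)^2$ is proportional to $j_{Z*}(c_2(\mathcal{E})_{\mid Z})$, where $Z \subset X_C$ is the critical locus of $f'$. Decomposing $Z$ into its connected components $Z_i$ and applying Lemma \ref{lepouE}, each $\mathcal{E}_{\mid Z_i}$ is either of type (ii), for which $c_2(\mathcal{E}_{\mid Z_i}) = 0$, or of type (i), for which $c_2(\mathcal{E}_{\mid Z_i}) = -c_1(M_i)^2$ with $M_i \in {\rm Pic}^0(Z_i)$. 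In the type-(i) case we may further assume $M_i$ is non-torsion (otherwise $c_1(M_i)^2$ already vanishes in ${\rm CH}(Z_i)_\mathbb{Q}$), so Proposition \ref{lepourMi14mai} applies and yields $dM_i \cong H_{\mid Z_i} - t^*H_{\mid Z_i}$ for an ample $H$ on $X$ and a translation $t$ on $Z_i$.

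Next I would globalize this data. Hypothesis (1) ensures that Proposition \ref{lepourMi14mai} applies over a dense open subset of each irreducible component $\Delta_j$ of the discriminant, so (possibly after a degree-$2$ base change of $\widetilde{\Delta}_j$ to remove the $M \leftrightarrow M^{-1}$ ambiguity in Lemma \ref{lepouE}(i)) the $M_{i,b}$ assemble into a smooth family of abelian varieties $\pi: \mathcal{Z} \to S$, with $S$ finite over $\widetilde{\Delta}_j$, equipped with a relative line bundle $\mathcal{M}$ and a fiberwise polarization induced by $\mathcal{H} := H_{\mid \mathcal{Z}}$. By Proposition \ref{lepourMi14mai} and the theorem of the square, up to an isogeny and torsion, $\mathcal{M}$ corresponds via this polarization to the section $b \mapsto -\tau(b) \in A_b$, where $\tau$ is the translation from the equality $dM_{i,b} \cong H_{\mid A_b} - t_b^*H_{\mid A_b}$.

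The core of the argument is to show $j_*(c_1(\mathcal{M})^2) = 0$ in ${\rm CH}(X)_\mathbb{Q}$. Viewing $\mathcal{M}$ as a normal function $\nu: S \to {\rm Pic}^0(\mathcal{Z}/S)$, hypothesis (2) gives ${\rm CH}_0(\widetilde{\Delta}_j) = \mathbb{Z}$ (and hence ${\rm CH}_0(S)_\mathbb{Q} = \mathbb{Q}$), while hypothesis (3) provides a nondegenerate infinitesimal variation of Hodge structure for the family $\mathcal{Z} \to S$. I would invoke a Green-Voisin style infinitesimal invariant argument --- the nondegeneracy (*) being exactly what makes the infinitesimal Abel-Jacobi invariant of $\nu$ injective --- combined with a Bloch-Srinivas decomposition of the diagonal of $\widetilde{\Delta}_j$, to conclude that $\nu$ is torsion. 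Then $c_1(\mathcal{M})$ is torsion modulo pullbacks from $S$, its square has trivial fiberwise restriction, and $j_*(c_1(\mathcal{M})^2) = 0$ in ${\rm CH}(X)_\mathbb{Q}$.

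The main obstacle is this last inference: rigorously linking trivial ${\rm CH}_0$ of the base $\widetilde{\Delta}_j$ with the nondegenerate IVHS condition (*) to force torsion of the normal function $\nu$. The Green-Voisin argument must be adapted to this particular family of abelian varieties, accounting for the polarization structure and the translation section $\tau$ inherited from the degeneration geometry; one also needs to verify that it is genuinely the pushforward to ${\rm CH}(X)$, not the class on $\mathcal{Z}$, that vanishes --- which should make the final step easier by allowing additional rational equivalences coming from $X$ itself.
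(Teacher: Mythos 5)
Your reduction via Lemma \ref{proppourc22}, Lemma \ref{lepouE} and Proposition \ref{lepourMi14mai} matches the paper's setup, but the core of your argument has a genuine gap: hypotheses \ref{eqitem214mai} and \ref{eqitem314mai} do not force the normal function $\nu\colon b\mapsto M_{j,b}$ to be torsion, and in the intended application it is not. By (\ref{eqisopourMi14mai}), $M_i$ is torsion exactly when the gluing translation $t$ has finite order, and as the paper notes (and as happens for the LSV fibration, where $t$ is the Abel--Jacobi image of the difference of the two rulings, cf. \cite{collino}), $t$ generically has infinite order. More basically, triviality of ${\rm CH}_0$ of the base together with a nondegenerate infinitesimal variation of Hodge structure cannot force a section of a family of abelian varieties to be torsion: a non-torsion section of a non-isotrivial elliptic surface over $\mathbb{P}^1$ already contradicts such a statement, and neither a Green--Voisin infinitesimal invariant argument nor a Bloch--Srinivas decomposition of the diagonal of $\widetilde{\Delta}_j$ applies here, since $\nu$ is not the normal function of a family of homologically trivial cycles on a fixed ambient variety whose ${\rm CH}_0$ you control. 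So the step ``conclude that $\nu$ is torsion'' is not merely unproven; it is false in the motivating example, and no argument can rescue it.

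What the nondegeneracy (*) actually buys --- after using the symmetry of the IVHS (cf. \cite{DM}) to rephrase it as surjectivity of $\overline{\nabla}(\lambda)\colon T_{\Delta_j,b}\rightarrow H^{0,1}(A_{j,b})$ for general $\lambda$ --- is, by the theorem of Andr\'e--Corvaja--Zannier \cite{andrecorvajazannier} on the Betti map, only the Euclidean density in $\Delta_j$ of the locus where $\nu(b)$ is torsion. This weaker conclusion suffices, but hypothesis \ref{eqitem214mai} must then be used differently from how you use it: since ${\rm CH}_0(\widetilde{\Delta}_j)=\mathbb{Z}$, the pushforward class $j_{Z_{j,b}*}\bigl(c_2(\mathcal{E}_{\mid Z_{j,b}})\bigr)\in {\rm CH}^{n+3}(X)$ is independent of $b\in\Delta_j$; it vanishes at any $b$ where $M_{j,b}$ is torsion (the case (ii) of Lemma \ref{lepouE} contributing nothing), hence vanishes for every $b$, and summing over the components $\Delta_j$ and the finitely many points of $\Delta_j\cap C$ gives $c_2(X)^2L^{n-1}=0$. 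Your opening reduction and your final bookkeeping are fine; the middle of the argument must be replaced by this density-of-torsion-values statement combined with the constancy in $b$ of the pushforward class, which is exactly the route of the paper (following \cite{voisintorsion}).
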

\begin{proof} The proof follows closely the argument of \cite{voisintorsion}. The family $\mathcal{A}_j =(A_{b})_{ b\in \Delta_j}$, of abelian varieties is an integrable system generically over $\Delta_j$ because the symplectic holomorphic form has generic rank $2n-2$ on the normalization of the restricted family $X_{\Delta_j}$, which is ruled over  $\mathcal{A}_j$, hence induces a  $2$-form on $\mathcal{A}_j$ which is everywhere nondegenerate over  a Zariski open set of $\Delta_j$. This $2$-form makes $\mathcal{A}_j\rightarrow \Delta_j$ a Lagrangian fibration over a Zariski open set of $\Delta_j$.  By symmetry (see \cite{DM})  of the map giving the infinitesimal variation of Hodge structure
$$\overline{\nabla}: T_{\Delta_j,b}\rightarrow {\rm Hom}\,(H^{1,0}(A_{j,b}), H^{0,1}(A_{j,b})),$$  the condition (*) translates into the fact that
for general $b\in \Delta_j$, and general $\lambda\in H^{1,0}(A_{j,b})$, the map
$$\overline{\nabla}(\lambda): T_{\Delta_{j},b}\rightarrow  H^{0,1}(A_{j,b})$$
is surjective.   By the main result of \cite{andrecorvajazannier}, this condition implies that,  for any normal function $\nu$, that is, algebraic section $\nu$ of ${\rm Pic}^0(\mathcal{A}'_j/\Delta'_j)\rightarrow \Delta_j'$ defined over a  generically  finite cover $\Delta'_j$ of $\Delta_j$, the set of points $b\in \Delta_j'$ such that $\nu(b)$ is  torsion in ${\rm Pic}^0(A_{j,b})$ is dense for the Euclidean topology of $\Delta_j$.
We apply this to the normal function defined as follows. Let $\mathcal{Z}_j$ be the singular locus of $X_{\Delta_j}$. If we restrict to an adequate dense Zariski open set of $\Delta_j$, $\mathcal{Z}_j$ is smooth of codimension $2$ in $X$ and its conormal bundle $\mathcal{E}_j$ has one of the forms (i), (ii) of Lemma \ref{lepouE}. In case (ii), we already noticed that  the contribution of the fibers ${Z}_{j,b}$ to $c_2(X)^2L^{n-1}$ is trivial. In case (i), the decomposition
$$\mathcal{E}_{j\mid \mathcal{Z}_{j,b}}=M_{j,b}\oplus M_{j,b}^{-1}$$
defines  a  degree $2$ cover of $\Delta_j$  parameterizing the choice of one of the line bundles $M_{j,b},\,M_{j,b}^{-1}\in {\rm Pic}^0(\mathcal{Z}_{j,b})={\rm Pic}^0(A_{j,b})$.
It follows that  there are points  $b\in \Delta_j$ where the line bundles $M_{j,b}$ and $M_{j,b}^{-1}$ are  torsion. By Lemmas \ref{proppourc22} and  \ref{lepouE}, the cycle
$c_2(\Omega_{X\mid Z_{j,b}})$ is trivial in ${\rm CH}^2(Z_{j,b})$ for any of these points. By condition \ref{eqitem214mai}, the cycle $j_{Z_{j,b}*}(c_2(\Omega_{X\mid Z_{j,b}}))\in {\rm CH}^{n+3}(X)$ does not depend on $b\in \Delta_j$, hence it is trivial for any $b\in \Delta_j$. The cycle
$c_2(\Omega_X)^2L^{n-1}$ is up to a coefficient the sum  of these  cycles over all $j$, and all points $b\in \Delta_j \cap C$, hence it is trivial.
\end{proof}
\begin{proof}[Proof of Theorem \ref{theoLSV}] We just have to check that the assumptions of Proposition \ref{pronew14maicrit} are satisfied in the case of the LSV manifold and its Lagrangian fibration. Let $X\subset \mathbb{P}^{5}$ be  a smooth cubic fourfold. The  base of the associated LSV manifold is the dual 5-dimensional projective space $\mathbb{P}^{5*}$ and  the discriminant hypersurface $\Delta\subset \mathbb{P}^{5*}$ is the dual of the cubic $X\subset   \mathbb{P}^5$. It follows that the LSV fibration  satisfies condition \ref{eqitem214mai}. The condition \ref{eqitem114mai} is also well-known : the  generic behavior  along $\Delta$ of the compactified intermediate Jacobian fibration is the same as the  generic behaviour, along the discriminant divisor, of the family of intermediate Jacobians of all cubic threefolds. This is  studied in \cite{collino}, \cite{DM}. It only remains to check condition \ref{eqitem314mai}. However, since we are considering a Lagrangian family of abelian fourfolds, it is observed in \cite{voisintorsion} that the infinitesimal condition \ref{eqitem314mai}, which says that for generic $b\in \Delta$, there exists a $u\in T_{\Delta,b}$ such that
$$\overline{\nabla}_u: H^{1,0}(A_b)\rightarrow H^{0,1}(A_b)$$ is an isomorphism,
is equivalent to the fact that there does not exist any $u\in T_{\Delta,b},\,u\not=0$, such that
$\overline{\nabla}_u: H^{1,0}(A_b)\rightarrow H^{0,1}(A_b)$ is identically $0$, that is, the moduli map of the family of abelian varieties $(A_{b})_{b\in \Delta}$ has nowhere  maximal rank. This follows from a result due to Lossen  \cite{lossen}  concerning  the projective geometry of homogenenous cubic polynomials  in four variables: it says that if a projective cubic surface is not a cone, then it has a point where its Hessian (or second fundamental form) is a nondegenerate quadratic form. We apply the Lossen result to the cubic form
$$C(u,v,w):=\langle \overline{\nabla}_u(v),w\rangle,$$
with $v\in H^{1,0}(A_b),\,u\in T_{\Delta,b},\,w\in H^{1,0}(A_b)$. The symmetry of $C$ in $u,\,v,\,w$, using the natural isomorphism $T_{\Delta,b}\cong H^{1,0}(A_b)$ given by the Lagrangian fibration structure, is observed in \cite{DM}.

By this argument, if  condition \ref{eqitem314mai} were not satisfied, the family of abelian varieties $(A_{b})_{b\in \Delta}$ would not have maximal modulus, and it is easy to show that this does not happen, at least for very general  $X$. For example, one can  note that, by \cite{CG}, these abelian fourfolds are the Jacobians of complete intersection curves of type $(2,3)$ in $\mathbb{P}^3$, so that their variation of Hodge structure can be described  explicitly.
\end{proof}

\begin{rema} {\rm If instead of a LSV manifold we consider the punctual Hilbert scheme $S^{[g]}$ of a $K3$ surface $S$ with Picard group generated by a line bundle of self-intersection $2g-2$, or rather its birational version admitting a Lagrangian fibration compactifying the Jacobian fibration of the universal family of curves over $|L|$ (see \cite{beauville}), then the proof above does not apply since the condition \ref{eqitem214mai} in Proposition \ref{pronew14maicrit} is not satisfied. However, in this case the discriminant hypersurface is birational to a projective bundle over $S$, hence the relations in the Chow group of a $K3$ surface established  in \cite{beauvoi} (see statements \ref{i} and \ref{ii}  in the introduction) can probably be used to conclude also  in that case. As the result is proved in \cite{mauliknegut}, we do not pursue this argument.}
\end{rema}
\subsection{Riess' argument and  proof of Theorem \ref{theoriess}\label{secriess}}
Theorem \ref{theoriess} follows from Theorem \ref{theonew} by Riess'  arguments as in \cite{riess}.
For completeness, we sketch the proof below.
The main result from Riess' paper that we need is the following.
\begin{theo} \label{theorefriess}  Let $X$ be a projective hyper-K\"{a}hler manifold of dimension $2n$, and let $L\in{\rm NS}(X)$ be an isotropic class. Then there exists a projective hyper-K\"{a}hler manifold $X'$, and a correspondence $\Gamma\in{\rm CH}^{2n}(X\times X')$ such that
\begin{enumerate}
\item \label{iriess} $\Gamma$ induces a graded  ring isomorphism $\Gamma_*: {\rm CH}(X)\rightarrow  {\rm CH}(X')$ which maps $c_i(X)$ to $c_i(X')$.
\item  \label{iiriess} $\Gamma_*(L)=L'$, where  for some nef  line bundle $L'$ on $X'$.
\end{enumerate}
\end{theo}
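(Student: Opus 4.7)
The plan is to construct $X'$ as a projective deformation of $X$ (in the same deformation type) on which the parallel-transport image of $L$ lies in the nef cone, and then realize the Hodge-theoretic parallel transport by an algebraic cycle $\Gamma$ on $X\times X'$. The two ingredients correspond to two quite different pieces of hyper-K\"ahler theory: the first is a statement about the global structure of the K\"ahler cone in a deformation family, while the second is the geometric incarnation, via self-correspondences, of the monodromy action on cohomology.

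For the first step, I would use the global Torelli theorem of Verbitsky together with Markman's description of the monodromy group and of the K\"ahler/birational K\"ahler cone. The isotropic class $l=c_1(L)$ spans a ray of the positive cone $\mathcal{C}_X\subset H^{1,1}(X,\mathbb{R})$. Using twistor deformations and Markman's parallel-transport operators, one produces a smooth projective deformation $X'$ of $X$ and a parallel transport  $\phi:H^2(X,\mathbb{Z})\to H^2(X',\mathbb{Z})$ (an isomorphism of Hodge structures preserving the Beauville--Bogomolov form) such that $\phi(l)\in{\rm NS}(X')$ and such that $\phi(l)$ lies on an extremal isotropic ray of the nef cone of $X'$; equivalently $L':=\phi(L)$ is a nef line bundle on $X'$. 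This step can be carried out without invoking the SYZ conjecture, because one only needs $L'$ to be nef (not Lagrangian).

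For the second step, I would invoke Huybrechts' construction in \cite{huy}: for hyper-K\"ahler manifolds in the same deformation class, any parallel transport operator $\phi$ between $X$ and $X'$ can be lifted to a correspondence $\Gamma\in {\rm CH}^{2n}(X\times X')$ such that $\Gamma_*:{\rm CH}(X)\to {\rm CH}(X')$ is a graded ring isomorphism and $\Gamma_{*,\mathrm{coh}}$ agrees with $\phi$ on $H^2$. Huybrechts builds such $\Gamma$ by composing correspondences coming from: (a) birational transformations (which induce Chow ring isomorphisms by  Rie\ss), (b) Mukai flops and elementary modifications, and (c) specialization/deformation to common degenerations, each of which produces an algebraic cycle implementing the relevant step. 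Because each piece preserves the  Chern character at the topological level, and Chern classes of a hyper-K\"ahler manifold are determined by the underlying $C^\infty$ structure, the resulting correspondence satisfies $\Gamma_*c_i(X)=c_i(X')$. Combined with $\phi(L)=L'$, this gives both conclusions of the theorem.

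The main obstacle is the second step: realizing the Hodge-theoretic parallel transport $\phi$ by an actual algebraic cycle in a way that the induced map on Chow groups is a \emph{ring} homomorphism preserving Chern classes. This is not formal because parallel transport is \emph{a priori} only a topological operation, while the correspondence must be algebraic; controlling both the ring structure and the effect on Chern classes requires the whole machinery of \cite{huy}, in particular the fact that Chern classes of hyper-K\"ahler manifolds are polynomials in the Pontryagin classes (hence topological invariants), together with the compatibility of the various correspondences above with intersection product. A secondary, softer difficulty in the first step is to ensure that the deformation $X'$ is projective: this can be arranged by picking an additional ample class in ${\rm NS}(X)$ and parallel transporting it too, using the density of projective members in the moduli space.
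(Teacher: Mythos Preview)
The paper does not give its own proof of this theorem; it is quoted from Rie\ss\ \cite{riess}, \cite{riess2}, and the paper only adds a short paragraph explaining the nature of $X'$ and $\Gamma$. That explanation, however, differs from your sketch in an essential way, and the difference is exactly where your argument has a gap.

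In the actual construction, $X'$ is not obtained by deforming $X$ along a path in moduli and parallel-transporting $l$: it is a \emph{birational model} of $X$. One uses the chamber decomposition of the (closure of the) movable cone to find a birational hyper-K\"ahler manifold $X'$ on which the transform $L'$ of $L$ is nef. By Huybrechts \cite{huy}, birational hyper-K\"ahler manifolds are non-separated points of the moduli space: there exist small deformations $X_t$ of $X$ and $X'_t$ of $X'$ with $X_t\cong X'_t$ for $t\neq 0$, and $\Gamma$ is the effective cycle obtained as the \emph{limit of the graphs} of these isomorphisms. Property \ref{iriess} holds for graphs of isomorphisms and survives specialization, as Rie\ss\ checked in \cite{riess2}; the preservation of Chern classes then follows since, for hyper-K\"ahler manifolds, they are determined by the Pontryagin classes.

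Your second step is where the real problem lies. Lifting a general parallel-transport operator over a nontrivial path in moduli to an \emph{algebraic} correspondence that induces a graded \emph{ring} isomorphism on Chow groups is not a known result, and it is not what \cite{huy} provides. Huybrechts' construction yields precisely the limit-of-graphs correspondence between \emph{birational} models described above; it does not furnish algebraic correspondences realizing arbitrary monodromy or parallel transport at the level of Chow rings, and your list of building blocks (a)--(c) does not assemble into such a machine. The fix is to reroute step one: replace ``deform $X$ and parallel-transport $l$'' by ``pass to a birational model of $X$ on which $L$ becomes nef'', and then invoke the limit-of-graphs correspondence together with Rie\ss's observation that it respects products and Chern classes.
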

 The variety $X'$ is birational to $X$, hence deformation equivalent to $X$ by \cite{huy}. The cycle $\Gamma$ is effective and is the limit of graphs of isomorphisms $X_t\cong X'_t$ for some small  deformations $X_t$ of $X$, resp. $X'_t$ of $X'$. Such isomorphisms satisfy property \ref{iriess}, hence also $\Gamma$, as was observed by Riess in \cite{riess2}. Being the specialization of graphs of isomorphisms, the correspondence $[\Gamma]^*$ also induces a ring isomorphism on cohomology, so $L'$ is also isotropic since this is equivalent to ${\rm deg}\,{L'}^{2n}=0$ by the Beauville-Fujiki formula, and we have ${\rm deg}\,{L}^{2n}=0$.

Assume now the SYZ conjecture for the hyper-K\"{a}hler manifolds of the same deformation type as $X$. Let $l=c_1(L)$ be an algebraic  isotropic class on  $X$. By Theorem \ref{theorefriess}, there exist $X',\,\Gamma\in{\rm CH}^{2n}(X\times X')$ satisfying properties \ref{iriess} and \ref{iiriess}. As $l':=\Gamma_*l$ is nef on $X'$, and $X'$ satisfies the SYZ conjecture, one has by Theorem \ref{theonew}
$${L'}^{n+1-i}c_{2j}(X')=0\,\,{\rm in}\,\,{\rm CH}(X')\,\,\,{\rm for}\,\,\, j\geq i.$$

As $\Gamma$ is a ring isomorphism preserving Chern classes, it follows that
$${L}^{n+1-i}c_{2j}(X)=0\,\,{\rm in}\,\,{\rm CH}(X),$$
for $j\geq i$,
which proves Theorem \ref{theoriess}.

    \end{document}